\newif\ifarxiv
\arxivtrue          

\ifarxiv
\documentclass[11pt]{article}
\usepackage[T1]{fontenc}
\usepackage{lmodern}
\usepackage[margin=1in]{geometry}
\usepackage[numbers,sort&compress]{natbib}
\else
\documentclass{article}
\usepackage{iclr2027_conference,times}
\fi

\usepackage{amsmath}
\usepackage{xcolor}
\ifarxiv
\definecolor{darkblue}{rgb}{0,0,.5}
\usepackage[
breaklinks=true,
pdfstartview=FitH,
pagebackref=true,
colorlinks=true,
linkcolor=darkblue,
citecolor=darkblue,
urlcolor=darkblue
]{hyperref}
\else
\usepackage{hyperref}
\fi

\usepackage{url}
\usepackage{subcaption}

\ifarxiv
\AtBeginDocument{
	\setlength{\abovedisplayskip}{4pt}
	\setlength{\belowdisplayskip}{4pt}
	\setlength{\abovedisplayshortskip}{2pt}
	\setlength{\belowdisplayshortskip}{2pt}
	\setlength{\jot}{1pt}
}

\allowdisplaybreaks
\fi

\ifarxiv
\usepackage[title]{appendix}
\fi

\title{Improved KKT Complexity for First-Order Bilevel Optimization under Weak Lower-Level Convexity}

\ifarxiv
\author{
	Jan Harold Alcantara%
	\thanks{\url{janharold.alcantara@riken.jp}. 
		Center for Advanced Intelligence Project, RIKEN, Tokyo, Japan.}
	\qquad
	Masahiro Inoue%
	\thanks{\url{inoue-masahiro@g.ecc.u-tokyo.ac.jp}. 
		Department of Mathematical Informatics, 
		Graduate School of Information Science and Technology, 
		University of Tokyo, Tokyo, Japan.}
	\qquad
	Akiko Takeda%
	\thanks{\url{takeda@mist.i.u-tokyo.ac.jp}. 
		Department of Mathematical Informatics, 
		Graduate School of Information Science and Technology, 
		University of Tokyo, Tokyo, Japan, and 
		Center for Advanced Intelligence Project, RIKEN, Tokyo, Japan.}
}
\date{\today}
\else
\author{
	Jan Harold Alcantara\\
	Center for Advanced Intelligence Project\\
	RIKEN\\
	\texttt{janharold.alcantara@riken.jp}\\
	\And
	Masahiro Inoue\\
	Graduate School of Information Science and Technology\\
	The University of Tokyo\\
	\texttt{inoue-masahiro@g.ecc.u-tokyo.ac.jp}\\
	\AND
	Akiko Takeda\\
	Graduate School of Information Science and Technology\\
	The University of Tokyo\\
	Center for Advanced Intelligence Project, RIKEN\\
	\texttt{takeda@mist.i.u-tokyo.ac.jp}
}
\fi

\usepackage{amsthm,amssymb}
\usepackage{xcolor}
\usepackage{mathtools}
\usepackage{algorithm}
\usepackage{algorithmic}
\usepackage{enumitem}
\usepackage{booktabs}
\usepackage{multirow}

\mathtoolsset{showonlyrefs} 

\newtheorem{theorem}{Theorem}
\newtheorem{corollary}[theorem]{Corollary}
\newtheorem{proposition}[theorem]{Proposition}
\newtheorem{lemma}[theorem]{Lemma}

\theoremstyle{definition}
\newtheorem{assumption}[theorem]{Assumption}
\newtheorem{definition}[theorem]{Definition}
\newtheorem{example}[theorem]{Example}

\theoremstyle{remark}
\newtheorem{remark}[theorem]{Remark}

\newcommand{\norm}[1]{\left\|#1\right\|}

\newcommand{\argmin}{\operatorname*{arg\,min}}
\newcommand{\dist}{\operatorname{dist}}

\newcommand{\R}{\mathbb{R}}
\newcommand{\alert}[1]{{#1}}
\begin{document}

\maketitle

\begin{abstract}
	We study deterministic first-order bilevel optimization under weak
	lower-level convexity, allowing nonconvex lower-level objectives and
	without assuming strong convexity, the Polyak--\L{}ojasiewicz condition,
	or an error-bound property. We consider a \alert{$\delta$-relaxed Moreau-gap constraint, with $\delta>0$, for} the lower-level stationarity condition and propose an inexact
	variable-smoothing penalty method (IVSP) for computing its approximate
	Karush--Kuhn--Tucker (KKT) points. \alert{For any fixed relaxation level $\delta$,} under a standard extended
	no-nonzero-abnormal-multiplier constraint qualification (ENNAMCQ), we
	prove finite stabilization of the adaptive penalty parameter and an
	overall $\widetilde O(\varepsilon^{-3})$ first-order complexity for
	computing an $\varepsilon$-KKT point. \alert{Notably, we give verifiable sufficient conditions for ENNAMCQ
		covering convex lower-level objectives without nonconstant affine
		segments (including the strictly convex case), and a class of
		nonconvex sample-reweighting models.}
	\alert{The positive relaxation avoids the intrinsic constraint-qualification
		degeneracy of the exact Moreau-gap constraint while achieving an
		$\mathcal O(\sqrt{\delta})$ lower-level near-stationarity guarantee.}
	Numerical experiments on synthetic and real-world
		bilevel learning problems illustrate the practical performance of IVSP.
\end{abstract}

\section{Introduction}

Bilevel optimization has found broad applications in hyperparameter
optimization~\citep{pedregosa2016hyperparameter,franceschi2018bilevel},
meta-learning~\citep{franceschi2018bilevel,rajeswaran2019meta},
neural architecture search~\citep{darts}, and data reweighting or
hypercleaning~\citep{ren2018learning}.
We study bilevel problems of the form
\begin{equation*}
	\min_{x\in X,\,y\in Y} f(x,y)
	\qquad
	\text{s.t.}\qquad
	y\in 
	\argmin_{u\in Y} g(x,u).
\end{equation*}
In this paper, $X\subset\mathbb R^n$ and $Y\subset\mathbb R^m$ are
nonempty closed convex sets, $f$ and $g$ are smooth, and $g(x,\cdot)$ is uniformly weakly
convex. Thus, the lower-level (LL) problem may be nonconvex and need not be
strongly convex, satisfy the Polyak--\L{}ojasiewicz (PL) condition, or
admit a unique solution.

The modeling flexibility of bilevel optimization comes, however, with a
substantial computational challenge: the upper-level (UL) objective depends
implicitly on a solution of the lower-level problem. Classical hypergradient methods typically differentiate through the
lower-level optimization process or use approximate implicit differentiation,
which can require Hessian/Jacobian-vector products or linear-system solves
\citep{ghadimi2018approximation,grazzi2020iteration,ji2021bilevel}. These costs have motivated a growing line of Hessian-free and
fully first-order methods~\citep{BOME,F2SA,MEHA,f2ba}. Despite this progress,
the strongest nonasymptotic guarantees remain concentrated in
well-conditioned lower-level regimes, such as strong convexity, PL, or
related error-bound conditions~\citep{F2SA,hjfbio,f2ba}.

Much less is known once such lower-level regularity is removed. For
convex, possibly nonsmooth lower-level problems, \citet{lumei} develop first-order penalty methods with approximate
weak-KKT guarantees and a
$\widetilde O(\varepsilon^{-4})$ complexity. For weakly convex/nonconvex lower levels,
MEHA~\citep{MEHA} and PNGBiO~\citep{PNGBiO} develop deterministic
first-order methods based on Moreau envelope penalty reformulations.
Their guarantees concern stationarity of a penalized reformulation
together with lower-level feasibility rather than the full KKT system
of the corresponding constrained problem. Requiring both
$O(\varepsilon)$ stationarity and $O(\varepsilon)$ Moreau-gap
feasibility gives an $O(\varepsilon^{-4})$ complexity. TSP~\citep{lu2025tsp}
instead directly targets approximate KKT points of a Moreau envelope
constrained reformulation using a stochastic primal--dual method.

A common route in the literature is to replace or relax the
lower-level condition by nonlinear constraints, leading to a
nonlinear-programming formulation of the bilevel problem. This
viewpoint is particularly natural when the lower-level solution need
not be unique, as may occur without strong convexity or PL-type
regularity, so that a smooth reduced hyperobjective is generally
unavailable. For such constrained formulations, the KKT conditions
constitute the canonical first-order optimality system of nonlinear
programming, encompassing stationarity, feasibility, and
complementarity; see, e.g., \citet{nocedal2006numerical}. Existing
bilevel methods nevertheless target several different solution
notions, including stationarity of penalized reformulations, weak
KKT-type conditions, and KKT systems of constrained reformulations;
see Table~\ref{tab:complexity-comparison}. These distinctions become particularly important in the weakly convex
lower-level regime, where both the attainable solution guarantee and
its deterministic first-order complexity remain substantially less
understood.

This motivates the following question:
\vspace{-1.2mm}
\begin{center}
	\emph{Can a deterministic first-order method achieve a genuine KKT
		guarantee for bilevel optimization with a weakly convex lower
		level at an improved complexity?}
\end{center}
\vspace{-3mm}
\subsection{Our approach and contributions}

Our approach is to replace the lower-level condition by a 
$\delta$-relaxed Moreau-gap constraint, with $\delta>0$, yielding a smooth nonlinear-programming
relaxation whose KKT system is our target solution concept. \alert{The positive relaxation is essential to this KKT viewpoint:
because the Moreau gap is nonnegative, the exact constraint is
intrinsically degenerate and fails standard constraint qualifications
at every feasible point. A fixed $\delta>0$ avoids this intrinsic degeneracy and permits a
standard constraint-qualification-based KKT analysis, while retaining
an $\mathcal O(\sqrt{\delta})$ lower-level near-stationarity guarantee;
see Section~\ref{sec:moreau-gap} and Appendix~\ref{app:moreau-relaxation}.}

We solve the relaxed problem using a smoothed exact-penalty method with
adaptive penalty updates and inexact Moreau evaluations. Our analysis
is carried out under ENNAMCQ (Definition~\ref{def:ENNAMCQ}), which
yields finite stabilization of the penalty parameter. \alert{We establish verifiable sufficient conditions
	for ENNAMCQ covering convex
	lower-level objectives without nonconstant affine segments (such as strictly and strongly convex cases) and a class of nonconvex
	sample-reweighting models; these conditions apply to our few-shot and
	data hyper-cleaning problems (Appendix~\ref{app:ennamcq-sufficient}).}  For a suitable
proximal parameter, the Moreau subproblems are strongly convex even
when the lower-level objective is nonconvex. Our analysis requires
these subproblems to be solved only to polynomially decreasing
accuracy, so linearly convergent first-order inner solves add only a
logarithmic overhead.

Our main contributions are as follows.
\vspace{-2mm}
\begin{itemize}[leftmargin=*,itemsep=1pt,topsep=3pt]
	
	\item \textbf{A KKT-oriented deterministic first-order method.}
	We develop the \emph{inexact variable-smoothing penalty method}
	(IVSP), which uses only first-order information and projections,
	accommodates inexact Moreau evaluations, and admits either a fixed
	stepsize or an inexact backtracking scheme.
	
	\item \textbf{Improved KKT complexity under weak lower-level
		convexity.}
	\alert{For any fixed relaxation level $\delta>0$,} under ENNAMCQ, the adaptive penalty parameter stabilizes after
	finitely many updates (Lemma~\ref{lem:penalty-stabilization}), and
	IVSP computes an $\varepsilon$-KKT point \alert{of the $\delta$-relaxed Moreau-gap formulation} with an overall
	$\widetilde O(\varepsilon^{-3})$ first-order complexity
	(Corollary~\ref{cor:overall-complexity}). These guarantees do
	not require strong convexity, PL, or an error-bound property of the
	lower-level problem. 
	
	\item \textbf{Empirical evaluation.}
		We compare IVSP with representative deterministic first-order
		bilevel methods on synthetic and real-world learning problems and
		observe competitive numerical performance.	
\end{itemize}
\vspace{-2mm}
\subsection{Related works}

\begin{table}[t]
	\vspace{-2mm}
	\centering
	\caption{{\small Comparison of representative deterministic first-order methods
			for bilevel optimization. \alert{The horizontal division separates methods
				relying on SC/PL lower-level regularity from those allowing convex or
				weakly convex lower-level problems without PL. SC/PL provides stronger
				control of lower-level errors and stability, while SC additionally
				guarantees uniqueness; such properties can be exploited to obtain
				sharper complexity guarantees.} Complexity is reported using a
			norm-based $\varepsilon$-stationarity/KKT criterion whenever applicable.}}
	\label{tab:complexity-comparison}
	\small
	\setlength{\tabcolsep}{3.2pt}
	\begin{tabular}{lcccc}
		\toprule
		Method & LL assumption & Approach & Guarantee & Complexity \\
		\midrule
		BOME~\citep{BOME}
		& PL & barrier & KKT$^\ast$
		& $O(\varepsilon^{-6})$--$O(\varepsilon^{-8})$ \\
		F$^2$SA~\citep{F2SA}
		& SC & penalty & hyperobj.\ stat.
		& $\widetilde O(\varepsilon^{-3})$ \\
		V-PBGD~\citep{VPBGD}
		& PL & penalty & pen. stat.
		& $\widetilde O(\varepsilon^{-2})$ \\
		SLM~\citep{SLM}
		& PL + reg. & primal-dual & KKT (fixed-$\delta$ relax.)
		& $O(\varepsilon^{-2})$ \\
		F$^2$BA~\citep{f2ba}
		& SC & penalty & hyperobj.\ stat.
		& $\widetilde O(\varepsilon^{-2})$ \\
		\midrule
		Lu--Mei~\citep{lumei}
		& convex & penalty/minimax & w-KKT
		& $\widetilde O(\varepsilon^{-4})$ \\
		MEHA~\citep{MEHA}
		& WC & ME penalty
		& pen. stat.\ + feas.
		& $O(\varepsilon^{-4})$ \\
		PNGBiO~\citep{PNGBiO}
		& WC & ME penalty
		& pen. stat.\ + feas.
		& $O(\varepsilon^{-4})$ \\
		\textbf{IVSP (ours)}
		& {WC} & {ME-gap sm.\ pen.}
		& {KKT (fixed-$\delta$ relax.)}
		& $\widetilde O(\varepsilon^{-3})$ \\
		\bottomrule
	\end{tabular}
	\vspace{1mm}
	\begin{minipage}{0.98\linewidth}
		\footnotesize
		SC/WC = strongly/weakly convex; PL = Polyak--\L{}ojasiewicz; ME = Moreau envelope.
		$\widetilde O(\cdot)$ suppresses logarithmic factors.
		The target formulations, solution concepts, and rate conventions differ
		across methods; see Appendix~\ref{app:complexity-comparison} for details. 
	\end{minipage}
	\vspace{-8mm}
\end{table}

Strong convexity, the Polyak--\L{}ojasiewicz (PL) condition, and related
error-bound properties provide quantitative control of lower-level
optimization errors, which facilitates the approximation of lower-level
responses and hypergradients. Under strong convexity, the lower-level solution is unique and classical
approaches exploit implicit or iterative differentiation. More broadly,
under SC/PL-type lower-level regularity, recent methods seek to avoid
second-order computations through penalty, primal--dual,
finite-difference, value-function, and other fully first-order
constructions.
Representative methods include BOME~\citep{BOME}, F$^2$SA~\citep{F2SA},
Prox-F$^2$BA~\citep{proxf2ba}, SLM~\citep{SLM},
HJFBiO~\citep{hjfbio}, F$^2$BA~\citep{f2ba},
PBGD-Free~\citep{pbgdfree}, and SGHA~\citep{sgha}.
In the strongly convex setting, fully first-order methods can attain the
near-optimal $\widetilde O(\varepsilon^{-2})$ complexity for
hyperobjective stationarity~\citep{f2ba}. Our interest is in regimes
where strong convexity, PL, and error-bound conditions are
unavailable.

Within this broader literature, relatively few methods directly target
KKT or KKT-type solution concepts. BOME~\citep{BOME} introduces a
KKT-type residual under the PL condition, although its criterion does
not correspond to the full KKT system of a constrained bilevel
reformulation. \citet{lumei} develop first-order penalty
methods with approximate weak-KKT guarantees for convex, possibly
nonsmooth lower-level problems. SLM~\citep{SLM} establishes KKT
guarantees for a positive value-function-gap formulation under the PL
condition and additional regularity assumptions, while
TSP~\citep{lu2025tsp} directly targets approximate KKT points of a
Moreau envelope constrained formulation using a stochastic
primal--dual method.

Turning specifically to weakly convex lower-level objectives, the
Moreau envelope reformulation introduced by \citet{gao2026moreau} has
led to several first-order methods. MEHA~\citep{MEHA} develops a
single-loop Hessian-free method, while PNGBiO~\citep{PNGBiO} extends
this approach to generalized smoothness; both establish stationarity
guarantees for penalized Moreau envelope reformulations together with
lower-level feasibility. Our method instead combines a positive
Moreau-gap constrained formulation with a smoothed exact penalty,
allows inexact proximal computations, and establishes deterministic
first-order complexity for computing approximate KKT points under weak
lower-level convexity. Since the solution concepts in these works
differ, Table~\ref{tab:complexity-comparison} reports both the
guarantee and its corresponding complexity.

\section{Proposed method}
\label{sec:method}
\subsection{Moreau-gap reformulation}
\label{sec:moreau-gap}

\alert{Let $g(x,\cdot)$ be $\sigma$-weakly convex for every $x\in X$, where
$\sigma\geq0$. That is, $g(x,\cdot) + \frac{\sigma}{2}\norm{\cdot}^2$ is convex for any $x\in X$.} Fix $\rho>0$ such that $\rho\sigma<1$. Define
\[
\psi_\rho(z;x,y)
:=g(x,z)+\frac{1}{2\rho}\|z-y\|^2,
\qquad
g_\rho(x,y)
:=\min_{z\in Y}\psi_\rho(z;x,y),
\]
and let $z_\rho(x,y)$ denote the unique minimizer. Following the
Moreau envelope reformulation of \citet{gao2026moreau}, we consider
\begin{equation}
	\min_{w\in C} f(w)
	\qquad\text{s.t.}\qquad
	R(w):=g(w)-g_\rho(w)-\delta\leq0,
	\label{eq:relaxed-ME}
\end{equation}
where $w=(x,y)$, $C:=X\times Y$, and $\delta>0$. Since $\psi_\rho(\cdot;w)$ is
$(\rho^{-1}-\sigma)$-strongly convex,
\[
g(w)-g_\rho(w)
\geq
\frac{1-\rho\sigma}{2\rho}
\|y-z_\rho(w)\|^2,
\]
and the optimality condition of the Moreau subproblem gives
\[
g(w)-g_\rho(w)=0
\iff
y=z_\rho(w)
\iff
0\in \nabla_y g(w)+N_Y(y),
\]
where $N_Y$ denotes the normal cone to $Y$. Thus, under weak convexity, the exact Moreau-gap constraint
characterizes lower-level first-order stationarity
\citep{MEHA}; when $g(x,\cdot)$ is convex, it is equivalent to
lower-level global optimality and recovers the original bilevel
constraint~\citep{gao2026moreau}.

As in \citep{lu2025tsp}, we use a positive relaxation $\delta>0$, which is important both for regularity
and approximation. Since $g-g_\rho\geq0$, every feasible point of the
exact constraint $g-g_\rho\leq0$ is a minimizer of the gap over $C$;
hence standard constraint qualifications such as MFCQ and NNAMCQ
necessarily fail there
\citep{lin2014simple,ye2023difference,gao2026moreau,bai2026agils}.
On the other hand, every feasible point of \eqref{eq:relaxed-ME}
satisfies
\[
\|y-z_\rho(w)\|
\leq
\sqrt{\frac{2\rho\delta}{1-\rho\sigma}}
~\text{and}~
\dist\!\left(
0,\nabla_y g(x,z_\rho(w))+N_Y(z_\rho(w))
\right)
\leq
\sqrt{\frac{2\delta}{\rho(1-\rho\sigma)}}.
\]
Hence, $\delta$ directly controls lower-level near stationarity at
order $O(\sqrt{\delta})$. Further properties of the positive
relaxation, including its behavior as $\delta\downarrow0$, are given
in Appendix~\ref{app:moreau-relaxation}.

\subsection{Overview of the smoothed exact-penalty method}
\label{sec:overview_smoothedmodel}
Our approach to solving \eqref{eq:relaxed-ME} begins with the exact-penalty
model
\begin{equation*}
	\min_{w\in C}\;
	f(w)+\frac{1}{\gamma}[R(w)]_+,
\end{equation*}
where $[t]_+:=\max\{t,0\}$ and $\gamma>0$ is the inverse penalty
parameter. Thus, decreasing $\gamma$ strengthens the penalty. Under Assumption~\ref{assume:A} and $\rho\sigma<1$, the residual $R$
is continuously differentiable with
\begin{equation}
	\nabla R(w)=
	\begin{pmatrix}
		\nabla_x g(x,y)-\nabla_x g(x,z_\rho(w))\\[1mm]
		\nabla_y g(x,y)-\rho^{-1}(y-z_\rho(w))
	\end{pmatrix};
	\label{eq:grad-R}
\end{equation}
see Appendix~\ref{app:moreau-properties}. The exact-penalty model remains nonsmooth because of $[\cdot]_+$. We
therefore replace it by a smooth approximation $\phi_\mu$ with smoothing
parameter $\mu>0$ and consider
\begin{equation}
	\min_{w\in C}
	M_{\gamma,\mu}(w)
	:=\gamma f(w)+G_\mu(w),
	\qquad
	G_\mu(w):=\phi_\mu(R(w)).
	\label{eq:smoothed-penalty}
\end{equation}
We use a standard family of smooth approximations satisfying the conditions
in Appendix~\ref{app:smoothing-properties}; see \citep{beck2012smoothing}. In particular,
$\phi_\mu$ approximates $[\cdot]_+$ to accuracy $O(\mu)$,
$0\leq\phi_\mu'\leq1$, and $\phi_\mu'$ is $O(\mu^{-1})$-Lipschitz.
A standard example is the softplus
$\phi_\mu(t)=\mu\log(1+\exp(t/\mu))$.

By the chain rule,
$
\nabla G_\mu(w)
=\phi_\mu'(R(w))\nabla R(w),
$
and $\nabla G_\mu$ is $O(\mu^{-1})$-Lipschitz; see
Appendix~\ref{app:smoothing-properties}. Hence, an exact projected-gradient
step for \eqref{eq:smoothed-penalty} takes the familiar form
\begin{equation*}
	w_L^k
	=
	P_C\!\left(
	w^k-\frac{\mu}{L}
	\big[\gamma\nabla f(w^k)+\nabla G_{\mu}(w^k)\big]
	\right),
\end{equation*}
where $P_C$ denotes the Euclidean projection onto $C$.
The difficulty is that computing this step requires the Moreau proximal
point $z_\rho(w^k)$ due to \eqref{eq:grad-R}, which is generally unavailable exactly. This motivates an inexact Moreau evaluation together with explicit
control of the resulting value and gradient errors.

\subsection{Inexact Moreau envelope evaluations}
\label{sec:moreau-oracle}

For a prescribed accuracy $\zeta\geq0$, let $\widetilde z\in Y$ be an
approximation of $z_\rho (w)$ satisfying
\begin{equation}
	0\leq
	\psi_\rho(\widetilde z;x,y)-g_\rho(x,y)
	\leq\zeta.
	\label{eq:oracle-gap}
\end{equation}
Define the corresponding inexact residual by
$
\widetilde R(w)
\coloneqq
g(x,y)-\psi_\rho(\widetilde z;x,y)-\delta.
$
Then
\begin{equation}
	\widetilde R(w)
	\leq
	R(w)
	\leq
	\widetilde R(w)+\zeta.
	\label{eq:residual-error-bound}
\end{equation}

Likewise, let $\widetilde\nabla R(w)$ be obtained from
\eqref{eq:grad-R} by replacing $z_\rho(w)$ with $\widetilde z$, and set
$
\widetilde\nabla G_\mu(w)
\coloneqq
\phi_\mu'(\widetilde R(w))
\widetilde\nabla R(w).
$
Under Assumption~\ref{assume:A},
\begin{equation}
	\bigl\|
	\widetilde\nabla G_\mu(w)-\nabla G_\mu(w)
	\bigr\|
	=
	O\!\left(
	\sqrt{\zeta}+\frac{\zeta}{\mu}
	\right).
	\label{eq:oracle-gradient-error}
\end{equation}
Explicit constants and detailed error estimates are given in
Appendix~\ref{app:oracle-error}. The bounds
\eqref{eq:residual-error-bound}--\eqref{eq:oracle-gradient-error}
are the only properties of the inexact Moreau evaluations needed in
the outer method.

\alert{\textbf{Choice of inner solver.}
For a suitable proximal parameter, each Moreau subproblem is strongly
convex. Standard first-order methods can therefore exploit strong
convexity to obtain linear convergence. In particular, projected
gradient~\citep{Beck17} has this property under our assumptions and is
the method used in our implementation. More generally, the complexity
argument only requires an inner solver that computes a
$\zeta$-accurate solution in $O(\log(1/\zeta))$ iterations uniformly
over the proximal subproblems. A computable residual-based stopping
rule is given in Appendix~\ref{app:complete-algorithm}.}

\subsection{Inexact variable-smoothing penalty method}
\label{sec:algorithm}

We now combine the inexact Moreau evaluations with projected-gradient
steps for~\eqref{eq:smoothed-penalty}. The outer variable-smoothing and adaptive-penalty mechanism of IVSP is
adapted from sESQM~\citep{xu2026smoothing}, which was developed for
single-level nonconvex optimization with nonlinear inequality constraints
and applies one projected-gradient step to a smoothed exact penalty at each
iteration while updating the smoothing and penalty parameters. In our bilevel setting, however,
the Moreau-gap residual and its gradient depend on the generally
unavailable proximal point $z_\rho(w)$. IVSP therefore incorporates
the inexact Moreau evaluations of Section~\ref{sec:moreau-oracle} and
controls the resulting value and gradient errors in the outer method.

Let $\{\mu_k\}$ be a positive sequence with
$\mu_k\downarrow0$, let $\{\zeta_k\}$ be the prescribed proximal
accuracies, and let $\{\widehat\gamma_t\}_{t\geq0}$ be a positive
inverse-penalty grid with $\widehat\gamma_t\downarrow0$. We set
$\gamma_k\coloneqq\widehat\gamma_{t_k}$, where $\{t_k\}$ is
nondecreasing. Given a $\zeta_k$-accurate proximal point $\widetilde z^k$ at $w^k$,
form the corresponding $\widetilde R^k = \widetilde{R}(w^k)$ and
$\widetilde\nabla G^k
\coloneqq\phi_{\mu_k}'(\widetilde R^k)\widetilde\nabla R(w^k)$. For a parameter $L>0$, define
\begin{equation}
	w_L^k
	=
	P_C\!\left(
	w^k-\frac{\mu_k}{L}
	\left[
	\gamma_k\nabla f(w^k)
	+\widetilde\nabla G^k
	\right]
	\right).
	\label{eq:inexact-outer-step}
\end{equation}
The complete method is summarized in Algorithm~\ref{alg:main}.


\textbf{Choice of $L$ and inexact line search.} We consider two strategies for selecting $L$.
In Strategy I (fixed $L$), a sufficiently large constant $L$ is used
throughout; a sufficient choice $L_{\rm safe}$ is given in
Lemma~\ref{lem:descent}. This avoids line search but requires an
a priori estimate of $L_{\rm safe}$. Strategy II instead selects $L$
adaptively by backtracking and does not require $L_{\rm safe}$ to be
known in advance. Specifically, starting from
$L_{k,0}\in[L_{\min},L_{\max}]$, we increase $L$ until an inexact
Armijo condition is satisfied. For each trial $w_L^k$, compute a
$\zeta_{k+1}$-accurate proximal point $\widetilde z_{k,L}^+$ and the
corresponding residual $\widetilde R_{k,L}^+$. We accept the trial if
\begin{align}
	&\gamma_k f(w_L^k)
	+\phi_{\mu_k}
	\bigl(\widetilde R_{k,L}^++\zeta_{k+1}\bigr)
	\leq
	\gamma_k f(w^k)
	+\phi_{\mu_k}(\widetilde R^k)
	-\frac{c_1}{2\mu_k}\|w_L^k-w^k\|^2
	+\tau_{k,L},
	\label{eq:robust-armijo}
\end{align}
where $\tau_{k,L}\geq0$ is an explicit error allowance induced by the
inexact Moreau evaluation, rather than an additional tuning parameter;
its definition is given in
Appendix~\ref{app:lem-descent}. If the
condition fails, we increase $L$ (e.g., $L\leftarrow2L$) and recompute
the trial point and its Moreau evaluation. In practice,
$L_{k,0}$ may be initialized using a safeguarded Barzilai--Borwein
spectral estimate~\citep{barzilai1988two}.

\textbf{Adaptive penalty update.} After obtaining $w^{k+1}$ and
$\widetilde R^{k+1}$, we strengthen the penalty whenever
\begin{equation}
	\widetilde R^{k+1}>2a_\phi\mu_k
	\qquad\text{and}\qquad
	\|w^{k+1}-w^k\|
	\leq c_2\mu_k\gamma_k,
	\label{eq:penalty-update-conditions}
\end{equation}
where $a_\phi>0$ is the smoothing-approximation constant defined in
Appendix~\ref{app:smoothing-properties}. The first condition detects
violation beyond the smoothing scale, which cannot be caused by the
inexact Moreau evaluation since
$\widetilde R^{k+1}\leq R(w^{k+1})$; the second requires a small outer step. Thus, if a significant violation persists
despite a small step, we move to the next value of the inverse-penalty
grid $\{\widehat\gamma_t\}$.

\begin{algorithm}[t]
	\caption{Inexact Variable-Smoothing Penalty Method (IVSP)}
	\label{alg:main}
	\begin{algorithmic}[1]
		\STATE Choose $w^0\in C$, $\{\mu_k\}\downarrow0$,
		$\{\zeta_k\}\downarrow0$, $\{\widehat\gamma_t\}\downarrow0$,
		and set $t_0=0$. Compute a $\zeta_0$-accurate proximal
		solution $\widetilde z^0$ at $w^0$ and form
		$\widetilde R^0$.
		\FOR{$k=0,1,\ldots$}
		\STATE Set $\gamma_k=\widehat\gamma_{t_k}$ and form
		$\widetilde\nabla G^k$ from
		$\widetilde z^k$ and $\widetilde R^k$.
		\STATE Choose $L_k$ by Strategy I (fixed $L$) or
		Strategy II (backtracking via~\eqref{eq:robust-armijo}),
		obtaining $w^{k+1}=w_{L_k}^k$ defined in \eqref{eq:inexact-outer-step} and a
		$\zeta_{k+1}$-accurate proximal solution
		$\widetilde z^{k+1}$ at $w^{k+1}$.
		\STATE Form $\widetilde R^{k+1}$ and set
		$t_{k+1}=t_k+1$ if
		\eqref{eq:penalty-update-conditions} holds; otherwise,
		set $t_{k+1}=t_k$.
		\ENDFOR
	\end{algorithmic}
\end{algorithm}

\section{Theoretical guarantees}
\label{sec:theory}

\subsection{Assumptions and stationarity measures}
\label{sec:assumptions}

We impose the following standard assumptions. 

\begin{assumption}
	\label{assume:A}
	\begin{itshape}
		The sets $X\subset\mathbb R^n$ and $Y\subset\mathbb R^m$ are
		nonempty, compact, and convex. The functions $f$ and $g$ are
		continuously differentiable on neighborhoods of their respective
		domains, and $\nabla f$ and $\nabla g$ are $L_f$- and
		$L_g$-Lipschitz continuous, respectively, on $C=X\times Y$.
	\end{itshape}
\end{assumption}

By compactness,
$f_{\inf}\coloneqq\min_{w\in C}f(w)$ is well defined and
finite. The Lipschitz continuity of $\nabla g$ implies that $g(x,\cdot)$ is weakly convex uniformly over
$x\in X$; see, e.g., \citet[Lemma~2.64]{bauschke2017convex}.
We denote by $\sigma\geq0$ a uniform weak-convexity modulus
(and one may always take $\sigma=L_g$), and fix
$\rho>0$ such that $\rho\sigma<1$. 
%
Since our target is the constrained relaxation
\eqref{eq:relaxed-ME}, we measure convergence through its
KKT residuals; cf.~\citet{lu2025tsp}. This differs from
penalty-stationarity criteria used, for example, in PBGD and
MEHA~\citep{shen2025penalty,VPBGD,MEHA}, whose stationary systems depend
explicitly on a prescribed penalty weight. By contrast, the KKT multiplier is determined jointly with the primal
variables, while stationarity, feasibility, and complementarity are
controlled separately. 

\begin{definition}[Approximate KKT point]
	\label{def:approx-KKT}
	For $w\in C$ and $\lambda\geq0$, define
	\begin{align*}
		\Phi_\rho^{\rm s}(w,\lambda)
		&\coloneqq
		\dist\!\left(
		0,\nabla f(w)+\lambda\nabla R{}(w)+N_C(w)
		\right), \notag\\
		\Phi_\rho^{\rm f}(w)
		&\coloneqq[R{}(w)]_+,
		\qquad
		\Phi_\rho^{\rm c}(w,\lambda)
		\coloneqq|\lambda R{}(w)|.
	\end{align*}
	We call $w$ an
	$(\varepsilon_s,\varepsilon_f,\varepsilon_c)$-KKT point of
	\eqref{eq:relaxed-ME} if there exists $\lambda\geq0$
	such that
	$\Phi_\rho^{\rm s}(w,\lambda)\leq\varepsilon_s$,
	$\Phi_\rho^{\rm f}(w)\leq\varepsilon_f$, and
	$\Phi_\rho^{\rm c}(w,\lambda)\leq\varepsilon_c$.
	When the three tolerances equal $\varepsilon$, we simply call $w$
	an $\varepsilon$-KKT point. The case $\varepsilon=0$ gives the
	standard KKT conditions.
\end{definition}

To control the adaptive penalty parameter, we impose the
\emph{extended no-nonzero-abnormal-multiplier constraint
	qualification} (ENNAMCQ). Conditions of this type are standard in
value-function reformulations of bilevel programs
\citep{lin2014simple,ye2023difference,alcantara2026smoothing}, and
have also been used for relaxed Moreau envelope reformulations;
see, e.g., \citet{gao2026moreau}. ENNAMCQ extends the classical NNAMCQ
used in nonlinear programming~\citep{ye2001multiplier} by imposing
the same nonstationarity condition also at infeasible points. For
the smooth single-constraint problem~\eqref{eq:relaxed-ME}, it also
coincides with the constraint qualification used in
sESQM~\citep{xu2026smoothing}. This condition is the key ingredient in proving finite
stabilization of the penalty parameter, which in turn is essential
for the complexity analysis.

\begin{definition}[ENNAMCQ]
	\label{def:ENNAMCQ}
	We say that ENNAMCQ holds for
	\eqref{eq:relaxed-ME} if, for every $w\in C$ with
	$R{}(w)\geq0$, $0\notin\nabla R{}(w)+N_C(w).$
\end{definition}


By compactness, ENNAMCQ yields a uniform separation from constraint
stationarity over all active or infeasible points; see
Lemma~\ref{lem:ennamcq-margin}. 
We provide verifiable sufficient conditions in
	Appendix~\ref{app:ennamcq-sufficient} under which ENNAMCQ holds
	automatically. For a positive Moreau-gap relaxation,
	\citet{gao2026moreau} established automatic ENNAMCQ under strict
	lower-level convexity in a more general setting with an $x$-dependent
	lower-level feasible set and additional \textit{joint }weak-convexity
	assumptions. In our setting, strict convexity of
	$g(x,\cdot)$ over $Y$ guarantees ENNAMCQ. We further extend
	this sufficient condition beyond strict convexity: for a convex
	lower-level problem, it suffices that every affine segment of
	$g(x,\cdot)$ on $Y$ be constant. We further establish a sufficient condition for nonconvex sample-reweighting models, covering our few-shot and data hyper-cleaning settings.
	
%
\subsection{Basic convergence properties}
\label{sec:basic-convergence}

We first establish two properties underlying the complexity analysis: descent of the proposed step and finite stabilization of the adaptive penalty parameter.
We define the Lyapunov function
\begin{equation*}
	Q_k
	\coloneqq
	\gamma_k\bigl(f(w^k)-f_{\inf}\bigr)
	+G_{\mu_k}(w^k)+c_\phi\mu_k,
	\label{eq:Lyapunov}
\end{equation*}
where $c_\phi$ is the constant in the smoothing bound
\eqref{eq:phi-changing-mu}.

\begin{lemma}[Well-definedness and descent]
	\label{lem:descent}
	Under Assumption~\ref{assume:A}, there exists a constant
	$L_{\rm safe}>0$, independent of $k$, such that the following
	hold.
	\begin{enumerate}[itemsep=1pt]
		\item[(i)]
		Every trial parameter $L\geq L_{\rm safe}$ satisfies the
		inexact Armijo condition~\eqref{eq:robust-armijo}.
		Consequently, any fixed $L\geq L_{\rm safe}$ is admissible
		under Strategy I, while the backtracking procedure in
		Strategy II terminates finitely at every iteration.
		Moreover, under Strategy II, the accepted parameters $L_k$
		and the number of backtracking trials are uniformly bounded.
		
		\item[(ii)]
		Under either strategy, defining $L_k\coloneqq L$ for Strategy I, we have
		\begin{equation}
			Q_{k+1}
			\leq
			Q_k
			-\frac{c_1}{2\mu_k}\norm{d^k}^2
			+\tau_{k,L_k},\quad \forall k\geq 0.\notag 
		\end{equation} 
	\end{enumerate}
\end{lemma}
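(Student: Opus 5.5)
The plan is to treat $M_k(w):=\gamma_k f(w)+G_{\mu_k}(w)$ as an $L$-smooth function with a $\mu_k$-scaled Lipschitz constant. Combining the descent lemma with the variational inequality of the projection then gives an inexact sufficient-decrease inequality. After that, the Lyapunov terms $c_\phi\mu_k$ and $\gamma_k(f-f_{\inf})$ absorb the changes $\mu_k\to\mu_{k+1}$ and $\gamma_k\to\gamma_{k+1}$.

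\textbf{Step 1: uniform smoothness.} On the compact set $C$, I use Assumption~\ref{assume:A}, Lipschitz continuity of $z_\rho$ (Appendix~\ref{app:moreau-properties}) and boundedness of $\nabla R$ and $R$. With the smoothing properties $0\le\phi_\mu'\le1$ and $\phi_\mu'$ being $O(\mu^{-1})$-Lipschitz, this shows that $\nabla M_k$ is Lipschitz with constant $\mu_k^{-1}\bar L$. Here
\[
\bar L:=\gamma_{0}L_f\mu_0+\mu_0 L_R+\|\nabla R\|_\infty^2\,\ell_\phi ,
\]
where $L_R$ is the Lipschitz constant of $\nabla R$ and $\ell_\phi$ is the constant in $\mathrm{Lip}(\phi_\mu')\le\ell_\phi/\mu$. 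This uses $\gamma_k\le\widehat\gamma_0$ and $\mu_k\le\mu_0$, so $\bar L$ is independent of $k$. The descent lemma then gives
\[
M_k(w_L^k)\le M_k(w^k)+\langle\nabla M_k(w^k),d\rangle+\tfrac{\bar L}{2\mu_k}\|d\|^2,\qquad d:=w_L^k-w^k .
\]

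\textbf{Step 2: projection inequality with inexact gradient.} Write $v=\gamma_k\nabla f(w^k)+\widetilde\nabla G^k$. Since $w^k\in C$, the projection characterization gives $\langle v,d\rangle\le-\tfrac{L}{\mu_k}\|d\|^2$. Write $e_k:=\widetilde\nabla G^k-\nabla G_{\mu_k}(w^k)$. Then $\langle\nabla M_k(w^k),d\rangle\le-\tfrac{L}{\mu_k}\|d\|^2+\|e_k\|\|d\|$, and Young's inequality bounds $\|e_k\|\|d\|\le \tfrac{1}{2\mu_k}\|d\|^2+\tfrac{\mu_k}{2}\|e_k\|^2$. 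Next I convert between exact and inexact values. Because $\phi_\mu$ is nondecreasing and $1$-Lipschitz, \eqref{eq:residual-error-bound} gives
\[
\phi_{\mu_k}(\widetilde R^k)\le G_{\mu_k}(w^k)\le\phi_{\mu_k}(\widetilde R^k)+\zeta_k,\qquad \phi_{\mu_k}(\widetilde R^+_{k,L}+\zeta_{k+1})\ge G_{\mu_k}(w_L^k).
\]
Hence the left side of \eqref{eq:robust-armijo} is at least $M_k(w_L^k)-$ (an error of order $\zeta_{k+1}$), and its right side is at least $M_k(w^k)-\zeta_k-\ldots$. Collecting terms gives $M_k(w_L^k)\le M_k(w^k)-\tfrac{2L-\bar L-1}{2\mu_k}\|d\|^2+\tfrac{\mu_k}{2}\|e_k\|^2$. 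I then choose $L_{\rm safe}:=(\bar L+1+c_1)/2$ and define $\tau_{k,L}$ as the collection of $\zeta$-terms and $\tfrac{\mu_k}{2}\|e_k\|^2$, using \eqref{eq:oracle-gradient-error} to bound $\|e_k\|$. Presumably this matches the appendix definition. With these choices \eqref{eq:robust-armijo} holds for every $L\ge L_{\rm safe}$. Doubling from $L_{k,0}\le L_{\max}$ therefore stops by $2\max\{L_{\max},L_{\rm safe}\}$, after at most $\lceil\log_2(L_{\rm safe}/L_{\min})\rceil+1$ trials. This proves (i).

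\textbf{Step 3: Lyapunov decrease.} From the accepted step, $\gamma_k f(w^{k+1})+G_{\mu_k}(w^{k+1})\le \gamma_k f(w^k)+G_{\mu_k}(w^k)-\tfrac{c_1}{2\mu_k}\|d^k\|^2+\tau_{k,L_k}$. Subtracting $\gamma_k f_{\inf}$ from both sides turns this into a statement about $Q$ with the parameters frozen at iteration $k$. I then handle the parameter changes in two parts:
\begin{itemize}
\item Since $\gamma_{k+1}\le\gamma_k$ and $f-f_{\inf}\ge0$, the $f$-term only decreases.
\item The smoothing bound \eqref{eq:phi-changing-mu} gives $\phi_{\mu_{k+1}}(t)\le\phi_{\mu_k}(t)+c_\phi(\mu_k-\mu_{k+1})$, so $G_{\mu_{k+1}}(w^{k+1})+c_\phi\mu_{k+1}\le G_{\mu_k}(w^{k+1})+c_\phi\mu_k$.
\end{itemize}
Together these yield (ii).

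The main obstacle is Step 2. The error allowance $\tau_{k,L}$ must be defined so that it is simultaneously computable, independent of the unknown exact quantities, and large enough to cover both the value errors from the two different accuracies $\zeta_k$ and $\zeta_{k+1}$ and the gradient error. It must also keep $L_{\rm safe}$ independent of $k$ even though $\|e_k\|$ contains a $\zeta_k/\mu_k$ term. I would address this by placing that entire term in $\tau$ rather than in $L$.
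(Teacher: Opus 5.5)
Your overall route is the paper's: a descent lemma for $\gamma_k f+G_{\mu_k}$ with modulus $O(1/\mu_k)$, the projection inequality, Young's inequality on the gradient error, conversion between exact and inexact merit values, and monotonicity in $\gamma_k,\mu_k$ for the Lyapunov function. Step~3 is fine. However, Step~2 as written does not prove (i). To show that the left side of \eqref{eq:robust-armijo} is at most its right side, you need an \emph{upper} bound on the left side. You only record the lower bound $\phi_{\mu_k}(\widetilde R^+_{k,L}+\zeta_{k+1})\ge G_{\mu_k}(w_L^k)$ (``the left side is at least\dots''), and that cannot be chained with your exact decrease inequality. The bound you need is
\[
\phi_{\mu_k}\bigl(\widetilde R^+_{k,L}+\zeta_{k+1}\bigr)\le \phi_{\mu_k}\bigl(\widetilde R^+_{k,L}\bigr)+\zeta_{k+1}\le G_{\mu_k}(w_L^k)+\zeta_{k+1},
\]
which follows from $1$-Lipschitzness and monotonicity of $\phi_{\mu_k}$ together with $\widetilde R^+_{k,L}\le R(w_L^k)$. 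Combine it with $G_{\mu_k}(w^k)\le\phi_{\mu_k}(\widetilde R^k)+\zeta_k$ and your exact decrease; this gives the Armijo test, with $\zeta_k+\zeta_{k+1}$ as the value-error part of $\tau_{k,L}$.

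The inequality you did write, together with $\phi_{\mu_k}(\widetilde R^k)\le G_{\mu_k}(w^k)$, belongs to part (ii). It is what lets you pass from the \emph{accepted} Armijo inequality back to exact merit values. This matters under Strategy~II, where the accepted $L_k$ may be below $L_{\rm safe}$ and your exact-decrease bound is unavailable. So ``from the accepted step'' in Step~3 must go through these two inequalities, not through Step~2.

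Separately, your allowance $\tau_{k,L}=\frac{\mu_k}{2}(E_k^0)^2+\zeta_k+\zeta_{k+1}$ uses a fixed Young weight $1$. Here $E_k^0$ is the explicit bound on $\|e_k\|$; $\|e_k\|$ itself is not computable. With $L_{\rm safe}=(\bar L+1+c_1)/2$ this is a valid allowance, but it is not the paper's. The paper takes
\[
\tau_{k,L}=\frac{\mu_k}{2\vartheta_{k,L}}(E_k^0)^2+\zeta_k+\zeta_{k+1},\qquad \vartheta_{k,L}=\max\Bigl\{\min\bigl\{\mu_kE_k^0/\|d_{k,L}\|,\hat\vartheta L\bigr\},\underline\vartheta\Bigr\},
\]
an adaptive weight that makes Young's inequality nearly tight. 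Your argument adapts directly. Apply Young with weight $\vartheta_{k,L}$, and note that $L\ge\underline\vartheta/\hat\vartheta$ forces $\vartheta_{k,L}\le\hat\vartheta L$. Then $L_{\rm safe}=\max\{\underline\vartheta/\hat\vartheta,\,(c_1+\widehat\gamma_0\mu_0L_f+L_G)/(2-\hat\vartheta)\}$ suffices.
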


An explicit expression for
$L_{\rm safe}$, together with uniform bounds on $L_k$
and the number of backtracking trials, is given in
Appendix~\ref{app:lem-descent}. 

\begin{lemma}[Finite penalty stabilization]
	\label{lem:penalty-stabilization}
	Suppose ENNAMCQ holds for~\eqref{eq:relaxed-ME},
	the proximal accuracies satisfy $\zeta_k=o(\mu_k)$, and the
	inverse-penalty grid satisfies
	$\widehat\gamma_t\leq M_\gamma\widehat\gamma_{t+1}$
	for some $M_\gamma>1$ and every $t\geq0$.
	Then there exists $\underline\gamma>0$ such that
	$\gamma_k\geq\underline\gamma$ for all $k$. Consequently, only
	finitely many penalty updates occur, and $\gamma_k$ is eventually
	constant.
\end{lemma}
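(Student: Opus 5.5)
We argue by contradiction. Suppose $\gamma_k\to0$, so that infinitely many penalty updates occur. Let $\{k_j\}$ be the update indices, i.e., the iterations at which \eqref{eq:penalty-update-conditions} holds. At each such index we have
\[
\widetilde R^{k_j+1}>2a_\phi\mu_{k_j}
\quad\text{and}\quad
\|w^{k_j+1}-w^{k_j}\|\leq c_2\mu_{k_j}\gamma_{k_j}.
\]
By compactness of $C$, we may pass to a subsequence with $w^{k_j}\to\bar w$. Because the step length satisfies $\|d^{k_j}\|\le c_2\mu_{k_j}\gamma_{k_j}\to0$, we also get $w^{k_j+1}\to\bar w$. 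Moreover, $R(w^{k_j+1})\ge\widetilde R^{k_j+1}>0$ by \eqref{eq:residual-error-bound}, so continuity of $R$ gives $R(\bar w)\geq0$. Thus $\bar w$ is exactly the kind of point covered by ENNAMCQ, and the goal is to show $0\in\nabla R(\bar w)+N_C(\bar w)$, which contradicts it.

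To extract this stationarity we use the projection step \eqref{eq:inexact-outer-step}. Write $d^k=w^{k+1}-w^k$. The optimality condition for the projection reads
\[
-\tfrac{L_k}{\mu_k}d^k-\gamma_k\nabla f(w^k)-\widetilde\nabla G^k\in N_C(w^{k+1}).
\]
We then divide by $\phi'_{\mu_k}(\widetilde R^k)$, using that $N_C$ is a cone. Two facts are needed to make this work.
\begin{enumerate}
\item[(a)] $\phi'_{\mu_{k_j}}(\widetilde R^{k_j})$ is bounded away from zero. We compare $\widetilde R^{k_j}$ with $\widetilde R^{k_j+1}$: the step is $O(\mu\gamma)$, $R$ is Lipschitz on $C$, and the oracle error is $\zeta_k=o(\mu_k)$. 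Hence
\[
\widetilde R^{k_j}\ge \widetilde R^{k_j+1}-O(\mu_{k_j}\gamma_{k_j})-\zeta_{k_j}\ge(2a_\phi-o(1))\mu_{k_j}.
\]
We then invoke the smoothing properties from Appendix~\ref{app:smoothing-properties}, which I expect to give $\phi_\mu'(t)\ge$ a constant (e.g.\ $1/2$) once $t\ge a_\phi\mu$. For softplus this holds since $\phi'(t)=1/(1+e^{-t/\mu})$.
\item[(b)] The remaining terms vanish. Since $L_k$ is uniformly bounded by Lemma~\ref{lem:descent}, we have $(L_k/\mu_k)\|d^k\|\le L_kc_2\gamma_k\to0$. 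Also $\gamma_k\nabla f\to0$, and $\widetilde\nabla R(w^k)-\nabla R(w^k)=O(\sqrt{\zeta_k})\to0$ by the Lipschitz continuity of $\nabla g$ together with $\|\widetilde z-z_\rho\|=O(\sqrt\zeta)$ from strong convexity.
\end{enumerate}

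Dividing the inclusion by $\phi'$ therefore gives $-\nabla R(w^{k_j})+e_j\in N_C(w^{k_j+1})$ with $e_j\to0$. The normal cone mapping of a closed convex set has a closed graph, and $\nabla R$ is continuous (Appendix~\ref{app:moreau-properties}). Passing to the limit yields $-\nabla R(\bar w)\in N_C(\bar w)$, which contradicts ENNAMCQ. Hence $\inf_k\gamma_k>0$.

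Since $\widehat\gamma_t\downarrow0$, a positive lower bound on $\gamma_k$ forces $t_k$ to be bounded. So only finitely many updates occur and $\gamma_k$ is eventually constant. The hypothesis $\widehat\gamma_t\le M_\gamma\widehat\gamma_{t+1}$ is not needed for this qualitative claim. Its role is to give an explicit $\underline\gamma$: one would use the uniform margin of Lemma~\ref{lem:ennamcq-margin} to show that an update cannot occur once $\gamma_k$ falls below a threshold $\gamma^\ast$, and then take $\underline\gamma=\gamma^\ast/M_\gamma$, since the grid overshoots the threshold by at most a factor $M_\gamma$. If the quantitative version is required, I would run the argument above directly with that margin in place of the limit.

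The main obstacle is step (a). The update test is evaluated at $w^{k+1}$, while the gradient is formed from $\widetilde R^k$ at $w^k$. Transferring the violation back requires the small-step condition, the $\zeta_k=o(\mu_k)$ accuracy, and the precise lower-bound property of $\phi'_\mu$ beyond $a_\phi\mu$, all of which must be calibrated against the factor $2a_\phi$.
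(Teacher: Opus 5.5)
Your argument is correct, but it takes a different route from the paper's.

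\textbf{The paper's route.} The paper proves a quantitative bound (Lemma~\ref{lem:update-stationarity}) at the \emph{new} point $w^{k+1}$. There the update test itself gives $\phi'_{\mu_k}(\widetilde R_k^+)\ge 1/2$, via Lemma~\ref{lem:phi-products}(i) with $\theta=2$. It pays for replacing $\widetilde\nabla G_k^0$ by $\widetilde\nabla G_k^+$ using the $(L_G/\mu_k)$-Lipschitz continuity of $\nabla G_{\mu_k}$; the small-step test $\|d^k\|\le c_2\mu_k\gamma_k$ cancels the $1/\mu_k$ exactly. This gives $\dist(0,\nabla R(w^{k+1})+N_C(w^{k+1}))\le 2A_0\gamma_k+o(1)$. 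Combined with the uniform margin $\eta_\delta$ of Lemma~\ref{lem:ennamcq-margin}, updates at $k\ge k_0$ can occur only when $\gamma_k\ge\eta_\delta/(4A_0)$. The grid ratio then yields the explicit bound $\underline\gamma=\min\{\widehat\gamma_{k_0},\eta_\delta/(4M_\gamma A_0)\}$.

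\textbf{Your route.} You stay at the \emph{old} point and transfer the violation backward via $\widetilde R^{k}\ge\widetilde R^{k+1}-B_Rc_2\mu_k\gamma_k-\zeta_k$. This needs $\gamma_{k_j}\to0$, so it is tied to the contradiction hypothesis. In exchange you never use the $1/\mu$-smoothness of $G_\mu$ or the $\zeta/\mu$ part of the gradient-error bound, although $\zeta_k=o(\mu_k)$ is still used in your step (a). Your compactness and closed-graph limit is just Lemma~\ref{lem:ennamcq-margin} carried out inline. You are also right that the grid-ratio hypothesis is not needed for the qualitative conclusion.

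\textbf{What each buys.} Your route does not deliver the explicit dependence of $\underline\gamma$ on $\eta_\delta$. The paper uses that dependence to explain the possible deterioration as $\delta\downarrow0$ (Example~\ref{ex:eta-delta-deterioration}).

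\textbf{Two calibration points.}
\begin{enumerate}
\item The axioms give no positive lower bound on $\phi'_\mu$ at $t=a_\phi\mu$. Lemma~\ref{lem:phi-products}(i) only gives $(\theta-1)/\theta$ for $t>\theta a_\phi\mu$. So in (a) you should take, e.g., $\theta=3/2$, which gives $\phi'\ge 1/3$ for large $j$; your bound $(2a_\phi-o(1))\mu_{k_j}$ permits this.
\item In your sketched quantitative version, the threshold $\gamma^\ast$ applies only once the oracle errors are small. So $\underline\gamma$ must also be taken as a minimum with $\widehat\gamma_{k_0}$, as the paper does, rather than simply $\gamma^\ast/M_\gamma$.
\end{enumerate}
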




\subsection{Complexity guarantees}
\label{sec:complexity}

We now establish the convergence rates of the proposed method under
polynomial smoothing and proximal-accuracy schedules. 

\begin{theorem}[Outer-iteration complexity]
	\label{thm:complexity}
Suppose Assumption~\ref{assume:A} and ENNAMCQ hold, and suppose
that the inverse-penalty grid satisfies
$\widehat\gamma_t\leq M_\gamma\widehat\gamma_{t+1}$ for some
$M_\gamma>1$ and every $t\geq0$. Let $\{\mu_k\}$ be positive and
nonincreasing with \alert{$\mu_k=\Theta((1+k)^{-r})$} for some $r\in(0,1)$,
and let $\{\zeta_k\}$ satisfy
$\zeta_k\leq\bar\zeta\mu_k^q$ for some $\bar\zeta>0$ and $q>1/r$.
Let $\{w^k\}$ be generated by Algorithm~\ref{alg:main} and, for each
$k\geq0$, set
$\widetilde\lambda_{k+1}
\coloneqq
\gamma_k^{-1}\phi_{\mu_k}'(\widetilde R^{k+1})$.
	Then, for every sufficiently large $K$, there exists
	$\widehat k\in[\lceil K/2\rceil,K]$ such that
\begin{equation}
	\begin{aligned}
		\Phi_\rho^{\rm s}(w^{\widehat k+1},\widetilde\lambda_{\widehat k+1})
		&=O\!\left(K^{-(1-r)/2}\right),\qquad
		\Phi_\rho^{\rm f}(w^{\widehat k+1})=O(K^{-r}),\\
		\Phi_\rho^{\rm c}(w^{\widehat k+1},\widetilde\lambda_{\widehat k+1})
		&=O(K^{-r}).
	\end{aligned}
	\label{eq:main-rates}
\end{equation}
	Consequently, within
		$
		K
		=
		O\!\left(
		\max\left\{
		\varepsilon_s^{-2/(1-r)},
		\varepsilon_f^{-1/r},
		\varepsilon_c^{-1/r}
		\right\}
		\right)
	$
	outer iterations, we obtain an
	$(\varepsilon_s,\varepsilon_f,\varepsilon_c)$-KKT point of the $\delta$-relaxed Moreau-gap formulation~\eqref{eq:relaxed-ME}. 
\end{theorem}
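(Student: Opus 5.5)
By Lemma~\ref{lem:penalty-stabilization}, whose hypotheses hold because $\zeta_k\le\bar\zeta\mu_k^q$ with $q>1/r>1$ gives $\zeta_k=o(\mu_k)$, there is an index $k_0$ after which $\gamma_k\equiv\bar\gamma\ge\underline\gamma>0$. From then on $Q_k$ is a Lyapunov function for a fixed penalty weight. I would sum the descent inequality of Lemma~\ref{lem:descent}(ii) over $k\in[\lceil K/2\rceil,K]$, with $d^k=w^{k+1}-w^k$, to obtain
\[
\sum_{k=\lceil K/2\rceil}^{K}\frac{c_1}{2\mu_k}\|d^k\|^2\le Q_{\lceil K/2\rceil}+\sum_k\tau_{k,L_k}.
\]
The terms on the right are controlled as follows.
\begin{itemize}
\item $Q_k$ is uniformly bounded, by compactness of $C$, the bound $G_\mu\le[R]_++O(\mu)$, and $\gamma_k\le\widehat\gamma_0$.
\item The error allowances $\tau_{k,L}$ (Appendix) are of order $\zeta_k/\mu_k+\sqrt{\zeta_k}$. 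Since $q>1/r$, these are $o(k^{-1})$ up to a summable factor, so $\sum_k\tau_{k,L_k}=O(1)$.
\end{itemize}
Because $\mu_k=\Theta(K^{-r})$ on this window, averaging gives an index $\widehat k$ with
\[
\|d^{\widehat k}\|^2/\mu_{\widehat k}^2=O\bigl(\mu_{\widehat k}^{-1}K^{-1}\bigr)=O\bigl(K^{r-1}\bigr),
\]
that is, $\|d^{\widehat k}\|/\mu_{\widehat k}=O(K^{-(1-r)/2})$.

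\textbf{Stationarity.} I would use the optimality condition of the projection step~\eqref{eq:inexact-outer-step}:
\[
-\tfrac{L_k}{\mu_k}d^k-\gamma_k\nabla f(w^k)-\widetilde\nabla G^k\in N_C(w^{k+1}).
\]
Dividing by $\gamma_k$ and rewriting everything at $w^{k+1}$ produces several error terms, each to be bounded:
\begin{itemize}
\item $\nabla f(w^k)-\nabla f(w^{k+1})$, which is $O(\|d^k\|)$;
\item $\widetilde\nabla R(w^k)-\nabla R(w^{k+1})$, which is $O(\|d^k\|+\sqrt{\zeta_k})$ by Lipschitzness of $\nabla R$ and~\eqref{eq:oracle-gradient-error};
\item the multiplier mismatch $\phi'_{\mu_k}(\widetilde R^k)-\phi'_{\mu_k}(\widetilde R^{k+1})$. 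Since $\phi'_\mu$ is $O(\mu^{-1})$-Lipschitz and $R$ is Lipschitz on $C$, this is $O\bigl((\|d^k\|+\zeta_k+\zeta_{k+1})/\mu_k\bigr)$, and it is multiplied by the bounded $\|\nabla R\|$.
\end{itemize}
With $L_k$ uniformly bounded and $\gamma_k\ge\underline\gamma$, this yields
\[
\Phi^{\rm s}_\rho(w^{\widehat k+1},\widetilde\lambda_{\widehat k+1})=O\bigl(\|d^{\widehat k}\|/\mu_{\widehat k}+\zeta_{\widehat k}/\mu_{\widehat k}+\sqrt{\zeta_{\widehat k}}\bigr)=O\bigl(K^{-(1-r)/2}\bigr).
\]
The $\zeta$-terms are of lower order because $q>1/r$.

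\textbf{Feasibility and complementarity.} Eventual stabilization means the update condition~\eqref{eq:penalty-update-conditions} fails for all large $k$. At the chosen $\widehat k$ we have $\|d^{\widehat k}\|\le c_2\mu_{\widehat k}\gamma_{\widehat k}$ for $K$ large, since $\|d^{\widehat k}\|/\mu_{\widehat k}\to0$. The failure of the update condition therefore forces $\widetilde R^{\widehat k+1}\le 2a_\phi\mu_{\widehat k}$, and hence
\[
R(w^{\widehat k+1})\le 2a_\phi\mu_{\widehat k}+\zeta_{\widehat k+1}=O(K^{-r}).
\]
For complementarity, $\widetilde\lambda\le\underline\gamma^{-1}$ is bounded, which handles the case $R\ge0$. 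When $R(w^{\widehat k+1})<0$, I would use the smoothing property that $\phi'_\mu(t)$ decays for $t\le -a\mu$. Concretely, $|t|\,\phi'_\mu(t)=O(\mu)$ for $t<0$; for softplus this is the bound $|t|/(1+e^{|t|/\mu})\le\mu$. Applying this at $t=\widetilde R$, and using $|R-\widetilde R|\le\zeta$, gives
\[
|\widetilde\lambda R|\le\gamma^{-1}\bigl(O(\mu_{\widehat k})+\zeta_{\widehat k+1}\bigr)=O(K^{-r}).
\]
I expect this step to require checking that the appendix smoothing conditions include such a decay bound; otherwise it has to be derived from convexity of $\phi_\mu$ together with $\phi_\mu(t)\le O(\mu)$ for $t\le0$, via $\phi'_\mu(t)|t|\le\phi_\mu(0)-\phi_\mu(t)$.

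\textbf{Main obstacle and conclusion.} The main obstacle is the stationarity step: making the $1/\mu_k$-amplified errors (multiplier mismatch and oracle errors) provably of order $\|d^k\|/\mu_k$ plus lower-order terms, and making sure the summed $\tau$ allowances stay bounded. Both depend on the choice $q>1/r$. Finally, the iteration bound follows by inverting the three rates in~\eqref{eq:main-rates}.
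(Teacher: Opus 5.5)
Your architecture matches the paper's proof. You use finite penalty stabilization, then sum the descent inequality of Lemma~\ref{lem:descent}(ii) over $[\lceil K/2\rceil,K]$ to get an index with $\|d^{\widehat k}\|/\mu_{\widehat k}=O(K^{-(1-r)/2})$. You read stationarity off the projection optimality condition; your direct splitting of $\phi'_{\mu_k}(\widetilde R^{k})\widetilde\nabla R(w^k)-\phi'_{\mu_k}(\widetilde R^{k+1})\nabla R(w^{k+1})$ is equivalent to the paper's use of the $(L_G/\mu)$-Lipschitz continuity of $\nabla G_\mu$ plus the oracle errors $E_k^0+E_k^+$. Finally, you get feasibility and complementarity from the failed update test and the bound $\phi'_\mu(t)|t|\le a_\phi\mu$ for $t<0$, which is Lemma~\ref{lem:phi-products}(ii).

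\textbf{The gap: summability of the error allowances.} You claim $\tau_{k,L}=O(\zeta_k/\mu_k+\sqrt{\zeta_k})$ and deduce $\sum_k\tau_{k,L_k}<\infty$ from $q>1/r$. That quantity is the order of the gradient error $E_k^0$ itself, and it is generally \emph{not} summable under the hypothesis. With $\zeta_k=\bar\zeta\mu_k^q$ one gets $\zeta_k/\mu_k\sim k^{-r(q-1)}$ and $\sqrt{\zeta_k}\sim k^{-rq/2}$. Summability of these would need $q>1+1/r$ and $q>2/r$, which is strictly stronger than $q>1/r$. For the paper's own choice $r=1/3$, $q=3.01$, the exponents are $0.67$ and about $0.50$, so both series diverge.

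\textbf{The fix.} The argument is rescued by the actual form $\tau_{k,L}=\frac{\mu_k}{2\vartheta_{k,L}}(E_k^0)^2+\zeta_k+\zeta_{k+1}$ with $\vartheta_{k,L}\ge\underline\vartheta$. The gradient error enters only squared and multiplied by $\mu_k$, because it comes from Young's inequality with weight $\vartheta_{k,L}/\mu_k$. This gives $\tau_k=O(\mu_k^{q+1}+\mu_k^{2q-1}+\mu_k^{q})$, which is summable exactly because $rq>1$ (note $r(2q-1)>2-r>1$). With this correction, and using $Q_{K+1}\ge -a_1\mu_0$ rather than $Q_{K+1}\ge 0$ when you drop it, the rest of your argument goes through.

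\textbf{A smaller difference.} You make every window index a non-update iteration by taking $K$ beyond twice the last update time. That proves the statement as phrased, but the last update time depends on the trajectory and is not bounded a priori. As a result, the threshold for ``sufficiently large $K$'' and the constant in the iteration count are not explicit. The paper instead averages only over the non-update indices in the window and uses that at most $t_*$ updates occur, replacing $S_K$ by $\widetilde S_K=S_K-t_*\mu_m$. Its threshold therefore depends only on problem constants ($t_*$, $\underline\gamma$, $M_2$).
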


The rates in~\eqref{eq:main-rates} show that smaller $r$ favors stationarity, whereas larger $r$ favors feasibility and complementarity. Balancing $(1-r)/2=r$ gives $r=1/3$. Thus, for any $q>3$, Algorithm~\ref{alg:main} reaches an $\varepsilon$-KKT iterate within $O(\varepsilon^{-3})$ outer iterations. Accounting additionally for the logarithmic cost of the inexact Moreau
evaluations yields the
following overall first-order complexity.

\begin{corollary}[Overall first-order complexity]
	\label{cor:overall-complexity}
	Suppose the conditions of Theorem~\ref{thm:complexity} hold and,
	in addition, choose $\zeta_k=\Theta(\mu_k^q)$. Suppose each
	required Moreau subproblem is solved by projected gradient to
	objective-gap accuracy $\zeta_k$. Let $B_{\rm ls}$ denote a uniform bound on the
	number of trial Moreau evaluations per outer iteration, with
	$B_{\rm ls}=1$ for Strategy I. Then the overall
	first-order/projection complexity of IVSP through outer iteration
	$K$ is $
		O\!\left(
		B_{\rm ls}\,
		K\log K
		\right).$
	In particular, for the balanced choice $r=1/3$, IVSP computes an
	$\varepsilon$-KKT point of the $\delta$-relaxed Moreau-gap formulation~\eqref{eq:relaxed-ME} with overall first-order/projection
	complexity
	$
		O\!\left(
		B_{\rm ls}\,
		\varepsilon^{-3}
		\log\frac{1}{\varepsilon}
		\right)
		=
		\widetilde O(\varepsilon^{-3}).$
\end{corollary}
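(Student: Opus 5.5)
The proof of Corollary~\ref{cor:overall-complexity} is a counting argument built on Theorem~\ref{thm:complexity} and Lemma~\ref{lem:descent}. Per outer iteration $k$, the cost has three parts: the projected step \eqref{eq:inexact-outer-step}, which needs one evaluation of $\nabla f$, of $\nabla g$ at $(x^k,y^k)$ and at $(x^k,\widetilde z^k)$, and one projection $P_C$; at most $B_{\rm ls}$ trial points, each needing a Moreau evaluation to accuracy $\zeta_{k+1}$ plus $O(1)$ function values for the Armijo test; and, for Strategy I, exactly one such evaluation. By Lemma~\ref{lem:descent}(i), the number of backtracking trials is uniformly bounded, so $B_{\rm ls}$ is a finite constant independent of $k$. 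It therefore suffices to show that one Moreau evaluation to accuracy $\zeta_{k+1}$ costs $O(\log(1/\zeta_{k+1}))=O(\log k)$ gradient/projection steps, uniformly in the point $w$ at which it is computed.

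For the inner cost, I would use that $\psi_\rho(\cdot;x,y)$ is $(\rho^{-1}-\sigma)$-strongly convex on $Y$ and has $(L_g+\rho^{-1})$-Lipschitz gradient. Both constants are independent of $w$. Projected gradient with stepsize $1/(L_g+\rho^{-1})$ then contracts the objective gap linearly, $\psi_\rho(z^j)-g_\rho\le(1-\kappa^{-1})^j(\psi_\rho(z^0)-g_\rho)$, where $\kappa=(L_g+\rho^{-1})/(\rho^{-1}-\sigma)$; see \citep{Beck17}. By compactness of $C$ and continuity of $g$, the initial gap is bounded by a uniform constant $D$, for instance $\max_C g-\min_C g+\operatorname{diam}(Y)^2/(2\rho)$ for any start $z^0\in Y$. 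Hence $j\ge\kappa\log(D/\zeta)$ iterations suffice. The certificate that the gap is at most $\zeta$ can be taken from the a priori iteration count, or from the residual stopping rule of Appendix~\ref{app:complete-algorithm}; either way the count is $O(\log(1/\zeta))$.

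With $\zeta_k=\Theta(\mu_k^q)=\Theta((1+k)^{-rq})$, we get $\log(1/\zeta_{k})=O(rq\log(1+k))=O(\log K)$ for all $k\le K+1$. Summing over $k=0,\dots,K$ gives a total of $O(B_{\rm ls}K\log K)$, which proves the first claim. For the second claim, take $r=1/3$ and any $q>3$. Theorem~\ref{thm:complexity} yields an $\varepsilon$-KKT point among $w^{\widehat k+1}$ with $K=O(\max\{\varepsilon^{-2/(2/3)},\varepsilon^{-3}\})=O(\varepsilon^{-3})$. Substituting gives $O(B_{\rm ls}\varepsilon^{-3}\log(\varepsilon^{-3}))=O(B_{\rm ls}\varepsilon^{-3}\log(1/\varepsilon))$. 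The Theorem only says some $\widehat k\in[\lceil K/2\rceil,K]$ works, so identifying it requires evaluating the residuals along the iterates. Those residuals are computed from the same inexact quantities, so this adds only constant overhead per iteration, or one can simply run all $K$ iterations.

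The main obstacle is uniformity: the linear rate and the initial gap must not depend on $w^k$, and $B_{\rm ls}$ must be a genuine constant. Both follow from compactness together with Lemma~\ref{lem:descent}(i). A minor point is to check that the ``sufficiently large $K$'' condition in Theorem~\ref{thm:complexity} only affects constants.
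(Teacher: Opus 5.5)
Your proposal is correct and uses essentially the same counting argument as the paper. Each Moreau solve costs $O(\log(1/\zeta_{k+1}))=O(\log(k+2))$ projected-gradient steps, uniformly over $w$ by compactness, and each outer iteration needs at most $B_{\rm ls}$ new solves, since the accepted evaluation is reused at the next iterate. Summing gives $O(B_{\rm ls}K\log K)$, and substituting $K=O(\varepsilon^{-3})$ at $r=1/3$ gives the second claim. The only difference is minor: you obtain the inner count from linear decay of the objective gap with a uniform initial-gap bound, whereas the paper derives R-linear decay of the projected-gradient residual from iterate contraction and stops via the computable residual rule; both yield the same uniform $O(\kappa_{\psi,\rho}\log(1/\zeta))$ bound.
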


\section{Experiments}
\label{sec:experiments}

In this section, we evaluate IVSP on a synthetic nonconvex problem and two
real-world bilevel learning tasks: data
hyper-cleaning and few-shot learning.
We compare IVSP with representative deterministic first-order bilevel methods,
including PNGBiO~\citep{PNGBiO}, MEHA~\citep{MEHA}, SLM~\citep{SLM},
F$^2$SA~\citep{F2SA}, V-PBGD~\citep{VPBGD}, and BOME~\citep{BOME}.
For IVSP, we use the blockwise smoothing schedule
$\mu_k=\mu_0\bigl(1+\lfloor k/B\rfloor\bigr)^{-r}$,
which satisfies the smoothing condition of
Theorem~\ref{thm:complexity}.
Detailed problem formulations, implementation settings, and
algorithmic hyperparameter values, including $\mu_0$, $B$, and $r$ are provided in Appendix~\ref{app:experimental-details}.

\subsection{Synthetic numerical experiment}
\label{sec:synthetic-nonconvex}

We first consider the smooth nonconvex toy problem used in \citep{MEHA}:
\begin{equation}
    \min_{\substack{x\in[-100,100]\\
                    y\in[-100,100]^n}}
    \ (x-a)^2 + \|y-ae-c\|^2 
    \quad\text{s.t.}\quad
    y\in\argmin_{z\in[-100,100]^n}
    \sum_{i=1}^n \sin(x+z_i-c_i),
\notag 
\end{equation}
where $e$ is the all-ones vector.
The LL objective is smooth, nonconvex, and
$1$-weakly convex.
We set $n=1000$, $a=2$, $c_i=2$, $x^0=-6$, and $y^0=0$.
The optimal solution is
$x^*=((1-n)a+nC)/(1+n)$ and $y_i^*=C+c_i-x^*$,
where $C \in \argmin_{c\in\{-\pi/2+2k\pi:\,k\in\mathbb Z\}} |c-2a|.$

Figure~\ref{fig:synthetic-nonconvex} reports the normalized errors in
$x$ and $y$ and the KKT residual
$\max\{\Phi_\rho^{\rm s},\Phi_\rho^{\rm f},\Phi_\rho^{\rm c}\}$ for IVSP.
Since the competing methods target different reformulations and solution concepts, we report the IVSP KKT measure only for IVSP. IVSP achieves the
fastest reduction and smallest final errors in both variables.

\begin{figure}[h]
	\centering
	\includegraphics[width=.8\linewidth]{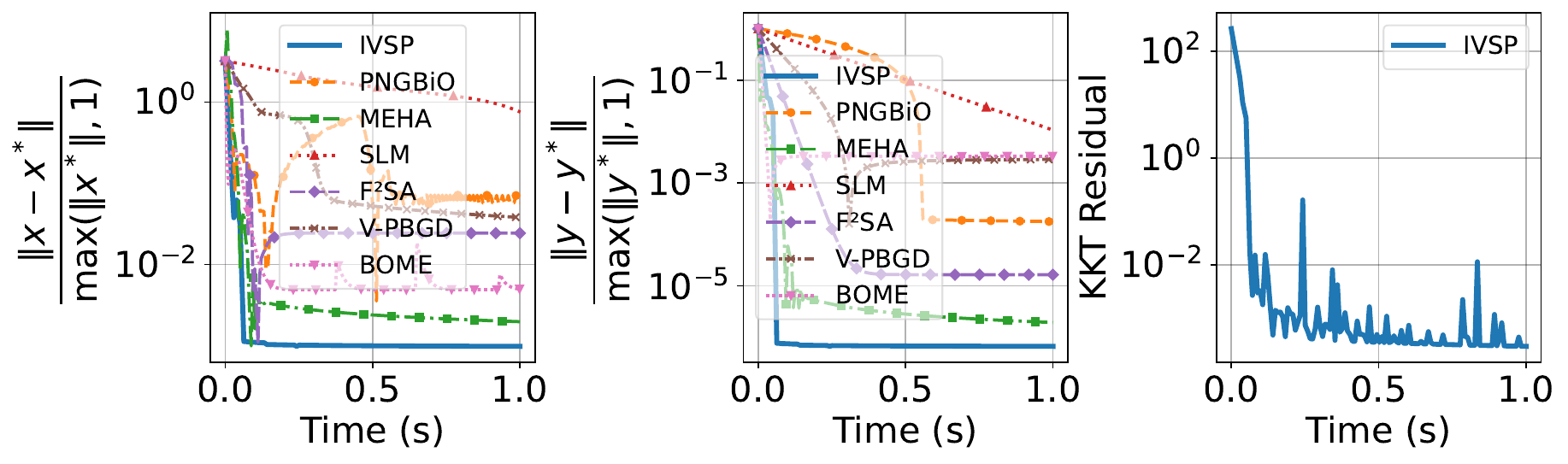}
	\caption{Convergence comparison on the  nonconvex problem with $n=1000$ measured by $\|x-x^*\|/\max\{\|x^*\|,1\}$, $\|y-y^*\|/\max\{\|y^*\|,1\}$,
		and the KKT residual
		$\max\{\Phi_\rho^{\mathrm s},\Phi_\rho^{\mathrm f},\Phi_\rho^{\mathrm c}\}$.}
	\label{fig:synthetic-nonconvex}
	\vspace{-3mm}
\end{figure}

\subsection{Real-world applications}
\label{sec:real-world}


\begin{figure}[t]
    \centering
    \begin{subfigure}[t]{\textwidth}
        \centering
        \includegraphics[width=.8\linewidth]{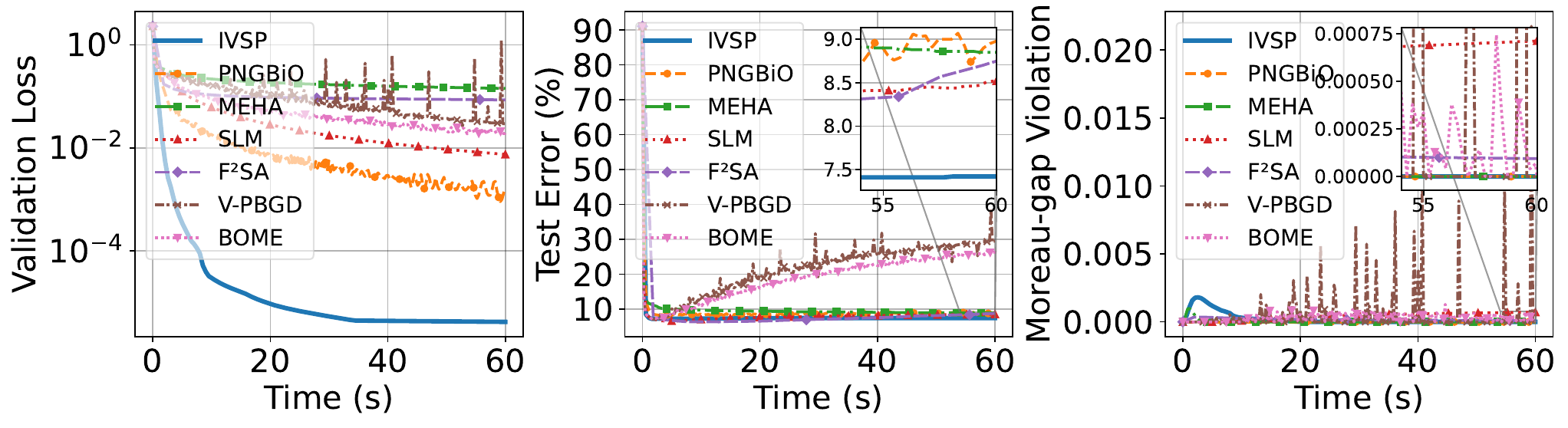}
        \caption{Data hyper-cleaning: MNIST}
        \label{fig:hypercleaning-mnist}
    \end{subfigure}

    \vspace{0.5em}

    \begin{subfigure}[t]{\textwidth}
        \centering
        \includegraphics[width=.8\linewidth]{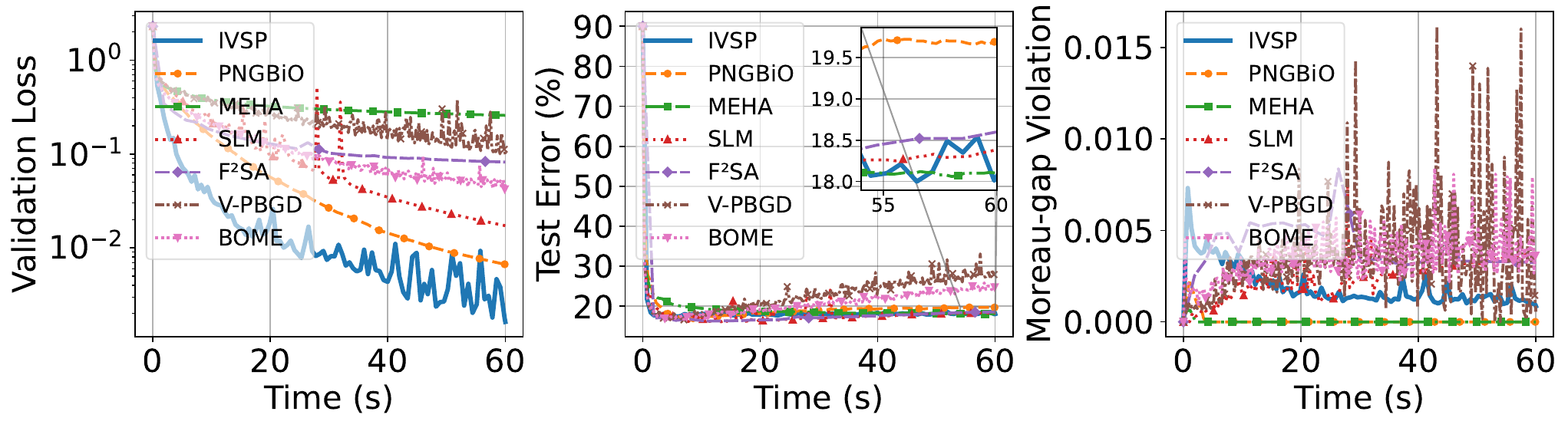}
        \caption{Data hyper-cleaning: FashionMNIST}
        \label{fig:hypercleaning-fashion-mnist}
    \end{subfigure}
    \caption{Comparison results for data hyper-cleaning on MNIST and FashionMNIST. The third panels report the estimated Moreau-gap
constraint violation.}
    \label{fig:hypercleaning}
    \vspace{-3mm}
\end{figure}

\noindent \textbf{Data hyper-cleaning.}
We consider data hyper-cleaning~\citep{SLM, MEHA, PNGBiO} on
MNIST~\citep{MNIST} and FashionMNIST~\citep{FashionMNIST}, where the
UL variables assign weights to the training samples, half of which have corrupted labels, and the LL model is a two-layer fully connected network with sigmoid activation,
yielding a smooth nonconvex LL problem.
\alert{This problem falls within the sigmoid-reweighted classification model
of Corollary~\ref{cor:ennamcq-hypercleaning}, so ENNAMCQ holds for our
choice $\delta=10^{-3}$.}

As shown in Figure~\ref{fig:hypercleaning}, IVSP reduces the validation loss more rapidly than the
compared methods and attains the lowest validation loss and lowest test error on both datasets.
The estimated Moreau-gap constraint violation, which estimates $[R(w)]_+$, is driven essentially
to zero on MNIST and remains small on FashionMNIST, although PNGBiO
and MEHA reach zero violation there, consistent with their
unrelaxed Moreau-gap feasibility target.

\noindent \textbf{Few-shot learning.}
We consider 10-way 1-shot classification on Omniglot~\citep{Omniglot}
following the bilevel meta-learning setting of \citet{MEHA}, with the goal of assessing whether the learned representation transfers effectively to unseen tasks.
The UL variable parameterizes a shared Conv-4 feature extractor, while the LL variables are task-specific linear classifiers, yielding a convex
but not strongly convex LL problem.
\alert{Proposition~\ref{prop:ennamcq-fewshot} shows that ENNAMCQ holds
for every $\delta>0$.}
Hyperparameters are selected using only meta-validation query loss.

As shown in Figure~\ref{fig:fewshot-omniglot}, IVSP attains the lowest validation and test query losses as well as the lowest test
classification error.
The similar validation and test behavior suggests that the selected
configuration generalizes to the held-out test classes in this experiment.

\begin{figure}[h]
	\centering
	\includegraphics[width=.8\linewidth]{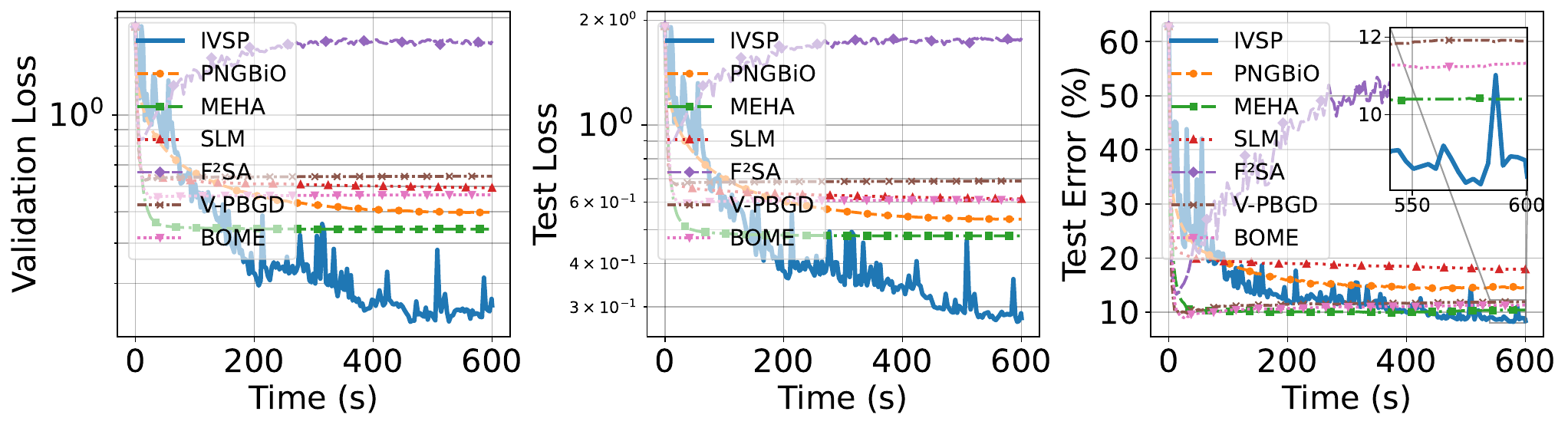}
	\caption{{ Comparison results for few-shot learning on Omniglot.}}
	\label{fig:fewshot-omniglot}
	\vspace{-3mm}
\end{figure}
\section{Conclusion}

We developed IVSP, a deterministic first-order method for bilevel optimization
with weakly convex lower-level objectives, using a positive Moreau-gap
relaxation and a smoothed exact-penalty approach. Under ENNAMCQ, IVSP achieves
finite penalty stabilization and an overall
$\widetilde O(\varepsilon^{-3})$ first-order complexity for computing an
$\varepsilon$-KKT point, without strong convexity, PL, or error-bound
assumptions. Numerical experiments illustrate its competitive performance.
Future work includes improving the complexity and further weakening
the ENNAMCQ-type conditions required for finite penalty stabilization.

\ifarxiv 
\subsection*{AI Use Statement}
\else 
\subsection*{AI use statement}
\fi 
ChatGPT (OpenAI), using the GPT-5.5 and GPT-5.6 Sol models over the course of the project, was used as an auxiliary tool in parts of the theoretical and experimental workflow, as well as in manuscript preparation. In the theoretical work, ChatGPT was used to assist with checking calculations, derivations, proof arguments, and error terms arising from inexact computations. For the numerical experiments, it was used to assist with code implementation and debugging, as well as with developing parameter-tuning strategies. ChatGPT was also used to help identify potentially relevant literature and to assist with restructuring, editing, and polishing parts of the manuscript for clarity and readability. All AI-assisted mathematical arguments, references, code, parameter choices, and reported results were reviewed and verified by the authors. The authors take full responsibility for the final content of the manuscript.

\ifarxiv 
\else 
\subsection*{Ethics statement}

This work is primarily theoretical and does not involve human subjects,
personally identifiable information, or sensitive data. The numerical
experiments use standard publicly available datasets.

\subsection*{Reproducibility statement}

The assumptions, algorithmic details, and complete proofs of the theoretical
results are provided in the main text and appendices. Detailed formulations
of the experimental problems, implementation settings, and hyperparameter
choices are provided in the appendix. Source code for the numerical
experiments is included in the supplementary material.
\fi 

%

\ifarxiv
 \begin{appendices}
\else
\bibliographystyle{iclr2027_conference}
\bibliography{ivsp_bibfile}
\newpage
\appendix 
\fi

\section{Mathematical Preliminaries}
\subsection{Moreau envelope properties}
\label{app:moreau-properties}

We first collect some consequences of Assumption~\ref{assume:A}
that will be used throughout the analysis. Since $\nabla g$ is
$L_g$-Lipschitz continuous on $C=X\times Y$, there exist finite
constants $L_{yx},L_{xy},L_{yy}>0$ such that
\begin{align}
	\|\nabla_y g(x_1,y)-\nabla_y g(x_2,y)\|
	&\leq L_{yx}\|x_1-x_2\|, \label{eq:Lyx}\\
	\|\nabla_x g(x,y_1)-\nabla_x g(x,y_2)\|
	&\leq L_{xy}\|y_1-y_2\|, \label{eq:Lxy}\\
	\|\nabla_y g(x,y_1)-\nabla_y g(x,y_2)\|
	&\leq L_{yy}\|y_1-y_2\|. \label{eq:Lyy}
\end{align}
In particular, one may take each of these constants no larger
than $L_g$. Recall that $\sigma\geq0$ denotes a uniform
weak-convexity modulus of $g(x,\cdot)$ and that $\rho\sigma<1$.
For convenience, define
\begin{equation}
	m_\rho\coloneqq\rho^{-1}-\sigma>0,
	\qquad
	\kappa_\rho\coloneqq\frac{1}{1-\rho\sigma}.
	\label{eq:mrho-kapparho}
\end{equation}

\begin{proposition}[Moreau envelope regularity]
	\label{prop:moreau-regularity}
	Suppose that Assumption~\ref{assume:A} holds, and let $m_\rho, \kappa_\rho$ be given by \eqref{eq:mrho-kapparho}. The following statements hold.
	\begin{enumerate}
		\item[(i)]
		For every $w=(x,y)\in C$, the function
		$\psi_\rho(\cdot;x,y)$ is $m_\rho$-strongly convex on $Y$.
		Consequently, $z_\rho(w)$ is well defined and unique.
		
		\item[(ii)]
		The Moreau envelope $g_\rho$ is continuously differentiable on
		$C$, with
		\begin{equation}
			\nabla_x g_\rho(x,y)
			=
			\nabla_x g(x,z_\rho(w)),
			\qquad
			\nabla_y g_\rho(x,y)
			=
			\rho^{-1}\bigl(y-z_\rho(w)\bigr).
			\label{eq:app-grad-moreau}
		\end{equation}
		
		\item[(iii)]
		For any $w_i=(x_i,y_i)\in C$, $i=1,2$,
		\begin{equation}
			\|z_\rho(w_1)-z_\rho(w_2)\|
			\leq
			\kappa_\rho
			\left(
			\rho L_{yx}\|x_1-x_2\|
			+\|y_1-y_2\|
			\right).
			\label{eq:prox-lip}
		\end{equation}
		In particular, $z_\rho$ is Lipschitz continuous on $C$.
		
		\item[(iv)]
		The residual $R{}$ is continuously differentiable, with
		\begin{equation}
			\nabla R{}(w)
			=
			\begin{pmatrix}
				\nabla_x g(x,y)
				-\nabla_x g(x,z_\rho(w))
				\\[1mm]
				\nabla_y g(x,y)
				-\rho^{-1}(y-z_\rho(w))
			\end{pmatrix}.
			\label{eq:app-grad-R}
		\end{equation}
		Moreover, there exist constants $L_R,B_R>0$ such that
		\begin{equation}
			\|\nabla R{}(w_1)-\nabla R{}(w_2)\|
			\leq L_R\|w_1-w_2\|,
			\qquad \text{and} \qquad 
			\|\nabla R{}(w)\|\leq B_R
			\label{eq:LR-BR}
		\end{equation}
		for all $w,w_1,w_2\in C$.
	\end{enumerate}
\end{proposition}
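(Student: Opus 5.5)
The plan is to prove the four parts in order, since each builds on the previous one.

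For (i), I would use the uniform weak convexity of $g(x,\cdot)$. We write
\[
\psi_\rho(\cdot;x,y)=\Bigl(g(x,\cdot)+\tfrac{\sigma}{2}\|\cdot\|^2\Bigr)+\Bigl(\tfrac{1}{2\rho}\|\cdot-y\|^2-\tfrac{\sigma}{2}\|\cdot\|^2\Bigr).
\]
The first bracket is convex. The second is a quadratic with Hessian $(\rho^{-1}-\sigma)I=m_\rho I$. Hence $\psi_\rho(\cdot;x,y)$ is $m_\rho$-strongly convex. It is also continuous on the compact convex set $Y$, so it has a unique minimizer $z_\rho(w)$.

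For (iii), I would derive Lipschitz continuity directly from the variational inequalities. Write $z_i=z_\rho(w_i)$. Optimality gives
\[
\langle \nabla_y g(x_i,z_i)+\rho^{-1}(z_i-y_i),\,u-z_i\rangle\geq0\quad\text{for all } u\in Y.
\]
Test the $i=1$ inequality with $u=z_2$ and the $i=2$ inequality with $u=z_1$, then add them. This yields
\[
\langle \nabla_y g(x_1,z_1)-\nabla_y g(x_2,z_2)+\rho^{-1}(z_1-z_2)-\rho^{-1}(y_1-y_2),\,z_2-z_1\rangle\geq0.
\]
Next, split $\nabla_y g(x_1,z_1)-\nabla_y g(x_2,z_2)$ into a change in $z$ at fixed $x_1$ plus a change in $x$ at fixed $z_2$.
\begin{itemize}
\item Weak convexity gives monotonicity of $\nabla_y g(x_1,\cdot)+\sigma\,\cdot$, i.e. $\langle\nabla_y g(x_1,z_1)-\nabla_y g(x_1,z_2),z_1-z_2\rangle\geq-\sigma\|z_1-z_2\|^2$.
\item Bound the $x$-change term by \eqref{eq:Lyx}.
\end{itemize}
Together these give
\[
m_\rho\|z_1-z_2\|^2\leq \bigl(L_{yx}\|x_1-x_2\|+\rho^{-1}\|y_1-y_2\|\bigr)\|z_1-z_2\|.
\]
Dividing through and multiplying by $\rho$ gives \eqref{eq:prox-lip}, because $\rho/m_\rho\cdot\rho^{-1}=\kappa_\rho$.

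For (ii), I would apply a Danskin-type argument using the continuity of $z_\rho$ from (iii). The direct route is two-sided bounds on $g_\rho(w')-g_\rho(w)$. For the upper bound, use $z_\rho(w)$ as a feasible point at $w'$:
\[
g_\rho(w')-g_\rho(w)\leq \psi_\rho(z_\rho(w);w')-\psi_\rho(z_\rho(w);w).
\]
For the lower bound, use $z_\rho(w')$ as a feasible point at $w$:
\[
g_\rho(w')-g_\rho(w)\geq \psi_\rho(z_\rho(w');w')-\psi_\rho(z_\rho(w');w).
\]
Both right-hand sides are differences of a $C^1$ function of $w$ evaluated at a fixed $z$. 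Taylor-expand each in $w$. The expansions agree to first order, with the mismatch controlled by $L_g\|z_\rho(w')-z_\rho(w)\|\,\|w'-w\|=O(\|w'-w\|^2)$ by (iii). This yields \eqref{eq:app-grad-moreau}. Continuity of the gradient follows from continuity of $\nabla g$ and $z_\rho$. One technical point is that $g$ is only $C^1$ on a neighborhood of $C$, so differentiability at boundary points of $C$ is understood relative to that neighborhood. This is fine because $\psi_\rho(z;\cdot)$ is defined there.

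For (iv), the gradient formula is immediate: $R=g-g_\rho-\delta$, so $\nabla R=\nabla g-\nabla g_\rho$ with (ii). For the constants:
\begin{itemize}
\item \emph{Lipschitz constant $L_R$.} Each component is built from $\nabla g$, which is $L_g$-Lipschitz, composed with $(x,z_\rho(w))$, which is Lipschitz by (iii), plus the linear term $\rho^{-1}(y-z_\rho(w))$. Combining these gives an explicit $L_R$, for instance $L_R\le 2L_g(1+\kappa_\rho(\rho L_{yx}+1))+\rho^{-1}(1+\kappa_\rho(\rho L_{yx}+1))$.
\item \emph{Bound $B_R$.} $\nabla R$ is continuous on the compact set $C$, so it is bounded. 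Alternatively, $\|\nabla R(w)\|\le L_g\|y-z_\rho(w)\|+\rho^{-1}\|y-z_\rho(w)\|\le(L_g+\rho^{-1})\operatorname{diam}(Y)$.
\end{itemize}

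I expect the main obstacle to be the careful differentiability argument in (ii) near the boundary of $C$. The Lipschitz estimate (iii) is what makes that argument clean, which is why it is proved first.
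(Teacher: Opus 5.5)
Your treatment of (i), (iii) and (iv) matches the paper's proof. It uses the same splitting of $\psi_\rho$ into a convex part plus an $m_\rho$-strongly convex quadratic, the same pair of variational inequalities combined with weak-convexity monotonicity and the $L_{yx}$ bound, and the same composition and compactness arguments for $L_R$ and $B_R$.

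The real difference is (ii). The paper simply invokes Danskin's theorem (Bonnans--Shapiro, Thm.~4.13 and Rem.~4.14). That theorem needs only compactness of $Y$, joint continuity of $\psi_\rho$ and $\nabla_w\psi_\rho$, and uniqueness of the minimizer at the point in question. The paper therefore proves (ii) before (iii), and obtains Fr\'echet differentiability at each $w\in C$ of $g_\rho$ viewed as a function on an open set containing $C$, without any Lipschitz information on $z_\rho$.

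Your sandwich argument instead proves (iii) first and uses it to close the gap between the two one-sided bounds. This is self-contained, and it yields the stronger quantitative estimate $|g_\rho(w')-g_\rho(w)-\langle\nabla g_\rho(w),w'-w\rangle|\le c\|w'-w\|^2$ for $w,w'\in C$. Note, however, that your lower bound uses $z_\rho(w')$ and (iii) at $w'$, and these are only available for $w'\in C$. For $x'\notin X$, $g(x',\cdot)$ need not be $\sigma$-weakly convex, so the minimizer need not be unique. Hence your remark that the boundary issue is fine ``because $\psi_\rho(z;\cdot)$ is defined there'' does not by itself give differentiability on a neighborhood. What you actually prove is the expansion along $C$, which is all the later analysis uses. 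For the neighborhood version, replace $z_\rho(w')$ by any minimizer $z'$ over $Y$ at $w'$ and show $z'\to z_\rho(w)$ using strong convexity of $\psi_\rho(\cdot;w)$ at $w\in C$, or just cite Danskin.

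Two corrections:
\begin{enumerate}
\item In (iii), the identity you need is $1/(\rho m_\rho)=\kappa_\rho$. As written, $\rho/m_\rho\cdot\rho^{-1}=1/m_\rho=\rho\kappa_\rho$.
\item More importantly, your ``alternative'' bound $\|\nabla R(w)\|\le(L_g+\rho^{-1})\|y-z_\rho(w)\|$ is false. It drops the term $\nabla_y g(x,y)$ in the $y$-component, and that term is not controlled by $\|y-z_\rho(w)\|$ when $z_\rho(w)$ lies on the boundary of $Y$. For instance, take $g(x,y)=cy$ with $c>0$, $Y=[0,1]$ and $y=0$: then $z_\rho(w)=0=y$ but $\nabla_y R(w)=c$. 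Keep only the compactness argument, or include $\max_{C}\|\nabla_y g\|$ in the explicit bound.
\end{enumerate}
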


\begin{proof}
	Fix $w=(x,y)\in C$. Since $g(x,\cdot)$ is
	$\sigma$-weakly convex,
	$
	z\mapsto g(x,z)+\frac{\sigma}{2}\|z\|^2
	$
	is convex. Moreover,
	\[
	\frac{1}{2\rho}\|z-y\|^2-\frac{\sigma}{2}\|z\|^2
	=
	\frac{m_\rho}{2}\|z\|^2
	-\rho^{-1}\langle y,z\rangle
	+\frac{1}{2\rho}\|y\|^2.
	\]
	It follows that $\psi_\rho(\cdot;x,y)$ is
	$m_\rho$-strongly convex on $Y$. Since $Y$ is compact and
	$\psi_\rho(\cdot;x,y)$ is continuous, a minimizer exists, while strong
	convexity gives uniqueness; see \cite[Theorem 5.25]{Beck17}. This proves (i).
	
	The differentiability and gradient formulas in (ii) follow from
	Danskin's theorem; see \citet[Theorem 4.13 and Remark 4.14]{bonnans2000perturbation}. 
	
	We next prove (iii). Let $w_i=(x_i,y_i)$ and
	$z_i=z_\rho(w_i)$ for $i=1,2$. The optimality conditions of the
	two proximal subproblems give
	\begin{align*}
		\left\langle
		\nabla_y g(x_1,z_1)+\rho^{-1}(z_1-y_1),
		z_2-z_1
		\right\rangle&\geq0,\\
		\left\langle
		\nabla_y g(x_2,z_2)+\rho^{-1}(z_2-y_2),
		z_1-z_2
		\right\rangle&\geq0.
	\end{align*}
	Adding these inequalities yields
	\begin{align}
		\rho^{-1}
		\langle y_1-y_2,z_1-z_2\rangle \geq \left\langle
		\nabla_y g(x_1,z_1)-\nabla_y g(x_2,z_2),
		z_1-z_2
		\right\rangle
		+\rho^{-1}\|z_1-z_2\|^2		.
		\label{eq:prox-VI}
	\end{align}
	On the other hand, 
	\begin{align*}
		\left\langle
		\nabla_y g(x_1,z_1)-\nabla_y g(x_2,z_2),
		z_1-z_2
		\right\rangle
		& =
		\left\langle
		\nabla_y g(x_1,z_1)-\nabla_y g(x_1,z_2),
		z_1-z_2
		\right\rangle
		\\
		&\qquad+
		\left\langle
		\nabla_y g(x_1,z_2)-\nabla_y g(x_2,z_2),
		z_1-z_2
		\right\rangle
		\\
		&\quad\geq
		-\sigma\|z_1-z_2\|^2
		-
		L_{yx}\|x_1-x_2\|\|z_1-z_2\|.
	\end{align*}
	For the first term above, we used the $\sigma$-weak convexity of $g(x_1,\cdot)$. For the second term, we used Cauchy--Schwarz along with
	\eqref{eq:Lyx}. 
	
	Together with
	\eqref{eq:prox-VI}, we have
	\begin{align*}
		&
		-\sigma\|z_1-z_2\|^2
		-
		L_{yx}\|x_1-x_2\|\|z_1-z_2\|
		+
		\rho^{-1}\|z_1-z_2\|^2\leq
		\rho^{-1}
		\langle y_1-y_2,z_1-z_2\rangle.
	\end{align*}
	Hence,
	\begin{align*}
		(\rho^{-1}-\sigma)\|z_1-z_2\|^2
		&\leq
		L_{yx}\|x_1-x_2\|\|z_1-z_2\|+
		\rho^{-1}
		\langle y_1-y_2,z_1-z_2\rangle
		\\
		&\leq
		L_{yx}\|x_1-x_2\|\|z_1-z_2\|+
		\rho^{-1}\|y_1-y_2\|\|z_1-z_2\|,
	\end{align*}
	where the last inequality again follows from Cauchy--Schwarz. If
	$z_1=z_2$, the claim is immediate. Otherwise, dividing by
	$\|z_1-z_2\|$ yields
	\[
	(\rho^{-1}-\sigma)\|z_1-z_2\|
	\leq
	L_{yx}\|x_1-x_2\|
	+
	\rho^{-1}\|y_1-y_2\|.
	\]
	Therefore,
	\begin{align*}
		\|z_1-z_2\|
		&\leq
		\frac{
			L_{yx}\|x_1-x_2\|
			+\rho^{-1}\|y_1-y_2\|
		}{
			\rho^{-1}-\sigma
		}
		\\
		&=
		\frac{
			\rho L_{yx}\|x_1-x_2\|
			+\|y_1-y_2\|
		}{
			1-\rho\sigma
		}
		\\
		&=
		\kappa_\rho
		\left(
		\rho L_{yx}\|x_1-x_2\|
		+\|y_1-y_2\|
		\right),
	\end{align*}
	which proves~\eqref{eq:prox-lip}.
	
	Formula~\eqref{eq:app-grad-R} follows directly from
	\eqref{eq:app-grad-moreau} and the definition of $R{}$.
	We next verify the Lipschitz continuity of $\nabla R{}$.
	By~\eqref{eq:prox-lip} and Cauchy--Schwarz, there exists
	$K_z>0$ such that
	\[
	\|z_\rho(w_1)-z_\rho(w_2)\|
	\leq
	K_z\|w_1-w_2\|,
	\qquad w_1,w_2\in C.
	\]
	Hence, the map $
	T(w)\coloneqq(x,z_\rho(w))
	$
	is Lipschitz on $C$. Indeed,
	\[
	\|T(w_1)-T(w_2)\|^2
	=
	\|x_1-x_2\|^2
	+
	\|z_\rho(w_1)-z_\rho(w_2)\|^2
	\leq
	(1+K_z^2)\|w_1-w_2\|^2.
	\]
	
	Since $\nabla g$ is $L_g$-Lipschitz on $C$, it follows that
	$
	w\mapsto\nabla g(x,z_\rho(w))
	$
	is Lipschitz on $C$.
	Therefore, each term in~\eqref{eq:app-grad-R} is Lipschitz
	continuous in $w$, and hence there exists $L_R>0$ such that
	\[
	\|\nabla R{}(w_1)-\nabla R{}(w_2)\|
	\leq
	L_R\|w_1-w_2\|,
	\qquad
	w_1,w_2\in C.
	\] Finally, $C$ is compact and $\nabla R{}$ is continuous, so
	$
	B_R\coloneqq\max_{w\in C}\|\nabla R{}(w)\|<\infty.
	$
	This completes the proof of (iv).
\end{proof}

\subsection{Smoothing estimates}
\label{app:smoothing-properties}

We use a standard family of smooth approximations of the plus
function~\citep{beck2012smoothing}. For every $\mu>0$, let
$\phi_\mu$ be convex, continuously differentiable, and
nondecreasing, and suppose that there exist constants
$a_1,a_2,b_\phi,c_\phi\geq0$ such that
\begin{align}
	0& \leq \phi_\mu'(t)\leq1, \label{eq:phi-derivative-bound}\\
	[t]_+-a_1\mu
	&\leq
	\phi_\mu(t)
	\leq
	[t]_++a_2\mu,\label{eq:phi-approx}\\
	|\phi_\mu'(t)-\phi_\mu'(s)|
	&\leq
	\frac{b_\phi}{\mu}|t-s|, \label{eq:phi-derivative-lip}\\
	\phi_{\mu_1}(t)
	&\leq
	\phi_{\mu_0}(t)
	+c_\phi(\mu_0-\mu_1),
	\qquad
	0<\mu_1\leq\mu_0.
	\label{eq:phi-changing-mu}\\
		a_\phi&\coloneqq a_1+a_2.	\label{eq:a-phi}
\end{align}

The next lemma makes precise the $O(1/\mu)$ smoothness property of
$G_\mu$ used in Section~\ref{sec:overview_smoothedmodel}.

\begin{lemma}
	\label{lem:G-lip}
	Under Assumption~\ref{assume:A} and the above conditions on $\phi_\mu$, the function
	$G_\mu=\phi_\mu\circ R{}$ is continuously differentiable and $
	\nabla G_\mu(w)
	=
	\phi_\mu'(R{}(w))\nabla R{}(w).
	$
	Moreover, for every $\mu\in(0,\mu_0]$,
	\begin{equation}
		\|\nabla G_\mu(w)-\nabla G_\mu(\bar w)\|
		\leq
		\frac{L_G}{\mu}\|w-\bar w\|,
		\qquad w,\bar w\in C, \notag
	\end{equation}
	where $L_G\coloneqq b_\phi B_R^2+\mu_0L_R,$ with $B_R$ and $L_R$ given in~\eqref{eq:LR-BR}.
	Furthermore, whenever $0<\mu_1\leq\mu_0$,
	\begin{equation}
		G_{\mu_1}(w)
		\leq
		G_{\mu_0}(w)+c_\phi(\mu_0-\mu_1),
		\qquad w\in C.
		\label{eq:G-changing-mu}
	\end{equation}
\end{lemma}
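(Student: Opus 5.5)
The lemma has three parts: the gradient formula, the $O(1/\mu)$ Lipschitz bound on $\nabla G_\mu$, and the $\mu$-monotonicity estimate~\eqref{eq:G-changing-mu}. I will prove them in that order.

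\emph{Differentiability and gradient formula.} By Proposition~\ref{prop:moreau-regularity}(iv), $R$ is continuously differentiable on $C$. By assumption, $\phi_\mu$ is continuously differentiable on $\mathbb R$. The chain rule then shows that $G_\mu=\phi_\mu\circ R$ is $C^1$ with $\nabla G_\mu(w)=\phi_\mu'(R(w))\nabla R(w)$.

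\emph{Lipschitz bound.} For $w,\bar w\in C$, I add and subtract $\phi_\mu'(R(\bar w))\nabla R(w)$:
\[
\nabla G_\mu(w)-\nabla G_\mu(\bar w)
=\bigl(\phi_\mu'(R(w))-\phi_\mu'(R(\bar w))\bigr)\nabla R(w)
+\phi_\mu'(R(\bar w))\bigl(\nabla R(w)-\nabla R(\bar w)\bigr).
\]
For the first term, \eqref{eq:phi-derivative-lip} gives the factor $b_\phi\mu^{-1}|R(w)-R(\bar w)|$. Since $C$ is convex and $\|\nabla R\|\le B_R$ on $C$ by \eqref{eq:LR-BR}, the mean value theorem along the segment $[\bar w,w]\subset C$ gives $|R(w)-R(\bar w)|\le B_R\|w-\bar w\|$. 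Together with $\|\nabla R(w)\|\le B_R$, the first term is at most $b_\phi B_R^2\mu^{-1}\|w-\bar w\|$. For the second term, \eqref{eq:phi-derivative-bound} and \eqref{eq:LR-BR} bound it by $L_R\|w-\bar w\|$. Since $\mu\le\mu_0$, we have $L_R\le \mu_0L_R/\mu$. Summing the two bounds gives the constant $(b_\phi B_R^2+\mu_0L_R)/\mu=L_G/\mu$.

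\emph{Monotonicity in $\mu$.} Estimate~\eqref{eq:G-changing-mu} follows by applying \eqref{eq:phi-changing-mu} pointwise with $t=R(w)$.

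There is no real obstacle here. The only point needing care is the mean-value step, which uses convexity of $C$ so that the segment $[\bar w,w]$ stays in the region where the bound $B_R$ is valid.
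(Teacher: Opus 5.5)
Your proposal is correct and follows essentially the same route as the paper: the same add-and-subtract decomposition, the bounds $b_\phi B_R^2/\mu$ and $L_R$ with the latter absorbed via $\mu\le\mu_0$, and a pointwise application of \eqref{eq:phi-changing-mu}. Your explicit appeal to convexity of $C$ for the mean-value step $|R(w)-R(\bar w)|\le B_R\|w-\bar w\|$ makes precise a point the paper leaves implicit.
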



\begin{proof}
	The gradient formula follows from the chain rule. The remaining estimates
	also follow as special cases of \citet[Lemma~2.3(i),(iii)]{xu2026smoothing};
	we provide a direct proof here for completeness. For
	$w,\bar w\in C$, 
	\begin{align*}
		\|\nabla G_\mu(w)-\nabla G_\mu(\bar w)\|
		&\leq
		|\phi_\mu'(R(w))-\phi_\mu'(R(\bar w))|
		\|\nabla R(w)\|+
		|\phi_\mu'(R(\bar w))|
		\|\nabla R(w)-\nabla R(\bar w)\|.
	\end{align*}
	By Proposition~\ref{prop:moreau-regularity},
	$\|\nabla R\|\leq B_R$ and $\nabla R$ is $L_R$-Lipschitz.
	Hence
	$
	|R(w)-R(\bar w)|
	\leq B_R\|w-\bar w\|.
	$
	Using $0\leq\phi_\mu'\leq1$ by \eqref{eq:phi-derivative-bound} and the
	$(b_\phi/\mu)$-Lipschitz continuity of $\phi_\mu'$ by \eqref{eq:phi-derivative-lip} gives
	\[
	\|\nabla G_\mu(w)-\nabla G_\mu(\bar w)\|
	\leq
	\left(
	\frac{b_\phi B_R^2}{\mu}+L_R
	\right)
	\|w-\bar w\|.
	\]
	Since $\mu\leq\mu_0$, the right-hand side is bounded by
	$(L_G/\mu)\|w-\bar w\|$.
	
	Finally, \eqref{eq:phi-changing-mu} directly gives
	\[
	G_{\mu_1}(w)
	=
	\phi_{\mu_1}(R{}(w))
	\leq
	\phi_{\mu_0}(R{}(w))
	+c_\phi(\mu_0-\mu_1),
	\]
	which proves~\eqref{eq:G-changing-mu}. 
\end{proof}

The following estimates correspond to bounds used in the proof of
\citet[Lemma~3.2(ii)--(iii)]{xu2026smoothing}; we give a direct proof
under our notation for completeness.
\begin{lemma}[Smoothing derivative estimates]
	\label{lem:phi-products}
	Let $a_\phi$ be given by \eqref{eq:a-phi} and let $\theta>1$.
	For every $\mu>0$ and $t\in\mathbb R$, the following hold:
	\begin{enumerate}
		\item[(i)] If $t>\theta a_\phi\mu$, then
		$
		\phi_\mu'(t)\geq\frac{\theta-1}{\theta};
		$
		\item[(ii)] If $t\leq\theta a_\phi\mu$, then
		$
		|\phi_\mu'(t)t|
		\leq\theta a_\phi\mu.
		$
	\end{enumerate}
\end{lemma}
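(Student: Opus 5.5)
Both parts will follow from the elementary properties \eqref{eq:phi-derivative-bound}--\eqref{eq:phi-approx} of $\phi_\mu$ together with convexity, so the plan is a short direct argument.

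\emph{Part (i).} Fix $t>\theta a_\phi\mu$. Since $\phi_\mu$ is convex and $C^1$, the tangent at $t$ lies below $\phi_\mu$. Evaluating this at the point $0$ gives
\[
\phi_\mu(0)\ \ge\ \phi_\mu(t)-\phi_\mu'(t)\,t,
\qquad\text{that is,}\qquad
\phi_\mu'(t)\,t\ \ge\ \phi_\mu(t)-\phi_\mu(0).
\]
By \eqref{eq:phi-approx} we have $\phi_\mu(t)\ge t-a_1\mu$ (because $t>0$) and $\phi_\mu(0)\le a_2\mu$. Hence $\phi_\mu'(t)\,t\ge t-a_\phi\mu$. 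Dividing by $t>0$ and using $a_\phi\mu/t<1/\theta$ yields $\phi_\mu'(t)\ge 1-1/\theta=(\theta-1)/\theta$.

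\emph{Part (ii).} Fix $t\le\theta a_\phi\mu$ and split into the cases $t\ge0$ and $t<0$.

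If $0\le t\le\theta a_\phi\mu$, then $0\le\phi_\mu'(t)\le1$ by \eqref{eq:phi-derivative-bound}, so $|\phi_\mu'(t)t|\le t\le\theta a_\phi\mu$ immediately.

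The case $t<0$ is the only place needing care, since $|t|$ is unbounded there and the bound cannot come from $\phi_\mu'\le1$ alone. We use convexity again, now comparing with the point $0$. The tangent at $t$ evaluated at $0$ gives $\phi_\mu(0)\ge\phi_\mu(t)+\phi_\mu'(t)(-t)$. Since $-t>0$, this rearranges to
\[
\phi_\mu'(t)\,|t|\ \le\ \phi_\mu(0)-\phi_\mu(t).
\]
By \eqref{eq:phi-approx}, $\phi_\mu(0)\le a_2\mu$ and $\phi_\mu(t)\ge[t]_+-a_1\mu=-a_1\mu$. Therefore $|\phi_\mu'(t)t|\le a_\phi\mu\le\theta a_\phi\mu$, where the last step uses $\theta>1$.

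Both cases together give (ii). The negative-$t$ case was the only anticipated obstacle, and the tangent-line comparison at $0$ resolves it directly, so I do not expect any step to be difficult.
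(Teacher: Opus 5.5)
Your proposal is correct and follows essentially the same argument as the paper. In both parts you use the convexity inequality $t\phi_\mu'(t)\geq\phi_\mu(t)-\phi_\mu(0)$ together with the approximation bounds \eqref{eq:phi-approx}, and in (ii) you make the same split into $t\geq0$ (handled by $0\leq\phi_\mu'\leq1$) and $t<0$ (handled by the tangent comparison at $0$).
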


\begin{proof}
	For~(i), convexity of $\phi_\mu$ gives
	\begin{equation}
		t\phi_\mu'(t)
		\geq
		\phi_\mu(t)-\phi_\mu(0).
		\label{eq:convexity-phi_mu}
	\end{equation}
	For $t>0$, the bound
	\eqref{eq:phi-approx} yields
	$
	\phi_\mu(t)\geq t-a_1\mu,
	$ and
	$\phi_\mu(0)\leq a_2\mu.
	$
	Therefore,
	\[
	\phi_\mu'(t)
	\geq
	1-\frac{(a_1+a_2)\mu}{t}
	=
	1-\frac{a_\phi\mu}{t}.
	\]
	If $t>\theta a_\phi\mu$, then
	$
	\frac{a_\phi\mu}{t}<\frac{1}{\theta},
	$
	and hence
	the claim of part~(i). 
	
	For~(ii), first suppose
	$0\leq t\leq\theta a_\phi\mu$. Since
	$0\leq\phi_\mu'(t)\leq1$, we have
	$
	|\phi_\mu'(t)t|
	\leq t
	\leq\theta a_\phi\mu.
	$
	
	Now suppose $t<0$. By~\eqref{eq:convexity-phi_mu},
	$
	(-t)\phi_\mu'(t)
	\leq
	\phi_\mu(0)-\phi_\mu(t).
	$
	Since $[t]_+=0$ for $t<0$, \eqref{eq:phi-approx} gives
	$
	\phi_\mu(0)\leq a_2\mu,
	$ and
	$\phi_\mu(t)\geq-a_1\mu.
	$
	Consequently,
	$
	(-t)\phi_\mu'(t)
	\leq
	(a_1+a_2)\mu
	=
	a_\phi\mu.
	$
	Since $t<0$ and $\phi_\mu'(t)\geq0$,
	\[
	|\phi_\mu'(t)t|
	=
	(-t)\phi_\mu'(t)
	\leq
	a_\phi\mu
	\leq
	\theta a_\phi\mu.
	\]
	This proves~(ii).
\end{proof}

\subsection{Error bounds for inexact Moreau evaluations}
\label{app:oracle-error}

We next give the explicit error bounds underlying
\eqref{eq:oracle-gradient-error}. Define
\begin{equation}
	L_{a,\rho}
	\coloneqq
	\sqrt{L_{xy}^2+\rho^{-2}},
	\quad
	D_\rho(\zeta)
	\coloneqq
	L_{a,\rho}\sqrt{\frac{2\zeta}{m_\rho}}, \quad \text{and} 	\quad E_{\mu,\rho}(\zeta)
	\coloneqq
	D_\rho(\zeta)
	+\frac{b_\phi B_R}{\mu}\zeta, \label{eq:constants-for-error-bounds}
\end{equation}
with $B_R$ given in \eqref{eq:LR-BR}.
\begin{lemma}[Errors from an inexact Moreau evaluation]
	\label{lem:inexact-errors}
	Let $w=(x,y)\in C$, $\zeta\geq0$, and let
	$\widetilde z\in Y$ satisfy
	\begin{equation}
		0\leq
		\psi_\rho(\widetilde z;w)-g_\rho(w)
		\leq\zeta. \label{eq:zeta-accurate-solution}
	\end{equation}
	Define
	\begin{equation}
			\widetilde g_\rho(w)
		\coloneqq
		\psi_\rho(\widetilde z;w),
		\qquad
		\widetilde R{}(w)
		\coloneqq
		g(w)-\widetilde g_\rho(w)-\delta,
		\label{eq:approximate-constraint}
	\end{equation}
	and
	\begin{equation}
		\widetilde\nabla R{}(w)
		\coloneqq
		\begin{pmatrix}
			\nabla_x g(x,y)-\nabla_x g(x,\widetilde z)
			\\[1mm]
			\nabla_y g(x,y)-\rho^{-1}(y-\widetilde z)
		\end{pmatrix}.
		\label{eq:app-inexact-grad-R}
	\end{equation}
	Then the following estimates hold:
	\begin{enumerate}
		\item[(i)] $	\|\widetilde z-z_\rho(w)\|
		\leq
		\sqrt{\frac{2\zeta}{m_\rho}}.$
		
		\item[(ii)] We have 
		\begin{equation}
			\widetilde R{}(w)
			\leq
			R{}(w)
			\leq
			\widetilde R{}(w)+\zeta.
			\label{eq:app-residual-error}
		\end{equation}
		Consequently,
		\begin{equation}
			\phi_\mu(\widetilde R{}(w))
			\leq
			G_\mu(w)
			\leq
			\phi_\mu(\widetilde R{}(w)+\zeta)
			\label{eq:merit-error-bound}
		\end{equation}
		and
		\begin{equation}
			0
			\leq
			\phi_\mu(\widetilde R{}(w)+\zeta)
			-\phi_\mu(\widetilde R{}(w))
			\leq\zeta.
			\label{eq:phi-value-error}
		\end{equation}
		
		\item[(iii)] $		\|
		\widetilde\nabla R{}(w)-\nabla R{}(w)
		\|
		\leq
		D_\rho(\zeta),$ where $D_\rho(\zeta)$ is given in~\eqref{eq:constants-for-error-bounds}.
		
		\item[(iv)]
		With
		$
		\widetilde\nabla G_\mu(w)
		=
		\phi_\mu'(\widetilde R{}(w))
		\widetilde\nabla R{}(w),
		$
		we have $	\|
		\widetilde\nabla G_\mu(w)-\nabla G_\mu(w)
		\|
		\leq
		E_{\mu,\rho}(\zeta),$ where $E_{\mu,\rho}(\zeta)$ is defined in~\eqref{eq:constants-for-error-bounds}. 
	\end{enumerate}
\end{lemma}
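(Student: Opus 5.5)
The four parts are proved in order, each building on the previous one.

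\emph{Part (i).} I will use strong convexity of $\psi_\rho(\cdot;w)$ from Proposition~\ref{prop:moreau-regularity}(i). Since $z_\rho(w)$ minimizes $\psi_\rho(\cdot;w)$ over $Y$ and $\widetilde z\in Y$, the first-order optimality condition combined with $m_\rho$-strong convexity gives
\[
\psi_\rho(\widetilde z;w)-g_\rho(w)\geq \tfrac{m_\rho}{2}\|\widetilde z-z_\rho(w)\|^2 .
\]
Together with \eqref{eq:zeta-accurate-solution}, this yields $\|\widetilde z-z_\rho(w)\|\leq\sqrt{2\zeta/m_\rho}$.

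\emph{Part (ii).} By definition, $\widetilde R(w)=R(w)-(\widetilde g_\rho(w)-g_\rho(w))$, and the bracketed difference lies in $[0,\zeta]$; this gives \eqref{eq:app-residual-error}. Since $\phi_\mu$ is nondecreasing and $G_\mu=\phi_\mu\circ R$, applying $\phi_\mu$ to \eqref{eq:app-residual-error} gives \eqref{eq:merit-error-bound}. For \eqref{eq:phi-value-error}, the mean value theorem together with $0\leq\phi_\mu'\leq1$ from \eqref{eq:phi-derivative-bound} shows the increment is between $0$ and $\zeta$.

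\emph{Part (iii).} Subtracting \eqref{eq:app-grad-R} from \eqref{eq:app-inexact-grad-R}, the $\nabla g(x,y)$ terms cancel. The $x$-block difference is $\nabla_x g(x,z_\rho(w))-\nabla_x g(x,\widetilde z)$, whose norm is at most $L_{xy}\|\widetilde z-z_\rho(w)\|$ by \eqref{eq:Lxy}. The $y$-block difference is $\rho^{-1}(\widetilde z-z_\rho(w))$. Therefore
\[
\|\widetilde\nabla R(w)-\nabla R(w)\|\leq\sqrt{L_{xy}^2+\rho^{-2}}\,\|\widetilde z-z_\rho(w)\|=L_{a,\rho}\|\widetilde z-z_\rho(w)\|,
\]
and part (i) bounds this by $D_\rho(\zeta)$.

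\emph{Part (iv).} I will split the error as
\[
\widetilde\nabla G_\mu-\nabla G_\mu=\phi_\mu'(\widetilde R)\bigl(\widetilde\nabla R-\nabla R\bigr)+\bigl(\phi_\mu'(\widetilde R)-\phi_\mu'(R)\bigr)\nabla R .
\]
For the first term, $|\phi_\mu'|\leq1$ and part (iii) give the bound $D_\rho(\zeta)$. For the second term, \eqref{eq:phi-derivative-lip} together with $|\widetilde R-R|\leq\zeta$ from part (ii) and $\|\nabla R\|\leq B_R$ from \eqref{eq:LR-BR} give the bound $b_\phi B_R\zeta/\mu$. Summing the two bounds gives $E_{\mu,\rho}(\zeta)$.

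There is no real obstacle. The only point requiring care is in part (i): the inequality $\psi_\rho(\widetilde z;w)-g_\rho(w)\geq\frac{m_\rho}{2}\|\widetilde z-z_\rho(w)\|^2$ holds even though the minimization is constrained to $Y$. This follows because the variational inequality $\langle\nabla\psi_\rho(z_\rho(w);w),\widetilde z-z_\rho(w)\rangle\geq0$ holds for $\widetilde z\in Y$, so the linear term in the strong convexity inequality is nonnegative and can be dropped.
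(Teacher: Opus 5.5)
Your proposal is correct and follows the paper's proof essentially step by step: quadratic growth from $m_\rho$-strong convexity for (i), the identity $R-\widetilde R=\widetilde g_\rho-g_\rho\in[0,\zeta]$ together with monotonicity and $1$-Lipschitz continuity of $\phi_\mu$ for (ii), the blockwise bound with constant $L_{a,\rho}$ for (iii), and the same two-term triangle-inequality split for (iv). The only difference is cosmetic. You justify the constrained quadratic-growth inequality via the variational inequality at $z_\rho(w)$, whereas the paper cites a standard textbook result for it.
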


\begin{proof}
	Since $\psi_\rho(\cdot;w)$ is $m_\rho$-strongly convex and
	$z_\rho(w)$ is its minimizer, we have from \cite[Theorem 5.25]{Beck17} that 
\begin{equation}
		\psi_\rho(\widetilde z;w)
	\geq
	\psi_\rho(z_\rho(w);w)
	+
	\frac{m_\rho}{2}
	\|\widetilde z-z_\rho(w)\|^2.
	\label{eq:strongconvexity-psi_rho}
\end{equation}
	Combining this with \eqref{eq:zeta-accurate-solution} and noting that $	\psi_\rho(z_\rho(w);w) = g_\rho(w)$
	give~(i).
	
	For part~(ii), let
	$
	e
	\coloneqq
	\widetilde g_\rho(w)-g_\rho(w)
	$. Note that $e\in [0,\zeta]$ due to \eqref{eq:zeta-accurate-solution} and that $e = R(w) - \widetilde{R}{}(w)$ by noting \eqref{eq:approximate-constraint}. Hence, \eqref{eq:app-residual-error}~holds. Since $\phi_\mu$ is
	nondecreasing, this immediately yields~\eqref{eq:merit-error-bound}.
	Moreover, $0\leq\phi_\mu'\leq1$, and therefore $\phi_\mu$ is
	$1$-Lipschitz. Hence~\eqref{eq:phi-value-error} follows.
	
For~(iii), subtract
	\eqref{eq:app-grad-R} from~\eqref{eq:app-inexact-grad-R}:
	\[
	\widetilde\nabla R{}(w)-\nabla R{}(w)
	=
	\begin{pmatrix}
		\nabla_x g(x,z_\rho(w))
		-\nabla_x g(x,\widetilde z)
		\\[1mm]
		\rho^{-1}(\widetilde z - z_\rho(w))
	\end{pmatrix}.
	\]
	Using~\eqref{eq:Lxy} and part~(i),
	\begin{align*}
		\|
		\widetilde\nabla R{}(w)-\nabla R{}(w)
		\|
		&\leq
		\sqrt{L_{xy}^2+\rho^{-2}}\,
		\|\widetilde z-z_\rho(w)\|\leq
		L_{a,\rho}\sqrt{\frac{2\zeta}{m_\rho}}
		=
		D_\rho(\zeta),
	\end{align*}
	which proves~(iii).
	
	Finally, denote
	$\widetilde R=\widetilde R{}(w)$ and $R=R{}(w)$.
	Then
	\begin{align*}
		\|
		\widetilde\nabla G_\mu(w)-\nabla G_\mu(w)
		\| & =
		\|
		\phi_\mu'(\widetilde R)
		\widetilde\nabla R{}(w)
		-
		\phi_\mu'(R)\nabla R{}(w)
		\|\\
		&\quad\leq
		|\phi_\mu'(\widetilde R)|
		\|
		\widetilde\nabla R{}(w)-\nabla R{}(w)
		\|+
		|\phi_\mu'(\widetilde R)-\phi_\mu'(R)|
		\|\nabla R{}(w)\|.
	\end{align*}
	Together with $0\leq\phi_\mu'\leq1$,
	the $(b_\phi/\mu)$-Lipschitz continuity of $\phi_\mu'$,
	\eqref{eq:app-residual-error}, and
	$\|\nabla R{}(w)\|\leq B_R$, we obtain
	\[
	\|
	\widetilde\nabla G_\mu(w)-\nabla G_\mu(w)
	\|
	\leq
	D_\rho(\zeta)
	+\frac{b_\phi B_R}{\mu}\zeta
	=
	E_{\mu,\rho}(\zeta).
	\]
	This proves~(iv).
\end{proof}

\subsection{Properties of the positive Moreau-gap relaxation}
\label{app:moreau-relaxation}

This subsection clarifies the role of the positive relaxation parameter
$\delta>0$ in~\eqref{eq:relaxed-ME} and the relation between
the Moreau-gap constraint and lower-level stationarity. 

\paragraph{Lower-level interpretation of the relaxation.} Define
\begin{equation}
	H_\rho(x,y)
	\coloneqq
	g(x,y)-g_\rho(x,y),
	\qquad
	\mathcal F_\delta
	\coloneqq
	\{(x,y)\in C:H_\rho(x,y)\leq\delta\}.
	\label{eq:moreau-feasible-sets}
\end{equation}
Thus, $\mathcal F_\delta$ is the feasible set of
\eqref{eq:relaxed-ME}, while $\mathcal F_0$ corresponds to
the exact Moreau-gap constraint. We also define
\begin{equation}
	\widetilde S(x)
	\coloneqq
	\left\{
	y\in Y:
	0\in\nabla_y g(x,y)+N_Y(y)
	\right\}
	\label{eq:widetilde-S}
\end{equation}
as the lower-level stationary-point mapping. 

The Moreau envelope reformulation was introduced by
\citet{gao2026moreau}. Under convexity of the lower-level problem,
the exact Moreau-gap constraint is equivalent to lower-level global
optimality~\citep{gao2026moreau}. For weakly convex lower-level
objectives, the exact constraint instead characterizes lower-level
first-order stationarity; see \citet{MEHA}.
The proposition below recalls this equivalence and additionally gives
quantitative bounds relating the Moreau gap to lower-level
near-stationarity.

\begin{proposition}[Stationarity properties of the Moreau gap]
	\label{prop:moreau-gap-stationarity}
	Suppose that $g(x,\cdot)$ is $\sigma$-weakly convex and
	$\rho\sigma<1$. Let $H_\rho$ and $\mathcal F_\delta$ be given by
	\eqref{eq:moreau-feasible-sets}. Then, for every $(x,y)\in C$,
	\begin{equation}
		H_\rho(x,y)
		\geq
		\frac{1-\rho\sigma}{2\rho}
		\|y-z_\rho(x,y)\|^2.
		\label{eq:gap-prox-bound-app}
	\end{equation}
	Moreover,
	\begin{equation}
		\dist\!\left(
		0,
		\nabla_y g(x,z_\rho(x,y))
		+N_Y(z_\rho(x,y))
		\right)
		\leq
		\frac{1}{\rho}\|y-z_\rho(x,y)\|
		\leq
		\sqrt{
			\frac{2H_\rho(x,y)}
			{\rho(1-\rho\sigma)}
		}.
		\label{eq:prox-stationarity-gap-bound}
	\end{equation}
	In particular,
	\begin{equation}
		H_\rho(x,y)=0
		\quad\Longleftrightarrow\quad
		y=z_\rho(x,y)
		\quad\Longleftrightarrow\quad
		0\in\nabla_y g(x,y)+N_Y(y).
		\label{eq:exact-gap-equivalence-app}
	\end{equation}
	Hence,
	\begin{equation}
			\mathcal F_0
		=
		\{(x,y)\in C:y\in\widetilde S(x)\},
		\label{eq:F0}
	\end{equation}
	where $\widetilde{S}$ is given by~\eqref{eq:widetilde-S}.
\end{proposition}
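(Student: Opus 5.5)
Fix $(x,y)\in C$ and write $z\coloneqq z_\rho(x,y)$. The plan is to prove the three claims in order, each resting on the strong convexity of $\psi_\rho(\cdot;x,y)$ from Proposition~\ref{prop:moreau-regularity}(i), with modulus $m_\rho=(1-\rho\sigma)/\rho$.

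\textbf{The lower bound~\eqref{eq:gap-prox-bound-app}.} Apply the quadratic growth inequality~\eqref{eq:strongconvexity-psi_rho} at the feasible point $\widetilde z=y\in Y$. Since $\psi_\rho(y;x,y)=g(x,y)$ and $\psi_\rho(z;x,y)=g_\rho(x,y)$, this gives
\[
H_\rho(x,y)=\psi_\rho(y;x,y)-\psi_\rho(z;x,y)\geq \tfrac{m_\rho}{2}\|y-z\|^2,
\]
which is exactly~\eqref{eq:gap-prox-bound-app}.

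\textbf{The stationarity bound~\eqref{eq:prox-stationarity-gap-bound}.} Write the first-order optimality condition of the convex subproblem at its minimizer $z$:
\[
0\in\nabla_y g(x,z)+\rho^{-1}(z-y)+N_Y(z).
\]
Hence $\rho^{-1}(y-z)\in\nabla_y g(x,z)+N_Y(z)$, and so the distance from $0$ to this set is at most $\rho^{-1}\|y-z\|$. By~\eqref{eq:gap-prox-bound-app}, $\|y-z\|\le\sqrt{2\rho H_\rho/(1-\rho\sigma)}$. Dividing by $\rho$ gives the second inequality in~\eqref{eq:prox-stationarity-gap-bound}.

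\textbf{The equivalences~\eqref{eq:exact-gap-equivalence-app}.}
\begin{itemize}
\item $H_\rho=0\Rightarrow y=z$: this follows from~\eqref{eq:gap-prox-bound-app}.
\item $y=z\Rightarrow H_\rho=0$: immediate, since then $g_\rho(x,y)=\psi_\rho(y;x,y)=g(x,y)$.
\item $y=z\Rightarrow 0\in\nabla_y g(x,y)+N_Y(y)$: substitute $z=y$ into the optimality condition above.
\item $0\in\nabla_y g(x,y)+N_Y(y)\Rightarrow y=z$: this is the only step needing care. Note that $\nabla_z\psi_\rho(y;x,y)=\nabla_y g(x,y)$, because the quadratic term has zero gradient at $z=y$. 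So the hypothesis says $y$ satisfies the first-order condition $0\in\nabla_z\psi_\rho(y;x,y)+N_Y(y)$. Since $\psi_\rho(\cdot;x,y)$ is strongly convex on the convex set $Y$, this condition is sufficient for global minimality, and uniqueness of the minimizer then gives $y=z$.
\end{itemize}

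\textbf{The identity~\eqref{eq:F0}.} Because $H_\rho\ge0$ by~\eqref{eq:gap-prox-bound-app}, the constraint $H_\rho\le0$ defining $\mathcal F_0$ is equivalent to $H_\rho=0$. By the equivalences just proved, this holds exactly when $y\in\widetilde S(x)$, as defined in~\eqref{eq:widetilde-S}.

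The main, though mild, obstacle is the converse direction in the last bullet: it is the one place where convexity of the subproblem, rather than just $\rho\sigma<1$ as a bookkeeping condition, is genuinely used.
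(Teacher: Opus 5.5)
Your proof is correct and follows essentially the same route as the paper. You use quadratic growth of the strongly convex subproblem evaluated at $y$, then the first-order optimality condition at $z_\rho(x,y)$, and then the equivalences; you additionally spell out the sufficiency direction $0\in\nabla_y g(x,y)+N_Y(y)\Rightarrow y=z_\rho(x,y)$ and the role of $H_\rho\ge 0$ in \eqref{eq:F0}, both of which the paper leaves implicit.

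One small quibble: your closing remark overstates things. Strong convexity of $\psi_\rho(\cdot;x,y)$ is equally essential in the quadratic-growth step giving \eqref{eq:gap-prox-bound-app} and in the uniqueness of $z_\rho$, not only in that converse direction.
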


\begin{proof}
	By strong convexity of $\psi_\rho(\cdot; x,y)$, as in \eqref{eq:strongconvexity-psi_rho}, we have 
	\[
	\psi_\rho(y;x,y)
	-
	\psi_\rho(z_\rho(x,y);x,y)
	\geq
	\frac{m_\rho}{2}
	\|y-z_\rho(x,y)\|^2.
	\]
	Since
	$\psi_\rho(y;x,y)=g(x,y)$ and
	$\psi_\rho(z_\rho(x,y);x,y)=g_\rho(x,y)$,
	this gives~\eqref{eq:gap-prox-bound-app}.
	
	Since $z_\rho(x,y)$ minimizes $\psi_\rho(\cdot,x,y) = g(x,\cdot) + \frac{1}{2\rho} \norm{\cdot - y }^2,$ we have 
	\begin{equation}
		\rho^{-1}(y-z_\rho(x,y)) \in
		\nabla_y g(x,z_\rho(x,y)) + N_Y(z_\rho(x,y)), \label{eq:z-rho-optimality}
	\end{equation}
	which proves the first inequality in
	\eqref{eq:prox-stationarity-gap-bound}; the second follows from
	\eqref{eq:gap-prox-bound-app}.
	
	Finally,~\eqref{eq:gap-prox-bound-app} implies that
	$H_\rho(x,y)=0$ only if $y=z_\rho(x,y)$, while the converse is
	immediate since $g_\rho(x,y) = g(x,z_\rho(x,y)) + \frac{1}{2\rho} \norm{z_\rho(x,y) - y}^2$. By the optimality
	condition of the proximal subproblem,
	$y=z_\rho(x,y)$ holds if and only if
	$0\in\nabla_y g(x,y)+N_Y(y)$. Thus, \eqref{eq:exact-gap-equivalence-app} holds. 
\end{proof}

The preceding result also gives a direct interpretation of the
feasibility residual used in our approximate KKT condition.

\begin{corollary}[Interpretation of approximate feasibility]
	\label{cor:kkt-lower-level-stationarity}
	Let $w=(x,y)\in C$ satisfy
	$
	[R(w)]_+\leq\varepsilon_f,
	$ where
	$R(w)=H_\rho(w)-\delta.
	$
	Then
$
	H_\rho(w)\leq\delta+\varepsilon_f,
	$
	and consequently
	\begin{align}
		\|y-z_\rho(x,y)\|
		&\leq
		\sqrt{
			\frac{2\rho(\delta+\varepsilon_f)}
			{1-\rho\sigma}
		}, \notag \\
		\dist\!\left(
		0,
		\nabla_y g(x,z_\rho(x,y))
		+N_Y(z_\rho(x,y))
		\right)
		&\leq
		\sqrt{
			\frac{2(\delta+\varepsilon_f)}
			{\rho(1-\rho\sigma)}
		}.\notag 
	\end{align}
\end{corollary}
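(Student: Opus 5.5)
This follows directly from Proposition~\ref{prop:moreau-gap-stationarity}, so the plan is a two-line reduction rather than a new argument.

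First, translate the hypothesis $[R(w)]_+\leq\varepsilon_f$ into a bound on the Moreau gap. Since $t\leq[t]_+$ for every real $t$, we get $R(w)\leq\varepsilon_f$. By definition $R(w)=H_\rho(w)-\delta$, so this gives $H_\rho(w)\leq\delta+\varepsilon_f$, which is the first claim.

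Second, insert this into the quantitative bounds of Proposition~\ref{prop:moreau-gap-stationarity}. From \eqref{eq:gap-prox-bound-app},
\[
\frac{1-\rho\sigma}{2\rho}\|y-z_\rho(x,y)\|^2\leq H_\rho(w)\leq\delta+\varepsilon_f .
\]
Since $1-\rho\sigma>0$, we can rearrange and take square roots to obtain the displacement bound $\|y-z_\rho(x,y)\|\leq\sqrt{2\rho(\delta+\varepsilon_f)/(1-\rho\sigma)}$. For the stationarity bound, chain the two inequalities in \eqref{eq:prox-stationarity-gap-bound}. The first is $\dist\le\rho^{-1}\|y-z_\rho\|$, which comes from the optimality inclusion \eqref{eq:z-rho-optimality}. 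The second expresses $\rho^{-1}\|y-z_\rho\|$ through $\sqrt{2H_\rho/(\rho(1-\rho\sigma))}$. Because the square root is monotone, replacing $H_\rho(w)$ by the upper bound $\delta+\varepsilon_f$ gives the stated estimate. As a check, $\rho^{-1}\sqrt{2\rho(\delta+\varepsilon_f)/(1-\rho\sigma)}=\sqrt{2(\delta+\varepsilon_f)/(\rho(1-\rho\sigma))}$.

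There is no real obstacle here. The only points to check are that $1-\rho\sigma>0$, so the division and square roots are legitimate, and that $H_\rho\geq0$, so the square root is well defined. Both follow from the standing assumption $\rho\sigma<1$ and from \eqref{eq:gap-prox-bound-app}.
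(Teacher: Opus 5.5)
Your proposal is correct and follows essentially the same route as the paper: use $R(w)\le[R(w)]_+\le\varepsilon_f$ to get $H_\rho(w)\le\delta+\varepsilon_f$, then substitute into the bounds \eqref{eq:gap-prox-bound-app} and \eqref{eq:prox-stationarity-gap-bound} of Proposition~\ref{prop:moreau-gap-stationarity}. Your extra checks, namely the positivity of $1-\rho\sigma$ and the algebra identifying the two square-root expressions, are exactly what the paper leaves implicit.
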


\begin{proof}
	Since $R(w)\leq[R(w)]_+$, we obtain
	$
	H_\rho(w)=R(w)+\delta
	\leq\delta+\varepsilon_f.
	$
	The two estimates then follow from
	Proposition~\ref{prop:moreau-gap-stationarity}.
\end{proof}

\paragraph{Why a positive relaxation is needed for KKT analysis.}
The exact Moreau-gap constraint has an inherent constraint-qualification
degeneracy. Recall that, for a feasible point $w$ of a smooth
inequality $R(w)\leq0$ over the closed convex set $C$, the
\emph{no nonzero abnormal multiplier constraint qualification}
(NNAMCQ) requires that
\[
0\notin \nabla R(w)+N_C(w),
\]
whenever $R(w)=0$.
For a single smooth inequality, this condition is equivalent to the
Mangasarian--Fromovitz constraint qualification (MFCQ) relative to
$C$ \citep{jourani1994constraint}. 

\begin{proposition}[Degeneracy of the exact Moreau-gap constraint]
	\label{prop:exact-gap-degeneracy}
	Under Assumption~\ref{assume:A}, NNAMCQ (equivalently, MFCQ), fails
	at every feasible point of the exact Moreau-gap formulation
	$H_\rho\leq0$. On the other hand, for every $\delta>0$, the
	relaxed constraint $H_\rho\leq\delta$ admits a strictly feasible
	point.
\end{proposition}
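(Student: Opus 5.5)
The proof has two parts: a degeneracy claim at exact-feasible points, and an existence claim for the relaxed constraint.

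\textbf{Degeneracy.} Fix $w\in C$ with $H_\rho(w)\leq 0$. By \eqref{eq:gap-prox-bound-app}, $H_\rho\geq 0$ on $C$, so $H_\rho(w)=0$ and the exact constraint $R_0:=H_\rho$ is active at $w$. Moreover, $w$ is a global minimizer of the continuously differentiable function $H_\rho$ over the closed convex set $C$. The differentiability comes from Proposition~\ref{prop:moreau-regularity}(iv), since $\nabla H_\rho=\nabla R$.

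The first-order necessary condition for minimizing a $C^1$ function over a convex set then gives $0\in\nabla H_\rho(w)+N_C(w)$. This is exactly the failure of NNAMCQ at an active point, and by the cited equivalence \citep{jourani1994constraint} MFCQ fails as well.

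As an optional direct check, at such $w$ we have $y=z_\rho(w)$ by \eqref{eq:exact-gap-equivalence-app}. Substituting into \eqref{eq:app-grad-R} gives $\nabla H_\rho(w)=(0,\nabla_y g(x,y))$. The stationarity $0\in\nabla_y g(x,y)+N_Y(y)$, together with $0\in N_X(x)$, reconfirms the inclusion.

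\textbf{Strict feasibility.} We need some $w\in C$ with $H_\rho(w)<\delta$. Fix any $x\in X$. Since $Y$ is compact and $g(x,\cdot)$ is continuous, a global minimizer $y^\ast$ of $g(x,\cdot)$ over $Y$ exists. It satisfies $0\in\nabla_y g(x,y^\ast)+N_Y(y^\ast)$, so $H_\rho(x,y^\ast)=0<\delta$ by \eqref{eq:exact-gap-equivalence-app}. One can also see this directly: $g_\rho(x,y^\ast)\geq \min_Y g(x,\cdot)=g(x,y^\ast)$, and combined with $H_\rho\geq 0$ this forces $H_\rho(x,y^\ast)=0$.

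\textbf{Main obstacle.} The only delicate point is phrasing the failure of NNAMCQ through the minimization argument. One must ensure the normal-cone optimality condition applies, which needs $H_\rho\in C^1$ on a neighborhood of $C$, or at least directional derivatives along feasible directions. Proposition~\ref{prop:moreau-regularity} supplies this.
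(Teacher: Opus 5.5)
Your proposal is correct and follows the paper's proof essentially verbatim. Nonnegativity of $H_\rho$ makes every exact-feasible point an active global minimizer over $C$, and the first-order condition $0\in\nabla H_\rho(w)+N_C(w)$ is precisely the failure of NNAMCQ; a lower-level minimizer $y^\star$ then gives $H_\rho(x,y^\star)=0<\delta$. Your direct check via $y=z_\rho(w)$ and $\nabla H_\rho(w)=(0,\nabla_y g(x,y))$, and the elementary bound $g_\rho(x,y^\star)\ge g(x,y^\star)$, are valid extras that the paper does not need.
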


\begin{proof}
	Since $H_\rho\geq0$ on $C$, every
	$w\in\mathcal F_0$ is a global minimizer of $H_\rho$ over $C$.
	The first-order necessary condition therefore gives
	$
	0\in\nabla H_\rho(w)+N_C(w),
	$
	so NNAMCQ, equivalently MFCQ, fails. For the second claim, fix any $x\in X$ and choose
	$y^\star\in\argmin_{y\in Y}g(x,y)$. Then
	$0\in\nabla_y g(x,y^\star)+N_Y(y^\star)$, and
	Proposition~\ref{prop:moreau-gap-stationarity} gives
	$H_\rho(x,y^\star)=0<\delta$.
\end{proof}

Thus, the positive relaxation removes the unavoidable degeneracy of
the exact formulation in the sense that strict feasibility becomes
possible. Strict feasibility alone, however, does not guarantee the
extended constraint qualification required in our convergence
analysis. Sufficient conditions to guarantee ENNAMCQ are explored next in Appendix~\ref{app:ennamcq-sufficient}. 

\paragraph{Behavior as $\delta\downarrow0$.}
The positive Moreau-gap relaxation is consistent with the exact
constraint as the relaxation level vanishes. Indeed, since
$H_\rho$ is continuous and $C$ is compact, the sets
$\mathcal F_\delta=\{w\in C:H_\rho(w)\leq\delta\}$ are nonempty
and compact, and
\[
\mathcal F_0
\subseteq
\mathcal F_{\delta_1}
\subseteq
\mathcal F_{\delta_2}
\qquad
(0\leq\delta_1\leq\delta_2),
\qquad
\bigcap_{\delta>0}\mathcal F_\delta
=
\mathcal F_0.
\]
Consequently,
$d_{\rm H}(\mathcal F_\delta,\mathcal F_0)\to0$ as
$\delta\downarrow0$, where, for nonempty compact sets $A$ and $B$,
\[
d_{\rm H}(A,B)
\coloneqq
\max\left\{
\sup_{a\in A}\dist(a,B),
\sup_{b\in B}\dist(b,A)
\right\}.
\]
By~\eqref{eq:F0}, $\mathcal F_0$ corresponds to lower-level
stationary points and, in general, need not be the feasible set of
the original bilevel problem. When $\widetilde S(x)=S(x) \coloneqq \argmin_{y\in Y} g(x,y)$ for every
$x\in X$, in particular under lower-level convexity, the above
Hausdorff convergence is to the original bilevel feasible set.
This is consistent with the approximation result of
\citet[Proposition~2]{gao2026moreau}, who show in the convex
lower-level setting that sufficiently small positive relaxations
admit local minimizers arbitrarily close to the bilevel solution set. Nevertheless, this geometric consistency does not imply
uniform algorithmic complexity as $\delta\downarrow0$; see
Example~\ref{ex:eta-delta-deterioration} and the discussion following
it. Accordingly, throughout our complexity analysis, $\delta>0$ is
fixed, and the resulting bounds are not claimed to be uniform as
$\delta\downarrow0$.

\section{Sufficient conditions for ENNAMCQ}
\label{app:ennamcq-sufficient}

This section gives several sufficient conditions under which the
ENNAMCQ assumption used in the main text holds. These
conditions cover lower-level problems arising in many important bilevel optimization problems.

\subsection{Convex lower-level problems without nonconstant affine segments}
For fixed $x\in X$, define the extended-valued lower-level objective
\begin{equation}
	h_x(y):=g(x,y)+\iota_Y(y),
	\label{eq:h_x}
\end{equation}
where $\iota_Y$ denotes the indicator function of $Y$.

We consider the following property:
\begin{equation}
	\label{eq:flat-affine-property}
	\begin{split}
		&\text{if }y_0,y_1\in Y
		\text{ and }
		t\mapsto g\bigl(x,(1-t)y_0+ty_1\bigr)
		\text{ is affine on }[0,1],\\
		&\text{then }
		g\bigl(x,(1-t)y_0+ty_1\bigr)
		\text{ is constant on }[0,1].
	\end{split}
\end{equation}
In other words, affine segments are allowed, but only when the
lower-level objective is constant along them. This condition is
strictly weaker than strict convexity.

\begin{proposition}[Automatic ENNAMCQ without nonconstant affine segments]
	\label{prop:ennamcq-flat-affine}
	Let $\delta>0$, and suppose the standing smoothness assumptions hold.
	Fix $\bar x\in X$. Suppose that $g(\bar x,\cdot)$ is convex on $Y$ and
	satisfies \eqref{eq:flat-affine-property}. Then
	$
	0\notin \nabla R(\bar x,y)+N_C(\bar x,y)
	$
	for every $y\in Y$ such that $R(\bar x,y)\ge0$.
	Consequently, if \eqref{eq:flat-affine-property} holds for every $x\in X$, then ENNAMCQ
	holds on $X\times Y$.
	
	In particular, 	if $g(x,\cdot)+\iota_Y$ is strictly convex for all $x\in X$, then ENNAMCQ holds on $X\times Y$. 
\end{proposition}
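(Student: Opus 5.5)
The plan is to argue by contradiction using only the $y$-block of the inclusion. Fix $\bar x$ and $y\in Y$ with $R(\bar x,y)\ge0$, and write $z:=z_\rho(\bar x,y)$ and $v:=\rho^{-1}(y-z)$. Since $R\geq0$ means $H_\rho(\bar x,y)\ge\delta>0$, Proposition~\ref{prop:moreau-gap-stationarity} (specifically \eqref{eq:exact-gap-equivalence-app}) gives $y\ne z$.

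Suppose, contrary to the claim, that $0\in\nabla R(\bar x,y)+N_C(\bar x,y)$. Since $N_C=N_X\times N_Y$, the $y$-component of \eqref{eq:app-grad-R} yields
\[
v-\nabla_y g(\bar x,y)\in N_Y(y).
\]
The optimality condition \eqref{eq:z-rho-optimality} of the proximal subproblem gives
\[
v-\nabla_y g(\bar x,z)\in N_Y(z).
\]
Test the first inclusion against $z-y$ and the second against $y-z$. Adding the two inequalities, the $v$-terms cancel and we obtain
\[
\langle \nabla_y g(\bar x,y)-\nabla_y g(\bar x,z),\,y-z\rangle\le0.
\]
Monotonicity of the gradient of the convex function $g(\bar x,\cdot)$ forces equality here.

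Next, define $\varphi(t):=g(\bar x,z+t(y-z))$ on $[0,1]$. This function is convex and $C^1$, so $\varphi'$ is nondecreasing. The equality above says $\varphi'(1)=\varphi'(0)$, so $\varphi'$ is constant and $\varphi$ is affine on $[0,1]$. By \eqref{eq:flat-affine-property}, $\varphi$ is then constant, and in particular $\varphi'(0)=\langle\nabla_y g(\bar x,z),y-z\rangle=0$.

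Now return to the inequality from the second inclusion, $\langle v-\nabla_y g(\bar x,z),y-z\rangle\le0$. Combined with $\varphi'(0)=0$, it gives $\rho^{-1}\|y-z\|^2=\langle v,y-z\rangle\le0$. This contradicts $y\neq z$.

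An alternative way to reach the contradiction is through the gap itself. Since $\varphi$ is constant, $g(\bar x,y)=g(\bar x,z)$, so $H_\rho(\bar x,y)=-\tfrac1{2\rho}\|y-z\|^2<0<\delta$, which contradicts $R\ge0$.

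For the global statement, applying the above at every $x\in X$ yields ENNAMCQ on $X\times Y$. For the strictly convex case, strict convexity of $g(x,\cdot)+\iota_Y$ rules out affine behavior on any nondegenerate segment in $Y$, so \eqref{eq:flat-affine-property} holds vacuously.

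The only delicate step is the passage from the vanishing monotonicity product to affineness along the segment, which goes through the one-dimensional restriction $\varphi$ as above. Everything else is a direct comparison of the two normal-cone inclusions.
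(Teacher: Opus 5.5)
Your proof is correct and follows essentially the same route as the paper. In both, the two inclusions say that $\rho^{-1}(y-z)$ is a common subgradient of $h_{\bar x}=g(\bar x,\cdot)+\iota_Y$ at $y$ and at $z$, which forces $g(\bar x,\cdot)$ to be affine, hence constant, on $[y,z]$, and so $y=z$, contradicting $R(\bar x,y)\ge0$. The only difference is cosmetic: the paper gets affineness by combining the two subgradient inequalities on function values with the convexity upper bound, whereas you get it from the vanishing monotonicity product and the derivative of the restriction $\varphi$.
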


\begin{proof}
	Fix $y\in Y$ with $R(\bar x,y)\ge0$ and suppose, to the contrary, that
	\begin{equation}
		\label{eq:flat-affine-ennamcq-failure}
		0\in \nabla R(\bar x,y)+N_C(\bar x,y).
	\end{equation}
	Let $
	z:=z_\rho(\bar x,y),$ and  $p:=\rho^{-1}(y-z).$
	Since $C=X\times Y$, taking the $y$-component of
	\eqref{eq:flat-affine-ennamcq-failure} and using
	$
\nabla _y R(\bar x,y)
	=
	\nabla_y g(\bar x,y)-\rho^{-1}(y-z)
	$ from \eqref{eq:app-grad-R}
	gives
	\begin{equation}
		\label{eq:flat-affine-common-subgradient-y}
		p\in \nabla_y g(\bar x,y)+N_Y(y)
		=\partial h_{\bar x}(y),
	\end{equation}
	where $h_{\bar x}$ is defined as in \eqref{eq:h_x}. 
	On the other hand, the optimality condition for the proximal subproblem
	defining $z$ gives
	$
	0\in
	\nabla_y g(\bar x,z)
	+\rho^{-1}(z-y)
	+N_Y(z),
	$
	and therefore
	\begin{equation}
		\label{eq:flat-affine-common-subgradient-z}
		p\in \nabla_y g(\bar x,z)+N_Y(z)
		=\partial h_{\bar x}(z).
	\end{equation}
	
	Since $h_{\bar x}$ is convex,
	\eqref{eq:flat-affine-common-subgradient-y}--\eqref{eq:flat-affine-common-subgradient-z}
	yield
	\begin{align*}
			h_{\bar x}(z)
		& \ge
		h_{\bar x}(y)+\langle p,z-y\rangle,  \\
			h_{\bar x}(y)
		& \ge
		h_{\bar x}(z)+\langle p,y-z\rangle.
	\end{align*}
	Thus, the two inequalities must both hold with equality. Hence
	\begin{equation}
		\label{eq:flat-affine-subgradient-equality}
		h_{\bar x}(z)
		=
		h_{\bar x}(y)+\langle p,z-y\rangle.
	\end{equation}
	For $t\in[0,1]$, let $y_t=(1-t)y+tz$. By convexity,
	\begin{equation}
		\label{eq:flat-affine-convexity-upper}
		h_{\bar x}(y_t)
		\le
		(1-t)h_{\bar x}(y)+t h_{\bar x}(z).
	\end{equation}
	Using $p\in\partial h_{\bar x}(y)$ and
	\eqref{eq:flat-affine-subgradient-equality},
	\[
	\begin{split}
		h_{\bar x}(y_t)
		&\ge
		h_{\bar x}(y)
		+t\langle p,z-y\rangle=
		(1-t)h_{\bar x}(y)+t h_{\bar x}(z).
	\end{split}
	\]
	Together with \eqref{eq:flat-affine-convexity-upper}, this shows that
	$h_{\bar x}$, and hence
	$g(\bar x,\cdot)$, is affine on the segment $[y,z]$. By assumption \eqref{eq:flat-affine-property}, it must therefore be constant on this
	segment, so
	$
	h_{\bar x}(y)=h_{\bar x}(z).
	$
	Combining this with \eqref{eq:flat-affine-subgradient-equality} gives
	\[
	0
	=
	\langle p,y-z\rangle
	=
	\rho^{-1}\|y-z\|^2,
	\]
	and so $y=z$. Therefore
	$
	H_\rho(\bar x,y)=0
	$
	and consequently
	$
	R(\bar x,y)=-\delta<0,
	$
	contradicting $R(\bar x,y)\ge0$. This completes the proof. 
\end{proof}

We provide an application to a non-strictly-convex lower-level problem used in few-shot learning; see Appendix~\ref{app:experimental-details}.

\begin{proposition}[ENNAMCQ for few-shot learning]
	\label{prop:ennamcq-fewshot}
	Let $T,m,p\in\mathbb N$. For each task $i=1,\ldots,T$, let
	$
	S_i=\{(u_{i,r},q_{i,r})\}_{r=1}^{n_i},
	$
	with $q_{i,r}\in\{1,\ldots,m\}$, be a finite support set.
	For fixed $x\in X$ and a finite dataset $D$, let
	$F_{x,D}(u)\in\mathbb R^p$ denote the feature vector produced for
	$u$ by the shared feature extractor when its batch-dependent
	statistics are computed from $D$. For each $i$ and $r$, set
	$
	F_{i,r}(x)\coloneqq F_{x,S_i}(u_{i,r}).
	$ Let
	$(W_i,b_i)\in\mathbb R^{m\times p}\times\mathbb R^m$ be the
	task-specific linear classifier, and set
	$
	y=((W_i,b_i))_{i=1}^T\in Y,
	$
	where $Y\subseteq
	\prod_{i=1}^T(\mathbb R^{m\times p}\times\mathbb R^m)$ is convex.
	Define
	$
	\ell(s,q)
	:=
	\log\left(\sum_{j=1}^m e^{s_j}\right)-s_q,
	$ for $
	s=(s_1,\ldots,s_m)\in\mathbb R^m,
	$
	and for all $
	y=((W_i,b_i))_{i=1}^T\in Y,
	$ 
	\begin{equation*}
		g(x,y)
		:=
		\frac1T\sum_{i=1}^T\frac1{n_i}
		\sum_{r=1}^{n_i}
\ell\bigl(W_iF_{i,r}(x)+b_i,q_{i,r}\bigr).
	\end{equation*}
	Then, for every fixed $x\in X$, $g(x,\cdot)$ is convex on $Y$
	and satisfies \eqref{eq:flat-affine-property}. Consequently, ENNAMCQ holds for every
	$\delta>0$.
\end{proposition}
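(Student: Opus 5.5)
The plan is to verify convexity directly and then reduce \eqref{eq:flat-affine-property} to a one-dimensional statement about the log-sum-exp function. After that, Proposition~\ref{prop:ennamcq-flat-affine} gives ENNAMCQ for every $\delta>0$.

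Fix $x\in X$. The features $F_{i,r}(x)$ depend only on $x$ and the support sets, not on $y$. Hence each map $y\mapsto s_{i,r}(y):=W_iF_{i,r}(x)+b_i$ is linear in $y$. The function $\ell(\cdot,q)$ is log-sum-exp minus a linear term, so it is convex, and $g(x,\cdot)$ is convex as an average of convex functions composed with linear maps.

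For \eqref{eq:flat-affine-property}, take $y_0,y_1\in Y$ and suppose $\varphi(t):=g(x,(1-t)y_0+ty_1)$ is affine on $[0,1]$. Write $d_{i,r}:=s_{i,r}(y_1)-s_{i,r}(y_0)\in\mathbb R^m$. Then $\varphi(t)=\frac1T\sum_i\frac1{n_i}\sum_r\ell(s_{i,r}(y_0)+td_{i,r},q_{i,r})$ is a positively weighted sum of convex functions of $t$. If such a sum is affine, each summand must be affine: the sum has zero second derivative, and each summand has nonnegative second derivative, so every summand has zero second derivative.

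So it suffices to study one term $t\mapsto \mathrm{lse}(s+td)-s_q-td_q$. Its second derivative is $d^\top(\mathrm{Diag}(\pi)-\pi\pi^\top)d$, where $\pi=\mathrm{softmax}(s+td)$ has all entries strictly positive. This is the variance of $d_j$ under $\pi$, and it vanishes if and only if all $d_j$ are equal, i.e.\ $d=c\mathbf 1$ for some scalar $c$. In that case $\mathrm{lse}(s+tc\mathbf 1)=\mathrm{lse}(s)+tc$, so the term equals $\mathrm{lse}(s)-s_q+tc-tc$, which is constant in $t$. Hence every summand is constant, and therefore $\varphi$ is constant. This establishes \eqref{eq:flat-affine-property}.

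With convexity and \eqref{eq:flat-affine-property} holding for every $x\in X$, Proposition~\ref{prop:ennamcq-flat-affine} applies and yields ENNAMCQ on $X\times Y$ for every $\delta>0$.

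The only delicate step is the affine-implies-constant argument for softmax cross-entropy, in particular the shift invariance along $d=c\mathbf 1$. The variance identity settles it. The second-derivative argument is valid because each summand is $C^\infty$ in $t$, so no smoothness issues arise.
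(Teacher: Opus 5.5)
Your proposal is correct and follows essentially the same route as the paper. Both arguments use the variance form of the softmax Hessian $\operatorname{Diag}(p)-pp^\top$, which vanishes only along $\mathbf 1$. An affine $\varphi$ therefore forces each $d_{i,r}$ to be a multiple of $\mathbf 1$, shift invariance of $\ell$ makes every term constant, and Proposition~\ref{prop:ennamcq-flat-affine} then gives ENNAMCQ.
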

\begin{proof}
	Fix $x$ and $y_0,y_1\in Y$. For $\nu=0,1$, write
	$
	y_\nu=\bigl((W_i^{[\nu]},b_i^{[\nu]})\bigr)_{i=1}^T,
	$
	let $y_t=(1-t)y_0+ty_1=\bigl((W_i(t),b_i(t))\bigr)_{i=1}^T$, where
	$
	W_i(t):=(1-t)W_i^{[0]}+tW_i^{[1]},$ and
	$b_i(t):=(1-t)b_i^{[0]}+tb_i^{[1]},
	$
	and write
	$\Delta W_i:=W_i^{[1]}-W_i^{[0]}$,
	$\Delta b_i:=b_i^{[1]}-b_i^{[0]}$. For $(u,q)\in S_i$, set
	$d_{i,r}:=\Delta W_iF_{i,r}(x)+\Delta b_i$, and let
	$\varphi(t):=g(x,y_t)$.
	
	For the softmax vector
	$p_j(s):=e^{s_j}/\sum_{k=1}^m e^{s_k}$, a direct calculation gives
	$
	\nabla^2\ell(s,q)=\operatorname{Diag}(p(s))-p(s)p(s)^\top
	$
	and, for any $d\in\mathbb R^m$,
	\[
	d^\top\nabla^2\ell(s,q)d
	=
	\sum_{j=1}^m p_j(s)
	\left(d_j-\sum_{k=1}^m p_k(s)d_k\right)^2
	\ge0.
	\]
	Since $p_j(s)>0$ for every $j$, equality holds if and only if
	$d=\alpha\mathbf1$ for some $\alpha\in\mathbb R$, where $\mathbf 1$ is a vector of ones. Meanwhile, a direct calculation shows
	\[
	\varphi''(t)
	=
	\frac1T\sum_{i=1}^T\frac1{n_i}
	\sum_{r=1}^{n_i}
	d_{i,r}^\top
	\nabla^2\ell\bigl(W_i(t)F_{i,r}(x)+b_i(t),q_{i,r}\bigr)d_{i,r}
	\ge0,
	\]
	so $g(x,\cdot)$ is convex. If $\varphi$ is affine on $[0,1]$,
	then $\varphi''(t)=0$ on $(0,1)$. From the equivalence noted above, we have for each $(u,q)\in S_i$ that
	$d_{i,r}=\alpha_{i,r}\mathbf1$ for some
	$\alpha_{i,r}\in\mathbb R$. Thus,
	\[
	W_i(t)F_{i,r}(x)+b_i(t)
	=
	W_i^{[0]}F_{i,r}(x)+b_i^{[0]}
	+t\alpha_{i,r}\mathbf1.
	\]
Moreover, for any $\beta\in\mathbb R$,
$
\ell(s+\beta\mathbf1,q)
=
\log(e^\beta\sum_j e^{s_j})-s_q-\beta
=
\ell(s,q).
$
Therefore, taking $\beta=t\alpha_{i,r}$,
\[
\ell\bigl(W_i(t)F_{i,r}(x)+b_i(t),q_{i,r}\bigr)
=
\ell\bigl(W_i^{[0]}F_{i,r}(x)+b_i^{[0]},q_{i,r}\bigr)
\]
for every $t\in[0,1]$, and hence
\[
\varphi(t)
=
\frac1T\sum_{i=1}^T\frac1{n_i}
\sum_{r=1}^{n_i}
\ell\bigl(W_i^{[0]}F_{i,r}(x)+b_i^{[0]},q_{i,r}\bigr)
=
\varphi(0).
\]
Thus \eqref{eq:flat-affine-property} holds, and
Proposition~\ref{prop:ennamcq-flat-affine} yields ENNAMCQ. 
\end{proof}

\subsection{Nonconvex lower level problem with sample reweighting}

We next give a sufficient condition for ENNAMCQ for a specific class of nonconvex smooth loss functions that does not require convexity of
the lower-level objective.

\begin{proposition}[ENNAMCQ for sample-reweighting models]
	\label{prop:ennamcq-reweighting}
	Suppose the standing assumptions hold. Let $X=[-M,M]^n$ and let $Y$ be a compact set. Suppose
	\begin{equation}
		\label{eq:reweighting-model}
		g(x,y)
		=
		\frac1n\sum_{i=1}^n\omega(x_i)\ell_i(y),
		\qquad
		\omega\ge0,\quad \omega'>0\ \text{on }[-M,M],
	\end{equation}
	where each $\ell_i$ is continuous on $Y$. Define
	$
	D_\ell
	:=
	\max_{1\le i\le n}
	\left(
	\max_{y\in Y}\ell_i(y)-\min_{y\in Y}\ell_i(y)
	\right).
	$
	If $
		\delta>\omega(-M)D_\ell,$
	then ENNAMCQ holds on $X\times Y$.
\end{proposition}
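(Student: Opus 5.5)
The plan is to argue by contradiction and use only the $x$-block of the inclusion $0\in\nabla R(w)+N_C(w)$; the $y$-block will not be needed. The box structure of $X$ together with $\omega'>0$ should force every coordinate on which the loss $\ell_i$ decreases from $y$ to $z_\rho(w)$ to sit at the lower bound $x_i=-M$. That caps the Moreau gap by $\omega(-M)D_\ell$.

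Fix $w=(x,y)\in C$ with $R(w)\ge0$, and suppose $0\in\nabla R(w)+N_C(w)$. Write $z:=z_\rho(w)$ and $a_i:=\ell_i(y)-\ell_i(z)$.

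\textbf{Step 1: the $x$-block.} Since $C=X\times Y$, we have $N_C(w)=N_X(x)\times N_Y(y)$. By \eqref{eq:app-grad-R} and the form \eqref{eq:reweighting-model}, the $i$-th $x$-component of $\nabla R(w)$ is
\[
\nabla_{x_i}g(x,y)-\nabla_{x_i}g(x,z)=\tfrac1n\,\omega'(x_i)\,a_i .
\]
Here $\omega$ is differentiable, as implicit in the standing smoothness assumption. Because $X=[-M,M]^n$ is a box, $N_X(x)$ is a product of coordinate cones: $\{0\}$ if $|x_i|<M$, $[0,\infty)$ if $x_i=M$, and $(-\infty,0]$ if $x_i=-M$. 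The inclusion therefore says $-\tfrac1n\omega'(x_i)a_i\in N_{[-M,M]}(x_i)$ for each $i$. If $a_i>0$, then $\omega'(x_i)>0$ gives $-\tfrac1n\omega'(x_i)a_i<0$. This is only possible when $x_i=-M$.

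\textbf{Step 2: bound the gap.} Drop the nonnegative proximal term in $g_\rho$:
\[
H_\rho(w)=g(x,y)-g(x,z)-\tfrac1{2\rho}\|z-y\|^2\le \tfrac1n\sum_{i=1}^n\omega(x_i)a_i .
\]
Split the sum by sign. Terms with $a_i\le0$ contribute at most $0$, since $\omega\ge0$. Terms with $a_i>0$ have $x_i=-M$ by Step 1, and $a_i\le D_\ell$ because $y,z\in Y$. Each such term is therefore at most $\omega(-M)D_\ell$. Averaging over at most $n$ indices gives $H_\rho(w)\le\omega(-M)D_\ell<\delta$. Hence $R(w)=H_\rho(w)-\delta<0$, which contradicts $R(w)\ge0$.

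I expect no real obstacle. The only points needing care are the sign convention for the box normal cone in Step 1 and checking that $z=z_\rho(w)\in Y$, so that $a_i\le D_\ell$.
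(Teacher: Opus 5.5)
Your proof is correct and follows the paper's argument. As in the paper, you use only the $x$-block of the failed ENNAMCQ inclusion, together with the box normal cone and $\omega'>0$, to show that any positive difference $\ell_i(y)-\ell_i(z_\rho(w))$ forces $x_i=-M$; dropping the proximal term then gives $H_\rho(w)\le\omega(-M)D_\ell<\delta$. Splitting by the sign of $a_i$ instead of the paper's three cases (interior, $x_i=-M$, $x_i=M$) is only a cosmetic streamlining and yields the same estimate.
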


\begin{proof}
	Suppose ENNAMCQ fails at $(x,y)$ with $R(x,y)\geq 0$,
	and let
	$
	z:=z_\rho(x,y)
	$
	and
	$
	\Delta_i:=\ell_i(y)-\ell_i(z).
	$
	From \eqref{eq:reweighting-model},
	we have $
	\frac{\partial R}{\partial x_i}(x,y)
	=
	\frac{\omega'(x_i)}{n}\Delta_i.
	$
	Noting~\eqref{eq:app-grad-R}, the $x_i$-component of
	$
	0\in\nabla R(x,y)+N_{X\times Y}(x,y)
	$
	therefore gives
	\[
	0\in
	\frac{\omega'(x_i)}{n}\Delta_i
	+N_{[-M,M]}(x_i).
	\]
	Since $\omega'(x_i)>0$, the above together with the definition of normal cone gives 
	\[
	\Delta_i
	=
	\begin{cases}
		0, & -M<x_i<M,\\
		\ge0, & x_i=-M,\\
		\le0, & x_i=M.
	\end{cases}
	\]
	Thus, using $\omega\ge0$,
	\begin{align*}
		g(x,y)-g(x,z)
		&=
		\frac1n\sum_{i=1}^n\omega(x_i)\Delta_i \\
		& = 	\frac1n\sum_{\{i: x_i = M\text{ or }-M\}}\omega(x_i)\Delta_i \\
		&\leq
		\frac{\omega(-M)}{n}
		\sum_{\{i:x_i=-M\}}\Delta_i\\
		& \leq
		\omega(-M)D_\ell.
	\end{align*}
	Consequently,
	\[
	R(x,y)
	=
	g(x,y)-g(x,z)-\frac1{2\rho}\|z-y\|^2 - \delta 
	\le
	\omega(-M)D_\ell -\delta 
	<
	0,
	\]
	where the last inequality holds from the choice of $\delta$. Since $R(x,y)\geq 0$, we arrived at a contradiction. This completes the proof. 
\end{proof}

A relevant specialization of Proposition~\ref{prop:ennamcq-reweighting} is obtained for sigmoid sample weights and multiclass cross-entropy loss. The data hyper-cleaning problems in our experiments are instances of this setting; see Appendix~\ref{app:experimental-details}.
\begin{corollary}[ENNAMCQ for sigmoid-reweighted classification]
	\label{cor:ennamcq-hypercleaning}
	Let $M,B_y>0$, $n,m,d,d_h\in\mathbb N$, $X=[-M,M]^n$, and
	let $u_i\in\mathbb R^d$ and $q_i\in\{1,\ldots,m\}$,
	$i=1,\ldots,n$. Let
	$
	\omega(t):=(1+e^{-t})^{-1}
	$
	denote the sigmoid sample-weight function. For
	$
	y=(A,a,V,b)\in Y\subseteq
	\mathbb R^{d_h\times d}\times\mathbb R^{d_h}
	\times\mathbb R^{m\times d_h}\times\mathbb R^m,
	$
	define
	$
	h_y(u):=V\operatorname{sigm}(Au+a)+b,
	$
	where $\operatorname{sigm}$ denotes the logistic sigmoid applied
	componentwise, and set
	$
	\ell_i(y):=\ell(h_y(u_i),q_i),
	$
	where $\ell$ is defined in
	Proposition~\ref{prop:ennamcq-fewshot}.  Suppose $Y$ is nonempty, compact and convex,
	and every component of $V$ and $b$ lies in $[-B_y,B_y]$, and let $g$
	be given by \eqref{eq:reweighting-model}. If
	$
	\delta>
	\bigl(2B_y(d_h+1)+\log m\bigr)/(1+e^M),
	$
	then ENNAMCQ holds on $X\times Y$.
\end{corollary}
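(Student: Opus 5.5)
This follows directly from Proposition~\ref{prop:ennamcq-reweighting}; we only need to check its hypotheses and bound $\omega(-M)D_\ell$ by the stated constant.

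\textbf{Hypotheses of Proposition~\ref{prop:ennamcq-reweighting}.}
The sigmoid $\omega(t)=(1+e^{-t})^{-1}$ is positive on $[-M,M]$. Its derivative $\omega'(t)=\omega(t)(1-\omega(t))$ is also positive there. Each $\ell_i(y)=\ell(h_y(u_i),q_i)$ is a composition of smooth maps, so it is continuous (indeed smooth) on the compact convex set $Y$. Hence the standing smoothness assumptions hold, and $X=[-M,M]^n$ has the required form. We also have
\[
\omega(-M)=(1+e^{M})^{-1}.
\]
It therefore suffices to show $D_\ell\le 2B_y(d_h+1)+\log m$.

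\textbf{Bounding the logits.}
Write $s=h_y(u_i)=V\operatorname{sigm}(Au_i+a)+b$. The hidden vector $v=\operatorname{sigm}(Au_i+a)$ lies in $(0,1)^{d_h}$. Since every entry of $V$ and $b$ lies in $[-B_y,B_y]$, each logit satisfies
\[
|s_j|\le \sum_{k=1}^{d_h}|V_{jk}|\,v_k+|b_j|\le B_y d_h+B_y=B_y(d_h+1).
\]

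\textbf{Bounding the range of the loss.}
For cross-entropy, $\ell(s,q)=\log\sum_j e^{s_j}-s_q$, we use two elementary bounds.

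For the lower bound, $\log\sum_j e^{s_j}\ge \max_j s_j\ge s_q$, so $\ell\ge 0$.

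For the upper bound, $\log\sum_j e^{s_j}\le \log m+\max_j s_j$, so
\[
\ell(s,q)\le \log m+\max_j s_j-s_q\le \log m+2B_y(d_h+1).
\]

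Thus every $\ell_i$ takes values in $[0,\,\log m+2B_y(d_h+1)]$ on $Y$. This gives
\[
D_\ell\le 2B_y(d_h+1)+\log m.
\]

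\textbf{Conclusion.}
Combining the bounds, $\omega(-M)D_\ell\le \bigl(2B_y(d_h+1)+\log m\bigr)/(1+e^M)<\delta$, and Proposition~\ref{prop:ennamcq-reweighting} yields ENNAMCQ on $X\times Y$.

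There is no real obstacle here. The only point needing care is the uniform logit bound, which uses that sigmoid outputs lie in $(0,1)$ together with the entrywise bounds on $V$ and $b$. The bound is valid regardless of $A$ and $a$, so no constraint on those parameters is needed.
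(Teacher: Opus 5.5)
Your proposal is correct and follows the paper's proof essentially step for step. Both arguments invoke Proposition~\ref{prop:ennamcq-reweighting} via $\omega>0$ and $\omega'=\omega(1-\omega)>0$, bound the logits by $B_y(d_h+1)$ using $\operatorname{sigm}\in(0,1)$, deduce $0\le\ell\le 2B_y(d_h+1)+\log m$ and hence the bound on $D_\ell$, and conclude with $\omega(-M)=(1+e^M)^{-1}$. The only difference is that you spell out the log-sum-exp bounds $\max_j s_j\le\log\sum_j e^{s_j}\le\log m+\max_j s_j$, which the paper leaves implicit.
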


\begin{proof}
	Since $\omega(t)>0$ and
	$\omega'(t)=\omega(t)(1-\omega(t))>0$,
	Proposition~\ref{prop:ennamcq-reweighting} applies.
	Since $0<\operatorname{sigm}(t)<1$ for every $t\in\mathbb R$,
	for every $j=1,\ldots,m$,
	$
	|[h_y(u)]_j|
	\leq
	\sum_{\ell=1}^{d_h}|V_{j\ell}|
	\,|\operatorname{sigm}(Au+a)_\ell|
	+|b_j|
	\leq
	d_hB_y+B_y=B_y(d_h+1).
	$
	Hence, for every $q\in\{1,\ldots,m\}$,
	\[
	0\le
	\ell(h_y(u),q)
	\le
	2B_y(d_h+1)+\log m,
	\]
	and therefore
	$
	D_\ell\le2B_y(d_h+1)+\log m.
	$
	Moreover, $\omega(-M)=(1+e^M)^{-1}$, so
	\[
	\omega(-M)D_\ell
	\le
	\frac{2B_y(d_h+1)+\log m}{1+e^M}
	<
	\delta.
	\]
	The conclusion follows from
	Proposition~\ref{prop:ennamcq-reweighting}.
\end{proof}

\section{Complete algorithm}
\label{app:complete-algorithm}
We first describe the inexact Moreau evaluation used by IVSP. Since the
proximal objective-gap condition \eqref{eq:oracle-gap} involves the
unknown value $g_\rho(x,y)$, we use projected gradient and terminate it
using the computable residual criterion established below. The resulting
routine is given in Algorithm~\ref{alg:ME-evaluation}.

\begin{lemma}[Residual-based stopping rule for the Moreau evaluation]
	\label{lem:ME-residual-stopping}
	Let
	$
	L_{\psi,\rho}\coloneqq L_{yy}+\rho^{-1},
	$
	where $L_{yy}$ is the Lipschitz constant in~\eqref{eq:Lyy}.
	Given $w=(x,y)\in C$, define
	$
	h(\cdot)\coloneqq\psi_\rho(\cdot;x,y).
	$
	Let $u\in Y$, let
	$0<\eta\leq2/L_{\psi,\rho}$,
	and define
	\[
	z^+
	\coloneqq
	P_Y\!\left(u-\eta\nabla h(u)\right)
	\qquad \text{and} \qquad
	r_\eta(u)
	\coloneqq
	\frac{u-z^+}{\eta}.
	\]
	Then
	$
	0
	\leq
	h(z^+)-g_\rho(x,y)
	\leq
	\frac{1}{2m_\rho}\|r_\eta(u)\|^2,
	$ where $m_\rho$ is given by~\eqref{eq:mrho-kapparho}
	Consequently,
	\begin{equation}
		\|r_\eta(u)\|^2\leq2m_\rho\zeta
		\label{eq:ME-stopping-rule}
	\end{equation}
	guarantees that $z^+$ is $\zeta$-accurate in the sense of
	\eqref{eq:oracle-gap}.
\end{lemma}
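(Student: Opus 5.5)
The plan is to treat $h=\psi_\rho(\cdot;x,y)$ as a smooth strongly convex function on $Y$ and use the standard gradient-mapping bound for projected gradient steps. By Proposition~\ref{prop:moreau-regularity}(i), $h$ is $m_\rho$-strongly convex on $Y$. By \eqref{eq:Lyy}, $\nabla h(z)=\nabla_y g(x,z)+\rho^{-1}(z-y)$ is $L_{\psi,\rho}$-Lipschitz. The lower bound $h(z^+)-g_\rho(x,y)\geq0$ is immediate, because $z^+\in Y$ and $g_\rho(x,y)=\min_{z\in Y}h(z)$. So only the upper bound needs work.

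\textbf{Step 1: a subgradient at $z^+$.} From the projection characterization of $z^+=P_Y(u-\eta\nabla h(u))$,
\[
r_\eta(u)-\nabla h(u)\in N_Y(z^+),
\qquad\text{hence}\qquad
v\coloneqq r_\eta(u)+\nabla h(z^+)-\nabla h(u)\in\nabla h(z^+)+N_Y(z^+)=\partial(h+\iota_Y)(z^+).
\]

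\textbf{Step 2: strong convexity.} Since $h+\iota_Y$ is $m_\rho$-strongly convex with minimum value $g_\rho(x,y)$, any $v\in\partial(h+\iota_Y)(z^+)$ satisfies
\[
h(z^+)-g_\rho(x,y)\leq \frac{1}{2m_\rho}\|v\|^2 .
\]
This follows by minimizing the strong-convexity lower bound $h(z)\geq h(z^+)+\langle v,z-z^+\rangle+\tfrac{m_\rho}{2}\|z-z^+\|^2$ over all $z$.

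\textbf{Step 3: the main obstacle, controlling $\|v\|$ by $\|r_\eta(u)\|$.} The Lipschitz bound $\|\nabla h(z^+)-\nabla h(u)\|\leq L_{\psi,\rho}\eta\|r_\eta(u)\|$ only gives $\|v\|\leq(1+\eta L_{\psi,\rho})\|r_\eta(u)\|$. With $\eta\leq2/L_{\psi,\rho}$ that factor can reach $3$, which would weaken the constant. To recover exactly $\tfrac{1}{2m_\rho}\|r_\eta(u)\|^2$, I will avoid $v$ and argue through function values instead.

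First I apply the descent lemma to $h$ at $u$, using the projection inequality $\langle \nabla h(u), z^+-u\rangle\leq -\eta\|r_\eta\|^2$. This gives
\[
h(z^+)\leq h(u)-\eta\Bigl(1-\frac{L_{\psi,\rho}\eta}{2}\Bigr)\|r_\eta\|^2\leq h(u),
\]
where the last inequality uses $\eta\le 2/L_{\psi,\rho}$.

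Next I apply the standard inequality for the gradient mapping: for all $z\in Y$,
\[
h(z^+)\leq h(z)+\langle r_\eta,u-z\rangle-\eta\Bigl(1-\frac{L\eta}{2}\Bigr)\|r_\eta\|^2-\frac{m_\rho}{2}\|z-u\|^2 .
\]
This comes from combining strong convexity at $u$, the descent lemma, and the projection inequality.

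Then I drop the nonpositive term $-\eta(1-L\eta/2)\|r_\eta\|^2$ and minimize the right-hand side over $z\in\mathbb{R}^m$ via $\langle r,u-z\rangle-\tfrac{m_\rho}{2}\|z-u\|^2\leq\tfrac{1}{2m_\rho}\|r\|^2$. Taking $z=z_\rho(w)$ then yields
\[
h(z^+)-g_\rho(x,y)\leq\frac{1}{2m_\rho}\|r_\eta(u)\|^2 .
\]
The step I expect to be delicate is writing this gradient-mapping inequality cleanly with the admissible range $\eta\le 2/L$, rather than the usual $\eta\le 1/L$. The sign of $1-L\eta/2$ is exactly what the hypothesis guarantees.

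Finally, the stopping rule \eqref{eq:ME-stopping-rule} immediately gives $h(z^+)-g_\rho(x,y)\leq\zeta$, which is the accuracy condition \eqref{eq:oracle-gap}.
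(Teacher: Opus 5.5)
Your proposal is correct and matches the paper's proof. The argument you actually carry out in Step 3 is the paper's argument: the descent lemma at $u$, strong convexity at $u$ evaluated at $z=z_\rho(w)$, and the projection inequality, followed by Young's inequality after dropping the nonpositive term $-\eta(1-L_{\psi,\rho}\eta/2)\|r_\eta(u)\|^2$. The exploratory Steps~1--2 and the preliminary bound $h(z^+)\le h(u)$ are not needed.
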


\begin{proof}
	Let $z_\rho=z_\rho(w)$. By the optimality condition for the
	projection defining $z^+$ and noting that $z_\rho\in Y$,
	\begin{equation}
		\langle
		\nabla h(u),z^+-z_\rho
		\rangle
		\leq
		\langle
		r_\eta(u),z^+-z_\rho
		\rangle.
		\label{eq:projection-inequality}
	\end{equation}
	By the $L_{\psi,\rho}$-smoothness of $h$, we have from
	\cite[Lemma 5.7]{Beck17} that
	\[
	h(z^+)
	\leq
	h(u)
	+
	\langle\nabla h(u),z^+-u\rangle
	+
	\frac{L_{\psi,\rho}}{2}\|z^+-u\|^2.
	\]
	On the other hand, the $m_\rho$-strong convexity of $h$ (Proposition~\ref{prop:moreau-regularity}(i)) gives
	\[
	h(z_\rho)
	\geq
	h(u)
	+
	\langle\nabla h(u),z_\rho-u\rangle
	+
	\frac{m_\rho}{2}\|z_\rho-u\|^2.
	\]
	Subtracting the latter inequality from the former and using
	\eqref{eq:projection-inequality} yields
	\begin{equation}
		h(z^+)-h(z_\rho)
		\leq
		\langle r_\eta(u),z^+-z_\rho\rangle
		+
		\frac{L_{\psi,\rho}}{2}\|z^+-u\|^2
		-
		\frac{m_\rho}{2}\|z_\rho-u\|^2.
		\label{eq:gap}
	\end{equation}
	Since $z^+=u-\eta r_\eta(u)$, the right-hand side of
	\eqref{eq:gap} equals
	\begin{align}
		&
		\langle r_\eta(u),u-z_\rho\rangle
		-\eta\left(
		1-\frac{\eta L_{\psi,\rho}}{2}
		\right)\|r_\eta(u)\|^2
		-\frac{m_\rho}{2}\|u-z_\rho\|^2.
		\label{eq:rhs-upperbound}
	\end{align}
	Meanwhile, Young's inequality yields
	\begin{equation}
		\langle r_\eta(u),u-z_\rho\rangle
		\leq
		\frac{1}{2m_\rho}\|r_\eta(u)\|^2
		+\frac{m_\rho}{2}\|u-z_\rho\|^2.
		\label{eq:young-complexity}
	\end{equation}
	Combining \eqref{eq:young-complexity}, \eqref{eq:gap}, and
	\eqref{eq:rhs-upperbound}, and using
	$\eta\leq2/L_{\psi,\rho}$, gives
	\[
	0\leq h(z^+)-h(z_\rho)
	\leq
	\frac{1}{2m_\rho}\|r_\eta(u)\|^2.
	\]
	Here, the lower bound follows from the optimality of $z_\rho$, and
	$h(z_\rho)=g_\rho(x,y)$.
\end{proof}

\begin{algorithm}[t]
	\small
	\setlength{\abovedisplayskip}{4pt}
	\setlength{\belowdisplayskip}{4pt}
	\caption{Inexact Moreau evaluation
		$\operatorname{ME}(w,z^{\rm init},\zeta)$}
	\label{alg:ME-evaluation}
	
	\noindent\textbf{Require.}
	A point $w=(x,y)\in C$, an initial point
	$z^{\rm init}\in Y$, and a tolerance $\zeta>0$.
	Set
	$
	\eta=L_{\psi,\rho}^{-1},$ 
	$z^0=z^{\rm init},$
	and 
	$j=0$.

	\smallskip
	\noindent\textbf{Step 1.}
	Compute
	$
	z^{j+1}
	=
	P_Y\!\left(
	z^j-\eta
	\left[
	\nabla_y g(x,z^j)
	+\rho^{-1}(z^j-y)
	\right]
	\right)
	$
	and set
	$
	r^j
	\coloneqq
	\frac{z^j-z^{j+1}}{\eta}.
	$
	
	\smallskip
	\noindent\textbf{Step 2.}
	If
	$
	\|r^j\|^2
	\leq
	2m_\rho\zeta,
	$
	set
	$\widetilde z\coloneqq z^{j+1}$
	and go to \textbf{Step 3}.
	Otherwise, set $j\leftarrow j+1$ and return to
	\textbf{Step 1}.
	
	\smallskip
	\noindent\textbf{Step 3.}
	Form
	$
	\widetilde R(w)
	=
	g(x,y)
	-g(x,\widetilde z)
	-\frac{1}{2\rho}\|\widetilde z-y\|^2
	-\delta.
	$
	Return $\widetilde z$ and $\widetilde R(w)$.
\end{algorithm}

\begin{lemma}[Complexity of the residual stopping rule]
	\label{lem:ME-residual-complexity}
	Let $\{z^j\}$ be generated by
	Algorithm~\ref{alg:ME-evaluation} for a fixed
	$w=(x,y)\in C$, and let
	$
	q_\rho
	\coloneqq
	1-\frac{1}{\kappa_{\psi,\rho}},
	$ where $\kappa_{\psi,\rho}\coloneqq \frac{L_{\psi,\rho}}{m_\rho}\in [1,+\infty]$.
	If $\kappa_{\psi,\rho}>1$, then there exists a constant
	$C_{\rm ME}>0$, independent of $\zeta$, such that
	\begin{equation}
		\|r^j\|^2
		\leq
		C_{\rm ME}q_\rho^{\,j},
		\qquad j\geq0.
		\label{eq:ME-residual-rlinear}
	\end{equation}
	Consequently, the stopping condition
	\eqref{eq:ME-stopping-rule} is satisfied after
	$
	O\!\left(
	\kappa_{\psi,\rho}
	\log\frac{1}{\zeta}
	\right)
	$
	iterations. Moreover, when $w$ ranges over $C$ and the
	initializations belong to $Y$, $C_{\rm ME}$ can be chosen
	uniformly.
\end{lemma}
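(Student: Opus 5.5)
We will derive the statement from the standard linear-convergence theory of projected gradient for smooth strongly convex functions, with $h=\psi_\rho(\cdot;x,y)$. Since $h$ is $L_{\psi,\rho}$-smooth and $m_\rho$-strongly convex on the compact convex set $Y$ (Proposition~\ref{prop:moreau-regularity}(i) and \eqref{eq:Lyy}), and $\eta=L_{\psi,\rho}^{-1}$, the objective gap $e_j\coloneqq h(z^j)-g_\rho(x,y)$ contracts as
\[
e_{j+1}\leq\Bigl(1-\tfrac{1}{\kappa_{\psi,\rho}}\Bigr)e_j=q_\rho e_j .
\]
This is the classical result for projected gradient under strong convexity with step $1/L$; see \citet{Beck17}, Theorem~10.29 or the analogous statement. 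We invoke it directly, so that $e_j\leq q_\rho^{\,j}e_0$. When $\kappa_{\psi,\rho}>1$ we have $q_\rho\in(0,1)$, so this bound is meaningful.

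Next we convert the gap bound into a bound on the gradient-mapping residual. The sufficient decrease property of a projected gradient step with $\eta=1/L_{\psi,\rho}$ gives
\[
h(z^{j+1})\leq h(z^j)-\tfrac{1}{2L_{\psi,\rho}}\|r^j\|^2 ,
\]
which is the descent lemma combined with the projection inequality. It follows that
\[
\|r^j\|^2\leq 2L_{\psi,\rho}\bigl(e_j-e_{j+1}\bigr)\leq 2L_{\psi,\rho}e_j\leq 2L_{\psi,\rho}e_0\,q_\rho^{\,j}.
\]
We therefore set $C_{\rm ME}\coloneqq 2L_{\psi,\rho}\,e_0$, with a small positive enlargement if $e_0=0$. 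This constant does not depend on $\zeta$.

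For uniformity, we need a uniform bound on $e_0=\psi_\rho(z^{\rm init};x,y)-g_\rho(x,y)$ over $w\in C$ and $z^{\rm init}\in Y$. Since $\psi_\rho$ is continuous in $(z,x,y)$ on the compact set $Y\times C$, it is bounded. Indeed, $e_0\leq 2\max_{Y\times C}|\psi_\rho|<\infty$, and alternatively $e_0\leq \max_C|g|\cdot 2+\operatorname{diam}(Y)^2/(2\rho)$. This gives a uniform choice of $C_{\rm ME}$.

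Finally, the stopping rule \eqref{eq:ME-stopping-rule} is guaranteed once $C_{\rm ME}q_\rho^{\,j}\leq 2m_\rho\zeta$, that is, once
\[
j\geq \log\!\bigl(C_{\rm ME}/(2m_\rho\zeta)\bigr)\big/\log(1/q_\rho).
\]
Using $\log(1/q_\rho)\geq 1/\kappa_{\psi,\rho}$, this number is $O(\kappa_{\psi,\rho}\log(1/\zeta))$.

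The main point needing care is the contraction $e_{j+1}\leq q_\rho e_j$ in the constrained setting. If we prove it directly rather than citing it, the route is as follows. Apply the inequality \eqref{eq:gap} from Lemma~\ref{lem:ME-residual-stopping} with $u=z^j$, using $h(z^{j+1})-h(z_\rho)\leq\frac{1}{2m_\rho}\|r^j\|^2$. Combine it with the decrease inequality $\|r^j\|^2\leq 2L_{\psi,\rho}(e_j-e_{j+1})$. This yields $e_{j+1}\leq\kappa_{\psi,\rho}(e_j-e_{j+1})$, that is, $e_{j+1}\leq \frac{\kappa_{\psi,\rho}}{1+\kappa_{\psi,\rho}}e_j$. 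That rate is slightly weaker than $q_\rho$ but is still linear with the same $O(\kappa_{\psi,\rho}\log(1/\zeta))$ complexity. If the exact factor $q_\rho$ is required, we cite the textbook bound.
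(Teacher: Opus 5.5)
Your proof is correct in substance but takes a different route from the paper.

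The paper works with iterates. Beck's Theorem~10.29(b) gives $\|z^j-z_\rho\|^2\le q_\rho^{\,j}\|z^{\rm init}-z_\rho\|^2$. Then $\|r^j\|=L_{\psi,\rho}\|z^j-z^{j+1}\|\le L_{\psi,\rho}(\|z^j-z_\rho\|+\|z^{j+1}-z_\rho\|)$ yields $C_{\rm ME}=4L_{\psi,\rho}^2\|z^{\rm init}-z_\rho\|^2\le 4L_{\psi,\rho}^2D_Y^2$. Uniformity thus comes directly from the diameter of $Y$, and no function values are needed.

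You work with objective gaps instead. Sufficient decrease gives $\|r^j\|^2\le 2L_{\psi,\rho}e_j$, so your constant $C_{\rm ME}=2L_{\psi,\rho}e_0$ is tied to the initial gap, which better reflects warm starts. The price is a separate compactness bound on $e_0$, which you supply correctly.

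The one point to fix is the justification of $e_{j+1}\le q_\rho e_j$. Beck's Theorem~10.29 does not state this. It contracts $\|z^j-z_\rho\|^2$, and the value bound it yields is $e_j\le\frac{L_{\psi,\rho}}{2}q_\rho^{\,j}\|z^{\rm init}-z_\rho\|^2$ for $j\ge1$. You could feed that bound directly into your sufficient-decrease step.

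Alternatively, prove the one-step contraction with the exact factor. Rerun the derivation of \eqref{eq:gap} with $\eta=L_{\psi,\rho}^{-1}$, $u=z^j$, and an arbitrary comparison point $v\in Y$ in place of $z_\rho$. This gives $h(z^{j+1})\le h(v)+\frac{L_{\psi,\rho}-m_\rho}{2}\|v-z^j\|^2$. Now take $v=z^j+\kappa_{\psi,\rho}^{-1}(z_\rho-z^j)$ and use $m_\rho$-strong convexity of $h$ along $[z^j,z_\rho]$. The quadratic terms cancel exactly, and $e_{j+1}\le q_\rho e_j$ follows.

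Your fallback factor $\kappa_{\psi,\rho}/(1+\kappa_{\psi,\rho})$ gives the same iteration count. It does not, however, give \eqref{eq:ME-residual-rlinear} with $q_\rho$ as stated.
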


\begin{proof}
	Let $z_\rho=z_\rho(w)$. Note that $h$ is $m_\rho$-strongly convex with
	$L_{\psi,\rho}$-Lipschitz continuous gradient. Applying
	\citet[Theorem~10.29(b)]{Beck17} to
	$h+\iota_Y$, with the constant stepsize
	$\eta=L_{\psi,\rho}^{-1}$, gives
	\begin{equation}
		\|z^j-z_\rho\|^2
		\leq
		q_\rho^{\,j}
		\|z^{\rm init}-z_\rho\|^2,
		\qquad j\geq0.
		\label{eq:distance-to-solution}
	\end{equation}

	Since
	$
	r^j
	=
	\eta^{-1}(z^j-z^{j+1})
	$
	and $\eta=L_{\psi,\rho}^{-1}$, the triangle inequality and
	\eqref{eq:distance-to-solution} yield
	\begin{align*}
		\|r^j\|
		&=
		L_{\psi,\rho}\|z^j-z^{j+1}\|\\
		&\leq
		L_{\psi,\rho}
		\bigl(
		\|z^j-z_\rho\|
		+
		\|z^{j+1}-z_\rho\|
		\bigr)\\
		&\leq
		L_{\psi,\rho}
		(1+\sqrt{q_\rho})
		q_\rho^{\,j/2}
		\|z^{\rm init}-z_\rho\|\\
		&\leq
		2L_{\psi,\rho}
		q_\rho^{\,j/2}
		\|z^{\rm init}-z_\rho\|.
	\end{align*}
	Hence,
	$
	\|r^j\|^2
	\leq
	4L_{\psi,\rho}^2
	\|z^{\rm init}-z_\rho\|^2
	q_\rho^{\,j},
	$
	which proves \eqref{eq:ME-residual-rlinear} with
	\[
	C_{\rm ME}
	=
	4L_{\psi,\rho}^2
	\|z^{\rm init}-z_\rho\|^2.
	\]
	
	For $\kappa_{\psi,\rho}>1$,
	$
	q_\rho^{\,j}
	=
	\left(
	1-\frac{1}{\kappa_{\psi,\rho}}
	\right)^j
	\leq
	\exp\!\left(
	-\frac{j}{\kappa_{\psi,\rho}}
	\right).
	$
	Therefore,
	$\|r^j\|^2\leq2m_\rho\zeta$ after
	$
	O(\kappa_{\psi,\rho}\log(1/\zeta))
	$
	iterations.
	
	Finally, let $D_Y\coloneqq\operatorname{diam}(Y)$. Since
	$z^{\rm init},z_\rho\in Y$,
	$
	\|z^{\rm init}-z_\rho\|\leq D_Y.
	$
	Thus,
	$
	C_{\rm ME}
	\leq
	4L_{\psi,\rho}^2D_Y^2,
	$
	which is independent of $w$, $z^{\rm init}$, and $\zeta$.
	Consequently, $C_{\rm ME}$ can be chosen uniformly over all
	Moreau evaluations.
\end{proof}

Combining Lemmas~\ref{lem:ME-residual-stopping} and
\ref{lem:ME-residual-complexity},
Algorithm~\ref{alg:ME-evaluation} terminates after
$O(\kappa_{\psi,\rho}\log(1/\zeta))$ projected-gradient iterations
and returns a point satisfying \eqref{eq:oracle-gap}. The complete
implementation of IVSP using Algorithm~\ref{alg:ME-evaluation} is
given in Algorithm~\ref{alg:inexact-penalty-smoothing}.

\begin{algorithm}[t!]
	\small
	\setlength{\abovedisplayskip}{4pt}
	\setlength{\belowdisplayskip}{4pt}
	\caption{Detailed implementation of Algorithm~\ref{alg:main}}
	\label{alg:inexact-penalty-smoothing}
	
	\noindent\textbf{Require.}
	Constants $c_1,c_2,a_\phi,\underline{\vartheta}>0$, $\hat{\vartheta}\in (0,2)$, an initial point $w^0\in C$,
	an initial proximal point $z_{\rm init}^0\in Y$,
	positive nonincreasing sequences $(\mu_k)_{k\ge0}$ and
	$(\zeta_k)_{k\ge0}$, and a strictly decreasing positive
	inverse-penalty grid $(\widehat\gamma_t)_{t\ge0}$.
	Choose either:
	\emph{Strategy I}, a fixed parameter $L>0$; or
	\emph{Strategy II}, constants
	$0<L_{\min}\leq L_{\max}<\infty$ for backtracking.
	Set $t_0=0$ and $k=0$.
	
	\smallskip
	\noindent\textbf{Step 0.}
	Compute the initial $\zeta_0$-accurate Moreau evaluation
	$
	(\widetilde z_0^0,
	\widetilde R_0^0)
	=
	\operatorname{ME}
	(w^0,z_{\rm init}^0,\zeta_0).
	$ Form
	\[
	\widetilde\nabla R^0_0
	=
	\begin{pmatrix}
		\nabla_x g(x^0,y^0)-\nabla_x g(x^0,\widetilde z^0_0)\\
		\nabla_y g(x^0,y^0)-\rho^{-1}(y^0-\widetilde z^0_0)
	\end{pmatrix}.
	\]
	
	\smallskip
	\noindent\textbf{Step 1.}
	Set
	$
	\gamma_k=\widehat\gamma_{t_k},
	$ $E_k^0\coloneqq E_{\mu_k,\rho}(\zeta_k),$ and $
	\widetilde\nabla G_k^0
	\coloneqq
	\phi_{\mu_k}'(\widetilde R_k^0)
	\widetilde\nabla R_k^0.
	$
	Under Strategy I, set $L$ to the prescribed fixed value.
	Under Strategy II, choose
	$L_{k,0}\in[L_{\min},L_{\max}]$ and set $L=L_{k,0}$.
	
	\smallskip
	\noindent\textbf{Step 2.}
	Compute the trial point
	\begin{equation}
		\label{eq:inexact-step}
		w_L^k
		=
		P_C\left(
		w^k-\frac{\mu_k}{L}
		\left[
		\gamma_k\nabla f(w^k)
		+\widetilde\nabla G_k^0
		\right]
		\right).
	\end{equation}
	
	\smallskip
	\noindent\textbf{Step 3.}
	Using $\widetilde z_k^0$ as a warm start, compute a
	$\zeta_{k+1}$-accurate Moreau evaluation at the trial point:
	$
	(\widetilde z_{k,L}^+,
	\widetilde R_{k,L}^+)
	=
	\operatorname{ME}
	(w_L^k,\widetilde z_k^0,\zeta_{k+1}).
	$
	Under Strategy II, define
	\[
	\tau_{k,L}\coloneqq
	\frac{\mu_k}{2\vartheta_{k,L}}(E_k^0)^2+\zeta_k+\zeta_{k+1},
	\]
	where $	\vartheta_{k,L}
	\coloneqq
	\max\left\{
	\min\left\{
	\frac{\mu_kE_k^0}{\|w^k_L-w^k\|}, 
	\hat{\vartheta}L
	\right\},
	\underline{\vartheta}
	\right\}$, where the ratio is interpreted as $+\infty$ when
	$w_L^k=w^k$.
	
	\smallskip
	\noindent\textbf{Step 4.}
	Under Strategy I, set
	$
	L_k\coloneqq L,$
	$w^{k+1}\coloneqq w_L^k,
	$
	and go to \textbf{Step 5}.
	Under Strategy II, if
	\begin{align}
		\label{eq:inexact-LS}
		\gamma_k f(w_L^k)
		+
		\phi_{\mu_k}
		(\widetilde R_{k,L}^++\zeta_{k+1})
		&\leq
		\gamma_k f(w^k)
		+
		\phi_{\mu_k}(\widetilde R_k^0) 
		-\frac{c_1}{2\mu_k}
		\|w_L^k-w^k\|^2
		+\tau_{k,L}
	\end{align}
	does not hold, set $L\leftarrow2L$ and return to
	\textbf{Step 2}.
	Otherwise, set
	$
	L_k\coloneqq L,$
	$w^{k+1}\coloneqq w_L^k,
	$
	and go to \textbf{Step 5}.

	\smallskip 
	\noindent\textbf{Step 5.}
	Set
	$
	\widetilde z_k^+
	\coloneqq
	\widetilde z_{k,L_k}^+,$ and
	$\widetilde R_k^+
	\coloneqq
	\widetilde R_{k,L_k}^+.
	$
	Then form
	\[
	\widetilde\nabla R_k^+
	=
	\begin{pmatrix}
		\nabla_x g(x^{k+1},y^{k+1})
		-\nabla_x g(x^{k+1},\widetilde z_k^+)\\
		\nabla_y g(x^{k+1},y^{k+1})
		-\rho^{-1}(y^{k+1}-\widetilde z_k^+)
	\end{pmatrix}.
	\]
	Set
	$
	d^k\coloneqq w^{k+1}-w^k,$
	and 
	$\omega_{k+1}\coloneqq\frac{\|d^k\|}{\mu_k}.
	$
	
	\smallskip
	\noindent\textbf{Step 6.}
	If
	\begin{equation}
		\label{eq:inexact-PU}
		\widetilde R_k^+>2a_\phi\mu_k
		\qquad\text{and}\qquad
		\|d^k\|\leq c_2\mu_k\gamma_k,
	\end{equation}
	set $t_{k+1}=t_k+1$; otherwise, set $t_{k+1}=t_k$.
	
	\smallskip
	\noindent\textbf{Step 7.}
	Retain the accepted Moreau evaluation as the current evaluation
	for the next outer iteration:
	\[
	\widetilde z_{k+1}^0
	\coloneqq
	\widetilde z_k^+,
	\qquad
	\widetilde R_{k+1}^0
	\coloneqq
	\widetilde R_k^+,
	\qquad
	\widetilde\nabla R_{k+1}^0
	\coloneqq
	\widetilde\nabla R_k^+.
	\]
	Update $k\leftarrow k+1$ and return to \textbf{Step 1}.
\end{algorithm}

\section{Proofs of the theoretical results}
\label{app:basic-convergence}
We recall that our Lyapunov function introduced in Section~\ref{sec:basic-convergence} is given by 
\begin{equation}
	Q_k
	=
	\gamma_k(f(w^k)-f_{\inf})
	+
	G_{\mu_k}(w^k)
	+
	c_\phi\mu_k.
	\label{eq:lyapunov}
\end{equation}
\subsection{Lemma~\ref{lem:descent}: Well-definedness of line search and descent inequality}
\label{app:lem-descent}

The following is the complete statement of
Lemma~\ref{lem:descent}, including the constants used in the analysis.

\medskip
\noindent\textbf{Lemma~\ref{lem:descent} (complete statement).}
\textit{Suppose Assumption~\ref{assume:A} holds. Let
	$\hat{\vartheta}\in(0,2)$ and $\underline{\vartheta}>0$.
	For each iteration $k\geq0$ and each trial parameter $L>0$, define
	$
	d_{k,L}
	\coloneqq
	w_L^k-w^k$,
	$E_k^0
	\coloneqq
	E_{\mu_k,\rho}(\zeta_k),$
	and set $	\vartheta_{k,L}
	\coloneqq
	\max\left\{
	\min\left\{
	\frac{\mu_kE_k^0}{\|d_{k,L}\|},
	\hat{\vartheta}L
	\right\},
	\underline{\vartheta}
	\right\},$
	where the ratio is interpreted as $+\infty$ when $d_{k,L}=0$.
	Define $		\tau_{k,L}
	\coloneqq
	\frac{\mu_k}{2\vartheta_{k,L}}(E_k^0)^2
	+\zeta_k+\zeta_{k+1}.$
	Furthermore, let
	\begin{equation}
		L_{\rm safe}
		\coloneqq
		\max\left\{
		\frac{\underline{\vartheta}}{\hat{\vartheta}},
		\frac{
			c_1+\widehat\gamma_0\mu_0L_f+L_G
		}{
			2-\hat{\vartheta}
		}
		\right\}.
		\notag
	\end{equation}
	Then, for every $k\geq0$, the following hold.}

\begin{enumerate}[before=\itshape]
	\item[(i)]
	Every trial parameter $L\geq L_{\rm safe}$ satisfies~\eqref{eq:inexact-LS}, i.e., 
	\begin{align}
		\gamma_kf(w_L^k)
		+
		\phi_{\mu_k}
		\bigl(\widetilde R_{k,L}^++\zeta_{k+1}\bigr)
		&\leq
		\gamma_kf(w^k)
		+
		\phi_{\mu_k}(\widetilde R_k^0)
		\notag\\
		&\quad
		-\frac{c_1}{2\mu_k}
		\|w_L^k-w^k\|^2
		+\tau_{k,L}.
		\label{eq:armijo-safe-app}
	\end{align}
	Consequently, any fixed $L\geq L_{\rm safe}$ is admissible
	under Strategy I. Under Strategy II, the backtracking procedure
	terminates finitely, and the accepted parameter satisfies
	\begin{equation}
		L_k\leq
		\overline L
		\coloneqq
		\max\{L_{\max},2L_{\rm safe}\}.
		\label{eq:Lbar}
	\end{equation}
	Furthermore, the number of trial evaluations at each iteration
	is bounded by
	\begin{equation}
		B_{\rm ls}
		\coloneqq
		1+
		\max\left\{
		0,
		\left\lceil
		\log_2\frac{L_{\rm safe}}{L_{\min}}
		\right\rceil
		\right\}.
		\label{eq:B-ls}
	\end{equation}
	
	\item[(ii)]
	Let $L_k=L$ under Strategy I, and let $L_k$ denote the accepted
	parameter under Strategy II. Set
	$
	w^{k+1}\coloneqq w_{L_k}^k,$ $d^k\coloneqq w^{k+1}-w^k,$ and $\omega_{k+1}\coloneqq\frac{\|d^k\|}{\mu_k}$. Moreover, let $
	\vartheta_k
	\coloneqq
	\vartheta_{k,L_k}$ and $\tau_k
	\coloneqq
	\tau_{k,L_k}.$
	Then, under either strategy,
	\begin{equation}
		Q_{k+1}
		\leq
		Q_k
		-\frac{c_1}{2}\mu_k\omega_{k+1}^2
		+\tau_k.
		\label{eq:Lyapunov-descent-app}
	\end{equation}
\end{enumerate}

\begin{proof}[Proof of Lemma~\ref{lem:descent}]
	Fix an iteration $k$ and a trial parameter $L>0$, and let
	$
	d\coloneqq d_{k,L}=w_L^k-w^k.
	$
	Using the optimality condition for the projected step
	\eqref{eq:inexact-step} and noting that $w^k\in C$, we obtain
	\begin{equation}
		\gamma_k
		\langle\nabla f(w^k),d\rangle
		+
		\langle\widetilde\nabla G_k^0,d\rangle
		\leq
		-\frac{L}{\mu_k}\|d\|^2.
		\notag
	\end{equation}
	Adding and subtracting $\nabla G_{\mu_k}(w^k)$ yields
	\begin{align}
		&
		\gamma_k
		\langle\nabla f(w^k),d\rangle
		+
		\langle\nabla G_{\mu_k}(w^k),d\rangle\leq
		-\frac{L}{\mu_k}\|d\|^2
		+
		\left\langle
		\nabla G_{\mu_k}(w^k)
		-\widetilde\nabla G_k^0,
		d
		\right\rangle.
		\label{eq:VI-exact-gradient}
	\end{align}
	Meanwhile, by the Cauchy--Schwarz inequality,
	Lemma~\ref{lem:inexact-errors}(iv), and Young's inequality,
	\begin{align}
		\left\langle
		\nabla G_{\mu_k}(w^k)
		-\widetilde\nabla G_k^0,
		d
		\right\rangle
		&\leq
		\left\|
		\nabla G_{\mu_k}(w^k)
		-\widetilde\nabla G_k^0
		\right\|\|d\|
		\notag\\
		&\leq
		E_k^0\|d\|
		\notag\\
		&\leq
		\frac{\vartheta_{k,L}}{2\mu_k}\|d\|^2
		+
		\frac{\mu_k}{2\vartheta_{k,L}}(E_k^0)^2.
		\label{eq:grad-cauchy-young}
	\end{align}
	Since $\nabla f$ is $L_f$-Lipschitz and, by
	Lemma~\ref{lem:G-lip}, $\nabla G_{\mu_k}$ is
	$(L_G/\mu_k)$-Lipschitz, we have
	(see \cite[Lemma~5.7]{Beck17})
	\begin{align*}
		f(w^k+d)
		&\leq
		f(w^k)
		+\langle\nabla f(w^k),d\rangle
		+\frac{L_f}{2}\|d\|^2,\\
		G_{\mu_k}(w^k+d)
		&\leq
		G_{\mu_k}(w^k)
		+\langle\nabla G_{\mu_k}(w^k),d\rangle
		+\frac{L_G}{2\mu_k}\|d\|^2.
	\end{align*}
	Multiplying the first inequality by $\gamma_k$, adding the second,
	and using~\eqref{eq:VI-exact-gradient}--%
	\eqref{eq:grad-cauchy-young} gives
	\begin{align}
		&
		\gamma_kf(w^k+d)+G_{\mu_k}(w^k+d)
		\notag\\
		&\quad\leq
		\gamma_kf(w^k)+G_{\mu_k}(w^k)
		-
		\frac{
			2L-\gamma_k\mu_kL_f-L_G-\vartheta_{k,L}
		}{2\mu_k}
		\|d\|^2
		+\chi_{k,L},
		\label{eq:model-descent-inexact}
	\end{align}
	where
	$
	\chi_{k,L}
	\coloneqq
	\frac{\mu_k}{2\vartheta_{k,L}}(E_k^0)^2.
	$
	
	Now, suppose that $L\geq L_{\rm safe}$. By the definition of $L_{\rm safe}$,
	we have $
	L\geq\frac{\underline{\vartheta}}{\hat{\vartheta}},
	$
	and therefore
	$
	\underline{\vartheta}
	\leq
	\hat{\vartheta}L.
	$
	Since $\min\left\{
	\frac{\mu_kE_k^0}{\|d_{k,L}\|}, \hat{\vartheta}L\right\}$ is also bounded above by $\hat{\vartheta}L$, it follows that $\vartheta_{k,L} \leq \hat{\vartheta}L$. 
	Since
	$\gamma_k\leq\widehat\gamma_0$ and
	$\mu_k\leq\mu_0$, we obtain
	\begin{align*}
		2L-\gamma_k\mu_kL_f-L_G-\vartheta_{k,L}
		&\geq
		(2-\hat{\vartheta})L
		-\widehat\gamma_0\mu_0L_f-L_G\\
		&\geq
		c_1,
	\end{align*}
	where the last inequality follows from the definition of
	$L_{\rm safe}$. Hence, using \eqref{eq:model-descent-inexact}, we have for all $L\geq L_{\rm safe}$ that 
	\begin{equation}
		\gamma_kf(w_L^k)+G_{\mu_k}(w_L^k)
		\leq
		\gamma_kf(w^k)+G_{\mu_k}(w^k)
		-\frac{c_1}{2\mu_k}\|w_L^k-w^k\|^2
		+\chi_{k,L}.
		\label{eq:model-descent-safe}
	\end{equation}
	
	We next relate the exact values in
	\eqref{eq:model-descent-safe} to the quantities used in the
	inexact Armijo test. At the trial point $w_L^k$, we obtain the
	$\zeta_{k+1}$-accurate minimizer of $\psi_\rho (\cdot,w_L^k)$. Applying \eqref{eq:phi-value-error} and \eqref{eq:merit-error-bound} in 	Lemma~\ref{lem:inexact-errors}, respectively, we obtain
	\begin{align}
		&
		\gamma_kf(w_L^k)
		+
		\phi_{\mu_k}
		\bigl(\widetilde R_{k,L}^++\zeta_{k+1}\bigr)\leq
		\gamma_kf(w_L^k)
		+
		G_{\mu_k}(w_L^k)
		+\zeta_{k+1}.
		\label{eq:trial-merit-upper}
	\end{align}
	On the other hand, at the current point $w^k$, the $\zeta_k$-accurate solution gives
	\begin{equation}
		\gamma_kf(w^k)+G_{\mu_k}(w^k)
		\leq
		\gamma_kf(w^k)
		+
		\phi_{\mu_k}(\widetilde R_k^0)
		+\zeta_k,
		\label{eq:current-merit-upper}
	\end{equation}
	by using \eqref{eq:merit-error-bound} and \eqref{eq:phi-value-error}, respectively. 
	Combining
	\eqref{eq:model-descent-safe}, \eqref{eq:trial-merit-upper}, and
	\eqref{eq:current-merit-upper}, and noting that
	$
	\tau_{k,L}
	=
	\chi_{k,L}+\zeta_k+\zeta_{k+1},
	$
	yields~\eqref{eq:armijo-safe-app}.
	Thus every $L\geq L_{\rm safe}$ satisfies the inexact Armijo
	condition.
	
	Under Strategy I, the prescribed fixed parameter is therefore
	valid whenever $L\geq L_{\rm safe}$. Under Strategy II, $L$ is
	doubled until the inexact Armijo condition holds, so the
	backtracking procedure terminates finitely. Since
	$L_{k,0}\in[L_{\min},L_{\max}]$, every accepted parameter satisfies \eqref{eq:Lbar}. 
	Moreover, the number of trial evaluations at any iteration is at
	most $	B_{\rm ls}$ as given in~\eqref{eq:B-ls}. This completes the proof of (i).

	We next prove (ii). Note from \eqref{eq:merit-error-bound} that 
	\[
	\phi_{\mu_k}(\widetilde R_k^0)
	\leq
	G_{\mu_k}(w^k) \qquad \text{and} \qquad G_{\mu_k}(w^{k+1})
	\leq
	\phi_{\mu_k}
	\bigl(\widetilde R_k^++\zeta_{k+1}\bigr).
	\]
	Together with \eqref{eq:armijo-safe-app}, we obtain\footnote{We note that under Strategy I,~\eqref{eq:accepted-merit-descent} follows
		directly from~\eqref{eq:model-descent-safe}; in fact, in this
		case the stronger estimate with
		$
		\chi_k
		\coloneqq
		\frac{\mu_k}{2\vartheta_k}(E_k^0)^2
		$
		in place of $\tau_k$ holds.}
	\begin{equation}
		\gamma_kf(w^{k+1})+G_{\mu_k}(w^{k+1})
		\leq
		\gamma_kf(w^k)+G_{\mu_k}(w^k)
		-\frac{c_1}{2\mu_k}\|d^k\|^2
		+\tau_k.
		\label{eq:accepted-merit-descent}
	\end{equation}
	Meanwhile, since $\gamma_{k+1}\leq\gamma_k$ and
	$f(w^{k+1})-f_{\inf}\geq0$,
	\begin{equation}
		\gamma_{k+1}
		\bigl(f(w^{k+1})-f_{\inf}\bigr)
		\leq
		\gamma_k
		\bigl(f(w^{k+1})-f_{\inf}\bigr).
		\label{eq:f-part}
	\end{equation}
	Moreover, inequality~\eqref{eq:G-changing-mu} in
	Lemma~\ref{lem:G-lip} gives
	\begin{equation}
		G_{\mu_{k+1}}(w^{k+1})
		+
		c_\phi\mu_{k+1}
		\leq
		G_{\mu_k}(w^{k+1})
		+
		c_\phi\mu_k.
		\label{eq:G-mu-part}
	\end{equation}
	Combining \eqref{eq:f-part} and \eqref{eq:G-mu-part} with
	\eqref{eq:accepted-merit-descent}, and using
	$\|d^k\|=\mu_k\omega_{k+1}$ and the definition of $Q_k$ in \eqref{eq:lyapunov}, we obtain~\eqref{eq:Lyapunov-descent-app}.
%
\end{proof}

\subsection{Lemma~\ref{lem:penalty-stabilization}: Finite penalty stabilization}
\label{app:penalty-stabilization}

To prove Lemma~\ref{lem:penalty-stabilization}, we need the following two lemmas. 
\begin{lemma}[Uniform ENNAMCQ margin]
	\label{lem:ennamcq-margin}
	Suppose that $C$ is nonempty, compact, and convex, and that
	$R{}$ is continuously differentiable on $C$. If ENNAMCQ holds
	for~\eqref{eq:relaxed-ME} and
	$
	\{w\in C:R{}(w)\geq0\}\neq\varnothing,
	$
	then
	\begin{equation}
		\eta_\delta
		\coloneqq
		\min_{\substack{w\in C\\R{}(w)\geq0}}
		\dist\!\left(
		0,\nabla R{}(w)+N_C(w)
		\right)
		>0.
		\label{eq:eta-delta-app}
	\end{equation}
\end{lemma}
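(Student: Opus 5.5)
The plan is a compactness argument combined with lower semicontinuity of $w\mapsto\dist(0,\nabla R(w)+N_C(w))$ on $C$. The set $S:=\{w\in C:R(w)\geq0\}$ is closed, because $R$ is continuous (it is continuously differentiable by Proposition~\ref{prop:moreau-regularity}(iv)), and it is a subset of the compact set $C$. Hence $S$ is nonempty and compact. ENNAMCQ says that the function
\[
\Psi(w)\coloneqq\dist\!\left(0,\nabla R(w)+N_C(w)\right)
\]
is strictly positive at every $w\in S$. It is positive rather than merely nonzero-valued because $N_C(w)$ is a closed convex cone, so $\nabla R(w)+N_C(w)$ is closed and contains $0$ only if the distance is zero. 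So it suffices to show that $\Psi$ attains its infimum over $S$, and for that I will prove that $\Psi$ is lower semicontinuous on $C$.

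\textbf{Lower semicontinuity.} Take $w^j\to\bar w$ in $C$ with $\liminf_j\Psi(w^j)=\alpha<\infty$. Pass to a subsequence realizing the liminf. For each $j$, pick $v^j\in N_C(w^j)$ attaining the distance, which is possible since the set is closed, so that $\|\nabla R(w^j)+v^j\|=\Psi(w^j)$. Because $\nabla R$ is bounded by $B_R$ on $C$, the sequence $\{v^j\}$ is bounded, and after a further subsequence $v^j\to\bar v$. The graph of $N_C$ is closed for closed convex $C$: the limit of $\langle v^j,u-w^j\rangle\le0$ gives $\langle\bar v,u-\bar w\rangle\le 0$ for all $u\in C$, so $\bar v\in N_C(\bar w)$. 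Continuity of $\nabla R$ then gives
\[
\Psi(\bar w)\le\|\nabla R(\bar w)+\bar v\|=\lim_j\Psi(w^j)=\alpha .
\]

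\textbf{Conclusion.} A lower semicontinuous function attains its minimum on the nonempty compact set $S$, say at $w^\ast$. Therefore $\eta_\delta=\Psi(w^\ast)>0$ by ENNAMCQ. This also justifies writing ``min'' in \eqref{eq:eta-delta-app}.

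The only delicate step is the outer semicontinuity of the normal cone map, together with the boundedness of the chosen $v^j$. The boundedness comes from $\|v^j\|\le\Psi(w^j)+B_R$, and the graph closedness is the standard argument above. I do not expect any other obstacle.
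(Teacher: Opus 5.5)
Your proof is correct and is essentially the paper's argument: the paper takes a minimizing sequence in $\{w\in C:R(w)\geq0\}$ with $n^j\in N_C(w^j)$, uses boundedness of $\nabla R$ to extract a convergent subsequence of $n^j$, and invokes closedness of the graph of $N_C$ to show the infimum is attained, which is your lower-semicontinuity-plus-Weierstrass argument unpacked. One small wording slip: the direction you need from closedness of $\nabla R(w)+N_C(w)$ is that $\dist(0,\nabla R(w)+N_C(w))=0$ forces $0\in\nabla R(w)+N_C(w)$, not the converse as you phrased it (a point the paper itself leaves implicit).
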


\begin{proof}
	Let
	$
	\eta
	\coloneqq
	\inf_{\substack{w\in C\\R(w)\geq0}}
	\dist\!\left(
	0,\nabla R(w)+N_C(w)
	\right).
	$
	Since $R$ is continuous, the set
	$
	\{w\in C:R(w)\geq0\}
	$
	is compact. Choose a sequence $\{w^j\}$ in this set and
	$n^j\in N_C(w^j)$ such that
	$
	\|\nabla R(w^j)+n^j\|\to\eta.
	$
	By compactness,  we may assume without loss of generality that
	$
	w^j\to\bar w,
	$
	where $\bar w\in C$ and $R(\bar w)\geq0$. It follows that 
	\begin{equation}
		 \eta \leq 	\dist\!\left(
		0,\nabla R(\bar w)+N_C(\bar w)
		\right)
		\label{eq:eta-leq-dist}
	\end{equation}
	
	Meanwhile, since $\nabla R$ is continuous and $C$ is compact,
	$\{\nabla R(w^j)\}$ is bounded. Moreover,
	since $\{\nabla R(w^j)+n^j\}$ is bounded,  $\{n^j\}$ is also bounded. Therefore, there exists a
	subsequence $\{j_r\}$ and some $\bar n$ such that
	$
	n^{j_r}\to\bar n.
	$
	Since $C$ is closed and convex, the normal cone mapping
	$N_C$ has closed graph by
	\citet[Proposition~6.6]{rockafellar1998variational}, and therefore
	$
	\bar n\in N_C(\bar w).
	$ Consequently, 
	\[
	\dist\!\left(
	0,\nabla R(\bar w)+N_C(\bar w)
	\right)
	\leq
	\|\nabla R(\bar w)+\bar n\|
	=
	\lim_{r\to\infty}
	\|\nabla R(w^{j_r})+n^{j_r}\| =
	\eta,
	\]
	where the first equality holds by continuity of $\nabla R$. Together with \eqref{eq:eta-leq-dist}, we have 
	\[
	\eta
	=
	\dist\!\left(
	0,\nabla R(\bar w)+N_C(\bar w)
	\right).
	\]
	Thus the infimum is attained at $\bar w$ so that $\eta_\delta = \eta$. Finally, ENNAMCQ at
	$\bar w$ implies that $\dist\!\left(
	0,\nabla R(\bar w)+N_C(\bar w)
	\right)>0$, and so $\eta_\delta>0$. 
\end{proof}

\begin{lemma}[Stationarity bound at a penalty update]
	\label{lem:update-stationarity}
	Let $\overline L>0$ be given by
	\begin{equation}
		\overline L
		=
		\begin{cases}
			L, & \text{under Strategy I},\\[1mm]
			\max\{L_{\max},2L_{\rm safe}\},
			& \text{under Strategy II},
		\end{cases}
		\label{eq:overline-L}
	\end{equation}
	with $L\geq L_{\rm safe}$. 
	Define
	$
	E_k^0
	\coloneqq
	E_{\mu_k,\rho}(\zeta_k)$,
	$E_k^+
	\coloneqq
	E_{\mu_k,\rho}(\zeta_{k+1}),$
	$
	D_k^+
	\coloneqq
	D_\rho(\zeta_{k+1}),
	$
	and
	$
	A_0
	\coloneqq
	M_f+c_2(\overline L+L_G),
	$
	where $M_f \coloneqq \max_{w\in C} \norm{\nabla f(w)}$.
	If a penalty update is triggered at iteration $k$, then
	\begin{align}
		R{}(w^{k+1})& >0, \qquad \text{and}
		\label{eq:update-exact-positive} \\
		\dist\!\left(
		0,\nabla R{}(w^{k+1})+N_C(w^{k+1})
		\right)
		& \leq
		2A_0\gamma_k
		+2(E_k^0+E_k^+)
		+D_k^+.
		\label{eq:update-stationarity-bound}
	\end{align}
\end{lemma}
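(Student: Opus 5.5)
The first claim is immediate. The update rule \eqref{eq:inexact-PU} gives $\widetilde R_k^+>2a_\phi\mu_k>0$. By \eqref{eq:app-residual-error} of Lemma~\ref{lem:inexact-errors}(ii) applied at $w^{k+1}$ with accuracy $\zeta_{k+1}$, we have $R(w^{k+1})\geq\widetilde R_k^+>0$, which proves \eqref{eq:update-exact-positive}.

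For \eqref{eq:update-stationarity-bound}, I would start from the optimality condition of the projected step \eqref{eq:inexact-step}. Since $w^{k+1}=P_C(\cdot)$, it gives
\[
-\tfrac{L_k}{\mu_k}d^k-\gamma_k\nabla f(w^k)-\widetilde\nabla G_k^0\in N_C(w^{k+1}).
\]
Set $\beta\coloneqq\phi_{\mu_k}'(\widetilde R_k^+)$. Because $\widetilde R_k^+>2a_\phi\mu_k$, Lemma~\ref{lem:phi-products}(i) with $\theta=2$ gives $\beta\geq 1/2$. Dividing the inclusion by $\beta$ (normal cones are cones), we get that $\nabla R(w^{k+1})+N_C(w^{k+1})$ contains
\[
\nabla R(w^{k+1})-\beta^{-1}\Bigl[\tfrac{L_k}{\mu_k}d^k+\gamma_k\nabla f(w^k)+\widetilde\nabla G_k^0\Bigr].
\]
The distance is therefore at most $\beta^{-1}$ times the norm of
\[
\beta\nabla R(w^{k+1})-\widetilde\nabla G_k^0-\tfrac{L_k}{\mu_k}d^k-\gamma_k\nabla f(w^k),
\]
and $\beta^{-1}\leq 2$.

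Next I would split $\beta\nabla R(w^{k+1})-\widetilde\nabla G_k^0$ by passing through exact gradients:
\[
\beta\nabla R(w^{k+1})-\widetilde\nabla G_k^+
\;+\;\widetilde\nabla G_k^+-\nabla G_{\mu_k}(w^{k+1})
\;+\;\nabla G_{\mu_k}(w^{k+1})-\nabla G_{\mu_k}(w^k)
\;+\;\nabla G_{\mu_k}(w^k)-\widetilde\nabla G_k^0 .
\]
The terms are bounded as follows.
\begin{itemize}
\item The first term equals $\beta(\nabla R(w^{k+1})-\widetilde\nabla R_k^+)$. It is bounded by $D_k^+$ using Lemma~\ref{lem:inexact-errors}(iii) and $\beta\leq1$.
\item The second and fourth terms are bounded by $E_k^+$ and $E_k^0$ by Lemma~\ref{lem:inexact-errors}(iv).
\item The third term is bounded by $(L_G/\mu_k)\|d^k\|$ by Lemma~\ref{lem:G-lip}.
\item The step term satisfies $\|(L_k/\mu_k)d^k\|\leq \overline L\,\|d^k\|/\mu_k$, with $L_k\leq\overline L$ from Lemma~\ref{lem:descent}.
\item The objective term satisfies $\gamma_k\|\nabla f(w^k)\|\leq M_f\gamma_k$.
\end{itemize}
The second update condition gives $\|d^k\|/\mu_k\leq c_2\gamma_k$. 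Summing, the bracket is bounded by
\[
(M_f+c_2(\overline L+L_G))\gamma_k+E_k^0+E_k^++D_k^+ = A_0\gamma_k+E_k^0+E_k^++D_k^+.
\]
Multiplying by $\beta^{-1}\leq2$ yields $2A_0\gamma_k+2(E_k^0+E_k^+)+2D_k^+$.

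The main obstacle is that this gives $2D_k^+$, whereas the statement has the coefficient $D_k^+$ with no factor $2$. To fix this I would divide differently. Write the inclusion as
\[
\nabla R(w^{k+1})+\beta^{-1}(\text{bracket})\in \nabla R(w^{k+1})+N_C(w^{k+1}),
\]
but use $\widetilde\nabla R_k^+$ directly. That is, bound $\|\widetilde\nabla R_k^+-\beta^{-1}[\cdots]\|$ by $2(\cdots)$ without the $D$ term, then add $\|\nabla R(w^{k+1})-\widetilde\nabla R_k^+\|\leq D_k^+$ outside the division. This places $D_k^+$ outside the factor $\beta^{-1}$ and gives exactly \eqref{eq:update-stationarity-bound}.
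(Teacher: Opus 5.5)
Your final argument is correct and is essentially the paper's proof. The paper likewise uses the projection optimality condition to express $\widetilde s_k^+\widetilde\nabla R_k^+ + n^{k+1}$, bounds $\|\widetilde\nabla G_k^+-\widetilde\nabla G_k^0\|\le L_G\omega_{k+1}+E_k^0+E_k^+$, divides by $\widetilde s_k^+\ge 1/2$ to control $\dist(0,\widetilde\nabla R_k^+ + N_C(w^{k+1}))$, and only then adds $D_k^+$ by the triangle inequality. Incidentally, your first attempt also yields the coefficient $D_k^+$: keep the factor $\beta$ in $\beta\bigl(\nabla R(w^{k+1})-\widetilde\nabla R_k^+\bigr)$ instead of bounding $\beta\le 1$ before multiplying by $\beta^{-1}$.
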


\begin{proof}
	Since a penalty update is triggered, we have from the first condition in \eqref{eq:inexact-PU} that
	$
	\widetilde R_k^+>2a_\phi\mu_k>0.
	$
	Thus, by~\eqref{eq:app-residual-error}, we have
	$
	R{}(w^{k+1})
	\geq
	\widetilde R_k^+>0.
	$
	This proves~\eqref{eq:update-exact-positive}.
	
	
	From the optimality condition of~\eqref{eq:inexact-step} with $L=L_k$, there exists
	$n^{k+1}\in N_C(w^{k+1})$ such that
	\begin{equation}
		0
		=
		\gamma_k\nabla f(w^k)
		+\widetilde\nabla G_k^0
		+\frac{L_k}{\mu_k}d^k
		+n^{k+1}.
		\label{eq:accepted-optimality-penalty}
	\end{equation}
	At the accepted point, let
	$
	\widetilde s_k^+
	\coloneqq
	\phi_{\mu_k}'(\widetilde R_k^+),
$ and 
	$\widetilde\nabla G_k^+
	\coloneqq
	\widetilde s_k^+\widetilde\nabla R_k^+.$
	Then~\eqref{eq:accepted-optimality-penalty} gives
	\begin{align}
		\widetilde s_k^+\widetilde\nabla R_k^+
		+n^{k+1}
		&=
		-\gamma_k\nabla f(w^k)
		-\frac{L_k}{\mu_k}d^k
		+\widetilde\nabla G_k^+
		-\widetilde\nabla G_k^0.
		\label{eq:s-nablaR-plus-n}
	\end{align}
Meanwhile, by the triangle inequality,
\begin{align}
	\|
	\widetilde\nabla G_k^+
	-\widetilde\nabla G_k^0
	\|
	&\leq
	\|
	\widetilde\nabla G_k^+
	-\nabla G_{\mu_k}(w^{k+1})
	\|+
	\|
	\nabla G_{\mu_k}(w^{k+1})
	-\nabla G_{\mu_k}(w^k)
	\|
	\notag\\
	&\quad+
	\|
	\nabla G_{\mu_k}(w^k)
	-\widetilde\nabla G_k^0
	\|.
	\notag 
\end{align}
Using Lemma~\ref{lem:inexact-errors}(iv) to bound the first and third terms, and using Lemma~\ref{lem:G-lip} to bound the second term, we obtain 
\[
\|
\widetilde\nabla G_k^+
-\widetilde\nabla G_k^0
\|
\leq
L_G\omega_{k+1}
+E_k^0+E_k^+.
\]
	Hence, together with~\eqref{eq:s-nablaR-plus-n} and noting that $L_k\leq \overline{L}$, 
	\[
	\|
	\widetilde s_k^+\widetilde\nabla R_k^+
	+n^{k+1}
	\|
	\leq
	M_f\gamma_k
	+(\overline L+L_G)\omega_{k+1}
	+E_k^0+E_k^+.
	\]
	Since $N_C(w^{k+1})$ is a cone and
	$\widetilde s_k^+\geq1/2$ by  	Lemma~\ref{lem:phi-products}(i) with $\theta =2$,
	\begin{align}
		\dist\!\left(
		0,\widetilde\nabla R_k^+ +N_C(w^{k+1})
		\right)
		&\leq
		2M_f\gamma_k
		+2(\overline L+L_G)\omega_{k+1}
		+2(E_k^0+E_k^+).
		\notag 
	\end{align}
	The second condition in~\eqref{eq:inexact-PU} gives
	$\omega_{k+1}\leq c_2\gamma_k$, and therefore
	\begin{equation}
		\dist\!\left(
		0,\widetilde\nabla R_k^+ +N_C(w^{k+1})
		\right)
		\leq
		2A_0\gamma_k+2(E_k^0+E_k^+).
		\label{eq:dist-0-newsubgradient}
	\end{equation}
Finally, Lemma~\ref{lem:inexact-errors}(iii) gives
$
\|
\widetilde\nabla R_k^+
-\nabla R(w^{k+1})
\|
\leq D_k^+.
$
Moreover, for every $n\in N_C(w^{k+1})$, the triangle inequality
gives
\begin{align*}
	\|
	\nabla R(w^{k+1})+n
	\|
	&\leq
	\|
	\nabla R(w^{k+1})-\widetilde\nabla R_k^+
	\|
	+
	\|
	\widetilde\nabla R_k^+ +n
	\|.
\end{align*}
Taking the infimum over $n\in N_C(w^{k+1})$  and using \eqref{eq:dist-0-newsubgradient}, we have
\begin{align*}
	&\dist\!\left(
	0,\nabla R(w^{k+1})+N_C(w^{k+1})
	\right)
	\\
	&\quad\leq
	\|
	\nabla R(w^{k+1})-\widetilde\nabla R_k^+
	\|
	+
	\dist\!\left(
	0,\widetilde\nabla R_k^+ +N_C(w^{k+1})
	\right)
	\\
	&\quad\leq
	2A_0\gamma_k
	+2(E_k^0+E_k^+)
	+D_k^+,
\end{align*}
which proves~\eqref{eq:update-stationarity-bound}.
\end{proof}

We now give the complete statement of
Lemma~\ref{lem:penalty-stabilization}, including the explicit
constant appearing in the lower bound on the inverse penalty
parameter.

\medskip
\noindent\textbf{Lemma~\ref{lem:penalty-stabilization}
	(complete statement).}
\begin{itshape}
	Suppose ENNAMCQ holds for~\eqref{eq:relaxed-ME},
	$\zeta_k=o(\mu_k)$, and there exists $M_\gamma>1$ such that
	\begin{equation}
		\widehat\gamma_t
		\leq
		M_\gamma\widehat\gamma_{t+1}
		\qquad
		\forall t\geq0.
		\label{eq:grid-ratio}
	\end{equation}
	Let $\overline L$ be given by \eqref{eq:overline-L}, and define
	$
	A_0
	\coloneqq
	M_f+c_2(\overline L+L_G).
	$ 
	\begin{enumerate}
		\item If
		$
		\{w\in C:R(w)\geq0\}=\varnothing,
		$
		then no penalty update occurs, and
		$
		\gamma_k=\underline{\gamma} \coloneqq \widehat\gamma_0$ for all $k\geq0.
		$
		
		\item 	If
		$
		\{w\in C:R(w)\geq0\}\neq\varnothing,
		$ let $\eta_\delta>0$ be defined by~\eqref{eq:eta-delta-app}.
		Then there exists $k_0\geq0$ such that
		\[
		2\!\left[
		E_{\mu_k,\rho}(\zeta_k)
		+
		E_{\mu_k,\rho}(\zeta_{k+1})
		\right]
		+
		D_\rho(\zeta_{k+1})
		\leq
		\frac{\eta_\delta}{2}
		\qquad
		\forall k\geq k_0.
		\]
		Moreover,
		\begin{equation}
			\gamma_k
			\geq
			\underline\gamma
			\coloneqq
			\min\left\{
			\widehat\gamma_{k_0},
			\frac{\eta_\delta}{4M_\gamma A_0}
			\right\}
			>0
			\qquad
			\forall k\geq0.\notag
		\end{equation}
		
	\end{enumerate}

	In either case, the penalty parameter is updated only finitely many
	times, and $\gamma_k$ is eventually constant.
\end{itshape}
	
\begin{proof}[Proof of Lemma~\ref{lem:penalty-stabilization}]
	If
	$
	\{w\in C:R(w)\geq0\}=\varnothing,
	$
	then $R(w)<0$ for every $w\in C$. Since
	$\widetilde R(w)\leq R(w)$, the first condition in
	\eqref{eq:inexact-PU} can never hold. Hence no penalty update is
	ever triggered, so $t_k=0$ and
	$\gamma_k=\widehat\gamma_0$ for every $k$. Thus the conclusion
	holds with $\underline\gamma=\widehat\gamma_0$.
	
	Suppose now that
	$
	\{w\in C:R(w)\geq0\}\neq\varnothing.
	$
By Lemma~\ref{lem:ennamcq-margin}, $\eta_\delta>0$ is well defined.
		Since $\zeta_k=o(\mu_k)$ and $\mu_k\leq\mu_0$, we have
	$\zeta_k/\mu_k \to 0$ and $\zeta_k\to0$. Together with~\eqref{eq:constants-for-error-bounds},
	we have 
	\[
	D_k^+
	=
	L_{a,\rho}
	\sqrt{\frac{2\zeta_{k+1}}{m_\rho}}
	\to0 \qquad \text{and} \qquad 	E_k^0
	=
	L_{a,\rho}
	\sqrt{\frac{2\zeta_k}{m_\rho}}
	+
	b_\phi B_R\frac{\zeta_k}{\mu_k}
	\to0.
	\]
Moreover, since $\mu_{k+1}\leq\mu_k$,
	$
	\zeta_{k+1}/\mu_k
	\leq
	\zeta_{k+1}/\mu_{k+1}\to0,
	$
	and therefore
	\[
	E_k^+
	=
	L_{a,\rho}
	\sqrt{\frac{2\zeta_{k+1}}{m_\rho}}
	+
	b_\phi B_R\frac{\zeta_{k+1}}{\mu_k}
	\to0.
	\]
	Hence, there exists $k_0$ such that
	\begin{equation}
		2(E_k^0+E_k^+)+D_k^+
		\leq
		\frac{\eta_\delta}{2}
		\qquad
		\forall k\geq k_0.
		\label{eq:eventual-oracle-small}
	\end{equation}
	
	Let $k\geq k_0$ be an iteration at which a penalty update is
	triggered. By Lemma~\ref{lem:update-stationarity},
	$R(w^{k+1})>0$, and hence the definition of $\eta_\delta$ gives
	\[
	\eta_\delta
	\leq
	\dist\!\left(
	0,\nabla R(w^{k+1})+N_C(w^{k+1})
	\right).
	\]
	Combining this with
	\eqref{eq:update-stationarity-bound} and
	\eqref{eq:eventual-oracle-small}, we obtain
	$
	\eta_\delta
	\leq
	2A_0\gamma_k+\eta_\delta/2.
	$
	Thus,
$\gamma_k
		\geq
		\frac{\eta_\delta}{4A_0}.
		$
	Since an update is triggered,
	$t_{k+1}=t_k+1$. By~\eqref{eq:grid-ratio},
	$
	\gamma_k
	=
	\widehat\gamma_{t_k}
	\leq
	M_\gamma\widehat\gamma_{t_k+1}
	=
	M_\gamma\gamma_{k+1}.
	$
	Hence
	\begin{equation}
		\gamma_{k+1}
		\geq
		\frac{\eta_\delta}{4M_\gamma A_0}.
		\label{eq:gamma-after-update}
	\end{equation}
	Since $t_k\leq k$, we have for $k\leq k_0$ that
	$
	\gamma_k
	=
	\widehat\gamma_{t_k}
	\geq
	\widehat\gamma_{k_0}
	\geq
	\underline\gamma.
	$
	For $k\geq k_0$, $\gamma_k$ changes only when a penalty update is
	triggered. If no update occurs at iteration $k$, then
	$\gamma_{k+1}=\gamma_k$. If an update is triggered, then
	\eqref{eq:gamma-after-update} gives
	$
	\gamma_{k+1}
	\geq
	\eta_\delta/(4M_\gamma A_0)
	\geq
	\underline\gamma.
	$
	Therefore,
	$
	\gamma_k\geq\underline\gamma
	$
	for all $k\geq0$. This proves~(ii).
	
	Finally, since $\widehat\gamma_t\downarrow0$, there are only
	finitely many indices $t$ such that
	$\widehat\gamma_t\geq\underline\gamma$.
	Since every penalty update increases $t_k$ by one, only finitely
	many penalty updates can occur. Hence $t_k$, and therefore
	$\gamma_k$, is eventually constant.
\end{proof}
The dependence of Lemma~\ref{lem:penalty-stabilization} on the
relaxation level is worth noting. Although ENNAMCQ may hold for every
fixed $\delta>0$, the margin $\eta_\delta$ need not remain bounded
away from zero as $\delta\downarrow0$.

\begin{example}[Deterioration of the ENNAMCQ margin]
	\label{ex:eta-delta-deterioration}
	Let $X$ be any nonempty compact convex set,
	$Y=[-1,1]$, and
	$
	g(x,y)=\frac12y^2.
	$
	Then $g$ is independent of $x$, and for every $\rho>0$,
	\[
	z_\rho(y)=\frac{y}{1+\rho},
	\qquad
	H_\rho(y)
	=
	\frac{\rho}{2(1+\rho)}y^2.
	\]
	Thus, for
	$0<\delta<\rho/(2(1+\rho))$, the region
	$R(y)=H_\rho(y)-\delta\geq0$ is characterized by
	$
	|y|
	\geq
	\sqrt{\frac{2(1+\rho)\delta}{\rho}}.
	$
	Moreover,
	$
	\nabla H_\rho(y)=\frac{\rho}{1+\rho}y.
	$
	Since the two points satisfying
	$
	|y|
	=
	\sqrt{\frac{2(1+\rho)\delta}{\rho}}
	$
	lie in the interior of $Y$ due to the choice of $\delta$, their normal cones are $\{0\}$.
	Consequently, the minimum in
	Lemma~\ref{lem:ennamcq-margin} is attained at these points, and
	$
	\eta_\delta
	=
	\sqrt{\frac{2\rho}{1+\rho}}\sqrt{\delta}.
	$
	Hence
	$
	\eta_\delta=\Theta(\sqrt{\delta})\to0
	$
	as $\delta\downarrow0$.
\end{example}

Since the lower-level objective in
Example~\ref{ex:eta-delta-deterioration} is strictly convex,
ENNAMCQ holds for every fixed $\delta>0$ by
Proposition~\ref{prop:ennamcq-flat-affine}. Nevertheless, the corresponding $\eta_\delta$ may deteriorate as $\delta\downarrow0$. Consequently, the
lower bound $\underline\gamma$ in
Lemma~\ref{lem:penalty-stabilization} may approach zero, and the
stabilization index and constants entering the subsequent KKT
estimates may also deteriorate. Accordingly, the complexity results
below are stated for a fixed positive relaxation level and are not
uniform in $\delta$ as $\delta\downarrow0$.

\subsection{Theorem~\ref{thm:complexity} and
	Corollary~\ref{cor:overall-complexity}: Complexity guarantees} 
\label{app:complexity-proof}
We first establish the stationarity and feasibility estimates used
in the complexity analysis.

\begin{lemma}[Stationarity estimate]
	\label{lem:stationarity-estimate}
	Let
	$
	E_k^0\coloneqq E_{\mu_k,\rho}(\zeta_k)$ and $E_k^+\coloneqq E_{\mu_k,\rho}(\zeta_{k+1}),
	$
	and define
	\begin{equation}
			\widetilde\lambda_{k+1}
		\coloneqq
		\gamma_k^{-1}
		\phi_{\mu_k}'(\widetilde R_k^+).
		\label{eq:multiplier}
	\end{equation}
	Then, for every $k\geq0$,
	\begin{equation}
		\Phi_\rho^{\rm s}
		(w^{k+1},\widetilde\lambda_{k+1})
		\leq
		\gamma_k^{-1}
		\left(
		M_1\omega_{k+1}+E_k^0+E_k^+
		\right),
		\label{eq:stationarity-estimate}
	\end{equation}
	where $\overline L$ is given by \eqref{eq:overline-L},
	$
	M_1
	\coloneqq
	\widehat\gamma_0\mu_0L_f+\overline L + L_G,
	$ and $	\Phi_\rho^{\rm s}$ is given in Definition~\ref{def:approx-KKT}.
\end{lemma}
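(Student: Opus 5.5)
We follow the argument of Lemma~\ref{lem:update-stationarity}, but keep the first-order optimality relation of the projected step and divide it by $\gamma_k$ instead of by $\widetilde s_k^+$. By the optimality condition of \eqref{eq:inexact-step} with $L=L_k$, there is $n^{k+1}\in N_C(w^{k+1})$ such that \eqref{eq:accepted-optimality-penalty} holds. Adding and subtracting $\gamma_k\nabla f(w^{k+1})$ and $\widetilde\nabla G_k^+=\phi_{\mu_k}'(\widetilde R_k^+)\widetilde\nabla R_k^+$ gives
\[
\gamma_k\nabla f(w^{k+1})+\widetilde\nabla G_k^+ +n^{k+1}
=\gamma_k\bigl(\nabla f(w^{k+1})-\nabla f(w^k)\bigr)
+\bigl(\widetilde\nabla G_k^+-\widetilde\nabla G_k^0\bigr)
-\frac{L_k}{\mu_k}d^k .
\]

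We bound the three terms on the right separately. The first term is at most $\gamma_kL_f\|d^k\|=\gamma_k\mu_kL_f\omega_{k+1}\le\widehat\gamma_0\mu_0L_f\omega_{k+1}$. For the second term, the three-term triangle splitting already used in the proof of Lemma~\ref{lem:update-stationarity} applies: Lemma~\ref{lem:inexact-errors}(iv) at $w^k$ (accuracy $\zeta_k$) and at $w^{k+1}$ (accuracy $\zeta_{k+1}$), together with Lemma~\ref{lem:G-lip}, give the bound $L_G\omega_{k+1}+E_k^0+E_k^+$. The last term equals $L_k\omega_{k+1}\le\overline L\omega_{k+1}$ by Lemma~\ref{lem:descent}(i). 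Summing, the right-hand side has norm at most $M_1\omega_{k+1}+E_k^0+E_k^+$.

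It remains to pass from the inexact gradient $\widetilde\nabla R_k^+$ to the exact $\nabla R(w^{k+1})$. We divide the identity by $\gamma_k>0$; since $N_C(w^{k+1})$ is a cone, $\gamma_k^{-1}n^{k+1}\in N_C(w^{k+1})$, which yields
\[
\dist\bigl(0,\nabla f(w^{k+1})+\widetilde\lambda_{k+1}\widetilde\nabla R_k^+ +N_C(w^{k+1})\bigr)\le\gamma_k^{-1}(M_1\omega_{k+1}+E_k^0+E_k^+).
\]
Replacing $\widetilde\nabla R_k^+$ by $\nabla R(w^{k+1})$ costs $\widetilde\lambda_{k+1}D_k^+\le\gamma_k^{-1}D_k^+$. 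This extra error would spoil the stated bound unless it is absorbed, and we expect this to be the main obstacle.

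To handle it, we first try to avoid the error altogether. Note that $E_{\mu,\rho}(\zeta)$ already contains $D_\rho(\zeta)$. We therefore split the second term differently, comparing $\widetilde\nabla G_k^+$ with the exact-$R$ quantity $\phi'_{\mu_k}(\widetilde R_k^+)\nabla R(w^{k+1})$. Lemma~\ref{lem:inexact-errors}(iii) bounds this difference by $D_k^+$. The remaining difference, $\phi'_{\mu_k}(\widetilde R_k^+)\nabla R(w^{k+1})-\widetilde\nabla G_k^0$, is compared through $\nabla G_{\mu_k}(w^{k+1})$. This step uses $|\phi'_{\mu_k}(\widetilde R_k^+)-\phi'_{\mu_k}(R(w^{k+1}))|\,B_R\le b_\phi B_R\zeta_{k+1}/\mu_k$, which is the second summand of $E_k^+$. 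The total error is then exactly $D_k^++b_\phi B_R\zeta_{k+1}/\mu_k=E_k^+$. Working directly with $\nabla R(w^{k+1})$ in this way keeps the constant at $E_k^0+E_k^+$ and gives \eqref{eq:stationarity-estimate}.
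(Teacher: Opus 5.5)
Your end point is the paper's argument. The paper writes the optimality relation \eqref{eq:accepted-optimality-penalty} with $\phi'_{\mu_k}(\widetilde R_k^+)\nabla R(w^{k+1})=\gamma_k\widetilde\lambda_{k+1}\nabla R(w^{k+1})$ on the left, and routes $\phi'_{\mu_k}(\widetilde R_k^+)\nabla R(w^{k+1})-\widetilde\nabla G_k^0$ through $\nabla G_{\mu_k}(w^{k+1})$ and $\nabla G_{\mu_k}(w^k)$. The only error charged at $w^{k+1}$ is then $|\phi'_{\mu_k}(\widetilde R_k^+)-\phi'_{\mu_k}(R(w^{k+1}))|\,B_R\le b_\phi B_R\zeta_{k+1}/\mu_k\le E_k^+$; this is exactly \eqref{eq:penalty-stationarity-condition}. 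Your other estimates are all correct: the first term, $L_k\le\overline L$, $L_G\omega_{k+1}$, $E_k^0$, and the cone argument after dividing by $\gamma_k$.

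However, the bookkeeping you give for absorbing $D_k^+$ does not, read literally, close the gap you identified. Suppose the left-hand side still contains $\widetilde\nabla G_k^+=\gamma_k\widetilde\lambda_{k+1}\widetilde\nabla R_k^+$. Then splitting the second term $\widetilde\nabla G_k^+-\widetilde\nabla G_k^0$ into $D_k^+ + b_\phi B_R\zeta_{k+1}/\mu_k+L_G\omega_{k+1}+E_k^0$ merely reproduces the bound of Lemma~\ref{lem:inexact-errors}(iv), whose proof is that very split. The conversion cost $\gamma_k^{-1}D_k^+$ is still owed. What actually removes it is cancellation. The correction $\phi'_{\mu_k}(\widetilde R_k^+)\nabla R(w^{k+1})-\widetilde\nabla G_k^+$ that converts the left-hand side is the negative of the piece you ``compare'' on the right, so $\widetilde\nabla G_k^+$ drops out and no $D_k^+$ is incurred at all. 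So add and subtract $\phi'_{\mu_k}(\widetilde R_k^+)\nabla R(w^{k+1})$, rather than $\widetilde\nabla G_k^+$, from the outset. Then $\widetilde\nabla R_k^+$ never appears, and \eqref{eq:stationarity-estimate} holds with slack, since $b_\phi B_R\zeta_{k+1}/\mu_k\le E_k^+$.
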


\begin{proof}
	From the proof of Lemma~\ref{lem:penalty-stabilization} in Appendix~\ref{app:penalty-stabilization}, there exists $n^{k+1}\in N_C(w^{k+1})$ such that \eqref{eq:accepted-optimality-penalty} holds. Then using the definition of $\widetilde\lambda_{k+1}$ in \eqref{eq:multiplier}, 
\begin{align}
	&
	\gamma_k\nabla f(w^{k+1})
	+
	\gamma_k\widetilde\lambda_{k+1}
	\nabla R(w^{k+1})
	+n^{k+1}
	\notag \\
	&\quad=
	\gamma_k
	\bigl(\nabla f(w^{k+1})-\nabla f(w^k)\bigr)
	-\frac{L_k}{\mu_k}d^k +
	\left[
	\phi_{\mu_k}'(\widetilde R_k^+)
	\nabla R(w^{k+1})
	-\nabla G_{\mu_k}(w^{k+1})
	\right] \notag 
	\\
	&\qquad+
	\left[
	\nabla G_{\mu_k}(w^{k+1})
	-\nabla G_{\mu_k}(w^k)
	\right]
	+
	\left[
	\nabla G_{\mu_k}(w^k)
	-\widetilde\nabla G_k^0
	\right].\label{eq:penalty-stationarity-condition}
\end{align}
For the first bracketed term, since
$
\nabla G_{\mu_k}(w^{k+1})
=
\phi_{\mu_k}'(R(w^{k+1}))
\nabla R(w^{k+1}),
$
we have
\begin{align}
	\left\|
	\phi_{\mu_k}'(\widetilde R_k^+)
	\nabla R(w^{k+1})
	-\nabla G_{\mu_k}(w^{k+1})
	\right\| & =
	\left|
	\phi_{\mu_k}'(\widetilde R_k^+)
	-\phi_{\mu_k}'(R(w^{k+1}))
	\right|
	\|\nabla R(w^{k+1})\|
	\notag \\
	&\quad\leq
	\frac{b_\phi B_R}{\mu_k}
	\left|
	\widetilde R_k^+-R(w^{k+1})
	\right|
	\notag \\
	&\quad\leq
	\frac{b_\phi B_R}{\mu_k}\zeta_{k+1}
	\leq
	E_k^+,\label{eq:first-bracket-upperbound}
\end{align}
where we used~\eqref{eq:phi-derivative-bound}, \eqref{eq:LR-BR}, and
\eqref{eq:app-residual-error}.

Taking norms in \eqref{eq:penalty-stationarity-condition}, applying the triangle
inequality, and using \eqref{eq:first-bracket-upperbound},
\begin{align*}
	\left\|
	\gamma_k\nabla f(w^{k+1})
	+
	\gamma_k\widetilde\lambda_{k+1}
	\nabla R(w^{k+1})
	+n^{k+1}
	\right\|&\leq
	\gamma_kL_f\|d^k\|
	+\frac{L_k}{\mu_k}\|d^k\|
	+E_k^+
	+\frac{L_G}{\mu_k}\|d^k\|
	+E_k^0
	\\
	&\quad=
	\gamma_kL_f\|d^k\|
	+\frac{L_k+L_G}{\mu_k}\|d^k\|
	+E_k^0+E_k^+.
\end{align*}
where we also used $L_f$-Lipschitz continuity of $\nabla f$,
Lemma~\ref{lem:G-lip}, and
Lemma~\ref{lem:inexact-errors}(iv). Since $\|d^k\|=\mu_k\omega_{k+1}$,
	$\gamma_k\leq\widehat\gamma_0$,
	$\mu_k\leq\mu_0$, and $L_k\leq\overline L$, we obtain
	\[
	\left\|
	\gamma_k\nabla f(w^{k+1})
	+
	\gamma_k\widetilde\lambda_{k+1}
	\nabla R{}(w^{k+1})
	+n^{k+1}
	\right\|
	\leq
	M_1\omega_{k+1}+E_k^0+E_k^+.
	\]
	Dividing by $\gamma_k$, noting that $\frac{n^{k+1}}{\gamma_k}\in N_C(w^{k+1})$ and by the definition of the distance function, we obtain~\eqref{eq:stationarity-estimate}.
\end{proof}

\begin{lemma}[Feasibility and complementarity]
	\label{lem:feasibility-estimate}
	Suppose that, at iteration $k$,
	no penalty update occurs but the second condition in \eqref{eq:inexact-PU} is satisfied. Then
	\begin{align}
		\Phi_\rho^{\rm f}(w^{k+1})
		&\leq
		2a_\phi\mu_k+\zeta_{k+1},
		\label{eq:feasibility-estimate}\\
		\Phi_\rho^{\rm c}
		(w^{k+1},\widetilde\lambda_{k+1})
		&\leq
		\gamma_k^{-1}
		\left(2a_\phi\mu_k+\zeta_{k+1}\right),
		\label{eq:complementarity-estimate}
	\end{align}
	where $	\Phi_\rho^{\rm f}$ and $	\Phi_\rho^{\rm c}$ are given in Definition~\ref{def:approx-KKT}
\end{lemma}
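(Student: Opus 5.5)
Since no penalty update occurs at iteration $k$ while the second condition in \eqref{eq:inexact-PU} holds, the first condition must fail. That is,
\[
\widetilde R_k^+ \leq 2a_\phi\mu_k .
\]
The plan is to turn this single inequality into both bounds, using the residual error bound \eqref{eq:app-residual-error} and the smoothing estimates in Lemma~\ref{lem:phi-products}.

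\emph{Feasibility.} By Lemma~\ref{lem:inexact-errors}(ii) applied at $w^{k+1}$ with accuracy $\zeta_{k+1}$, we have $R(w^{k+1})\leq \widetilde R_k^+ + \zeta_{k+1}\leq 2a_\phi\mu_k+\zeta_{k+1}$. The right-hand side is nonnegative, so taking positive parts gives $[R(w^{k+1})]_+\leq 2a_\phi\mu_k+\zeta_{k+1}$, which is \eqref{eq:feasibility-estimate}.

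\emph{Complementarity.} Here $\Phi_\rho^{\rm c}(w^{k+1},\widetilde\lambda_{k+1}) = \gamma_k^{-1}\phi_{\mu_k}'(\widetilde R_k^+)\,|R(w^{k+1})|$. The natural route is to split $R(w^{k+1}) = \widetilde R_k^+ + e$, where $e\in[0,\zeta_{k+1}]$ by \eqref{eq:app-residual-error}. The triangle inequality then gives
\[
\phi_{\mu_k}'(\widetilde R_k^+)\,|R(w^{k+1})|
\leq \bigl|\phi_{\mu_k}'(\widetilde R_k^+)\,\widetilde R_k^+\bigr| + \phi_{\mu_k}'(\widetilde R_k^+)\, e .
\]
For the first term, Lemma~\ref{lem:phi-products}(ii) with $\theta=2$ applies because $\widetilde R_k^+\leq 2a_\phi\mu_k$, so it is at most $2a_\phi\mu_k$. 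For the second term, \eqref{eq:phi-derivative-bound} gives $0\le\phi_{\mu_k}'\le 1$, so it is at most $\zeta_{k+1}$. Dividing by $\gamma_k>0$ yields \eqref{eq:complementarity-estimate}.

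No step here is a real obstacle. The one point to handle carefully is that Lemma~\ref{lem:phi-products}(ii) bounds $|\phi'(t)t|$ at the inexact value $t=\widetilde R_k^+$, not at $R(w^{k+1})$. The splitting above deals with this, using only the one-sided error $0\le R-\widetilde R\le\zeta_{k+1}$ and the bound $\phi'\le1$.
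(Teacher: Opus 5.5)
Your proposal is correct and follows the paper's proof essentially line for line. You deduce $\widetilde R_k^+\le 2a_\phi\mu_k$ from the failed first update condition and use $R\le\widetilde R_k^++\zeta_{k+1}$ for feasibility. For complementarity you split $R(w^{k+1})=\widetilde R_k^++e$ with $0\le e\le\zeta_{k+1}$, then bound the two pieces by Lemma~\ref{lem:phi-products}(ii) with $\theta=2$ and by $0\le\phi_{\mu_k}'\le1$.
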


\begin{proof}
	By the hypothesis, we have  $\widetilde R_k^+\leq 2a_\phi\mu_k.$
	Thus, by \eqref{eq:app-residual-error}, we have 
	\[
	R{}(w^{k+1})
	\leq
	\widetilde R_k^++\zeta_{k+1} \leq 2a_\phi\mu_k + \zeta_{k+1}.
	\]
	Consequently, $$\Phi_\rho^{\rm f}(w^{k+1}) = [R(w^{k+1})]_+ \leq [2a_\phi \mu_k + \zeta_{k+1}]_+= 2a_\phi \mu_k+ \zeta_{k+1},$$ proving \eqref{eq:feasibility-estimate}.
	
	For complementarity, write
	$
	R{}(w^{k+1})
	=
	\widetilde R_k^++e_k^+,$ where 
	$0\leq e_k^+\leq\zeta_{k+1}.
	$
	Then
	\begin{align*}
		\left|
		\widetilde\lambda_{k+1}
		R{}(w^{k+1})
		\right| &\leq
		\gamma_k^{-1}
		\left|
		\phi_{\mu_k}'(\widetilde R_k^+)
		\widetilde R_k^+
		\right|
		+
		\gamma_k^{-1}
		\phi_{\mu_k}'(\widetilde R_k^+)e_k^+ \\
		& \leq \gamma_k^{-1}
		\left|
		\phi_{\mu_k}'(\widetilde R_k^+)
		\widetilde R_k^+
		\right|
		+
		\gamma_k^{-1}
		\zeta_{k+1} \\
		& \leq \gamma_k^{-1} (2a_\phi\mu_k) + 	\gamma_k^{-1}
		\zeta_{k+1},
	\end{align*}
	where the second inequality holds by \eqref{eq:phi-derivative-bound}, and the last inequality holds by noting that $\widetilde R_k^+\leq 2a_\phi\mu_k$ and invoking Lemma~\ref{lem:phi-products}(ii) with $\theta=2$. By the definition of $\Phi_\rho^{\rm c}$, we get~\eqref{eq:complementarity-estimate}.
\end{proof}

\begin{lemma}[KKT estimates at a non-update iteration]
	\label{lem:nonupdate-kkt}
	Suppose
	$
	\Xi_\infty
	\coloneqq
	\sum_{k=0}^{\infty}\tau_k
	<\infty,
	$
	and let $\underline\gamma>0$ be the lower bound from
	Lemma~\ref{lem:penalty-stabilization}. Define
	\begin{equation}
		t_*
		\coloneqq
		\max\{t\geq0:
		\widehat\gamma_t\geq\underline\gamma\},
		\notag 
	\end{equation}
	For $K\geq0$, let $m\coloneqq\lceil K/2\rceil$ and define
	\begin{align}
			S_K
		& \coloneqq
		\sum_{k=m}^{K}\mu_k,
		\qquad
		\widetilde S_K
		\coloneqq
		S_K-t_*\mu_m.
		\label{eq:modified-SK} \\ 
		\mathcal E_K
		&\coloneqq
		\max_{m\leq j\leq K}
		(E_j^0+E_j^+),
		\label{eq:window-E}\\
		\mathcal Z_K
		&\coloneqq
		\max_{m\leq j\leq K}
		\zeta_{j+1}.
		\label{eq:window-zeta}
	\end{align}
	If
	\begin{equation}
		\widetilde S_K>0
		\qquad \text{and} \qquad 
		\widetilde S_K^{-1}
		\leq
		\frac{(c_2\underline\gamma)^2}{M_2},
		\label{eq:window-small-step-condition}
	\end{equation}
	where $	M_2
	\coloneqq
	\frac{2(Q_0+a_1\mu_0+\Xi_\infty)}{c_1},$
	then there exists an index $\widehat k\in[m,K]$ at which no
	penalty update occurs and such that
	\begin{align}
		\Phi_\rho^{\rm s}
		(w^{\widehat k+1},
		\widetilde\lambda_{\widehat k+1})
		&\leq
		\underline\gamma^{-1}
		\left(
		M_1\sqrt{\frac{M_2}{\widetilde S_K}}
		+\mathcal E_K
		\right),
		\label{eq:window-stationarity}\\
		\Phi_\rho^{\rm f}(w^{\widehat k+1})
		&\leq
		2a_\phi\mu_m+\mathcal Z_K,
		\label{eq:window-feasibility}\\
		\Phi_\rho^{\rm c}
		(w^{\widehat k+1},
		\widetilde\lambda_{\widehat k+1})
		&\leq
		\underline\gamma^{-1}
		\left(
		2a_\phi\mu_m+\mathcal Z_K
		\right).
		\label{eq:window-complementarity}
	\end{align}
\end{lemma}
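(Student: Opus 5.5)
Summing the Lyapunov descent of Lemma~\ref{lem:descent}(ii) over the window $k=m,\dots,K$ gives
\[
\frac{c_1}{2}\sum_{k=m}^K\mu_k\omega_{k+1}^2\le Q_m-Q_{K+1}+\sum_{k=m}^K\tau_k .
\]
To make this uniform, bound $Q_m\le Q_0+\sum_{k<m}\tau_k$ by the same descent. For the lower bound, $Q_{K+1}\ge G_{\mu_{K+1}}(w^{K+1})\ge -a_1\mu_{K+1}\ge -a_1\mu_0$ by \eqref{eq:phi-approx}, since $f-f_{\inf}\ge0$ and $c_\phi\mu\ge0$. Hence
\[
\sum_{k=m}^K\mu_k\omega_{k+1}^2\le \frac{2(Q_0+a_1\mu_0+\Xi_\infty)}{c_1}=M_2 .
\]

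The next step isolates the non-update iterations in the window. The number of penalty updates over the whole run is at most $t_*$. Indeed, $\gamma_k\ge\underline\gamma$ by Lemma~\ref{lem:penalty-stabilization}, so $t_k\le t_*$ for all $k$. Let $\mathcal U\subset[m,K]$ be the set of update indices, so $|\mathcal U|\le t_*$. Since $\mu_k$ is nonincreasing, $\sum_{k\in\mathcal U}\mu_k\le t_*\mu_m$, and therefore
\[
\sum_{k\in[m,K]\setminus\mathcal U}\mu_k\ge \widetilde S_K>0 .
\]
This sum is positive, so the set is nonempty. Choose $\widehat k$ in this set minimizing $\omega_{k+1}^2$. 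A weighted-average argument then gives
\[
\omega_{\widehat k+1}^2\le M_2/\widetilde S_K ,
\]
that is, $\omega_{\widehat k+1}\le\sqrt{M_2/\widetilde S_K}$.

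The stationarity bound follows by inserting this into Lemma~\ref{lem:stationarity-estimate} and using $\gamma_{\widehat k}\ge\underline\gamma$ and $E^0_{\widehat k}+E^+_{\widehat k}\le\mathcal E_K$. This yields \eqref{eq:window-stationarity}.

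For feasibility and complementarity, apply Lemma~\ref{lem:feasibility-estimate}. Its hypothesis requires that no update occurs at $\widehat k$ (true by choice) and that $\|d^{\widehat k}\|\le c_2\mu_{\widehat k}\gamma_{\widehat k}$, i.e.\ $\omega_{\widehat k+1}\le c_2\gamma_{\widehat k}$. This is where \eqref{eq:window-small-step-condition} enters:
\[
\omega_{\widehat k+1}\le\sqrt{M_2/\widetilde S_K}\le c_2\underline\gamma\le c_2\gamma_{\widehat k}.
\]
Since no update occurs while the small-step condition holds, the first condition of \eqref{eq:inexact-PU} must fail, so $\widetilde R^+_{\widehat k}\le 2a_\phi\mu_{\widehat k}$. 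Lemma~\ref{lem:feasibility-estimate} then yields bounds with $2a_\phi\mu_{\widehat k}+\zeta_{\widehat k+1}$. These are at most $2a_\phi\mu_m+\mathcal Z_K$ by monotonicity of $\mu$ and the definition of $\mathcal Z_K$. For complementarity, the additional factor $\gamma_{\widehat k}^{-1}$ is at most $\underline\gamma^{-1}$.

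The main obstacle is the bookkeeping in the first step. The Lyapunov lower bound must be uniform, which requires the $-a_1\mu$ term rather than $0$, and the $\tau$ sums must be controlled by $\Xi_\infty$ across both the pre-window and window parts. The count of update indices must also be justified via $t_*$ together with the nonincreasing $\mu_k$. The rest is direct substitution.
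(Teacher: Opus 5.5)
Your proposal is correct and follows essentially the same route as the paper. Like the paper, you bound the windowed sum $\sum_{k=m}^K\mu_k\omega_{k+1}^2$ by $M_2$ using the Lyapunov descent, the uniform lower bound $Q_k\ge -a_1\mu_0$, and $\Xi_\infty$; you discard at most $t_*$ update indices so the remaining weight is at least $\widetilde S_K$; you pick a non-update index by a weighted-average argument; and, after checking the small-step condition via \eqref{eq:window-small-step-condition}, you invoke Lemma~\ref{lem:stationarity-estimate} and Lemma~\ref{lem:feasibility-estimate}.
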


\begin{proof}
	Since $\gamma_k\geq\underline\gamma$ for every $k$ and
	$\widehat\gamma_t\downarrow0$, the integer $t_*$ is finite.
	Moreover, $t_0=0$ and every penalty update increases $t_k$ by one.
	Since $\gamma_k=\widehat\gamma_{t_k}\geq\underline\gamma$, we have
	$t_k\leq t_*$ for every $k$. Hence, the total number of penalty
	updates is at most $t_*$.
	
	Let
	$
	\mathcal I
	\coloneqq
	\{k\geq0:t_{k+1}=t_k+1\}
	$
	be the set of penalty-update iterations, and define
	$
	\mathcal N_K
	\coloneqq
	\{m,\ldots,K\}\setminus\mathcal I.
	$	Thus, $\mathcal N_K$ consists of the non-update iterations in the
	second half of the first $K+1$ iterations.

	Summing \eqref{eq:Lyapunov-descent-app} from $k=m$ to $K$ gives
	\begin{equation}
		\frac{c_1}{2}
		\sum_{k=m}^{K}\mu_k\omega_{k+1}^2
		\leq
		Q_m-Q_{K+1}
		+
		\sum_{k=m}^{K}\tau_k.
		\label{eq:second-half-descent}
	\end{equation}
	On the other hand, summing \eqref{eq:Lyapunov-descent-app}
	$k=0$ to $m-1$ yields 
	\begin{equation}
			Q_m
		\leq
		Q_0+\sum_{k=0}^{m-1}\tau_k -\frac{c_1}{2}
		\sum_{k=0}^{m-1}\mu_k\omega_{k+1}^2 \leq 	Q_0+\sum_{k=0}^{m-1}\tau_k.
		\label{eq:first-half-descent}
	\end{equation}
	Meanwhile, because
	$f(w)-f_{\inf}\geq0$,
	$G_\mu(w)\geq-a_1\mu$ by \eqref{eq:phi-approx} and  $\{\mu_k\}$ is nonincreasing, we have $
		Q_k \geq -a_1\mu_0$ for all  $k\geq 0$. 
	Thus, combining \eqref{eq:first-half-descent} and
	\eqref{eq:second-half-descent} yields
	\[
	\frac{c_1}{2}
	\sum_{k=m}^{K}\mu_k\omega_{k+1}^2
	\leq
	Q_0+a_1\mu_0+\sum_{k=0}^{K}\tau_k
	\leq
	Q_0+a_1\mu_0+\Xi_\infty.
	\]
	Hence, by the definition of $M_2$, we have
	\begin{equation}
		\sum_{k\in\mathcal N_K}
		\mu_k\omega_{k+1}^2
		\leq
		M_2.
		\label{eq:nonupdate-step-sum}
	\end{equation}
	
	Since $\{\mu_k\}$ is nonincreasing and
	$|\mathcal I|\leq t_*$, we have from the definition of $S_K$ and $\widetilde S_K$ in~\eqref{eq:modified-SK} 
	\begin{align}
		\sum_{k\in\mathcal N_K}\mu_k
		&=
		S_K-
		\sum_{k\in\mathcal I\cap\{m,\ldots,K\}}\mu_k
		\notag\\
		&\geq
		S_K-
		|\mathcal I\cap\{m,\ldots,K\}|\mu_m
		\notag\\
		&\geq
		S_K-t_*\mu_m
		=
		\widetilde S_K.
		\label{eq:nonupdate-weight}
	\end{align}
	Since $\widetilde S_K>0$ by \eqref{eq:window-small-step-condition}, the set $\mathcal N_K$ is nonempty.
	Combining~\eqref{eq:nonupdate-step-sum} and
	\eqref{eq:nonupdate-weight}, there exists
	$\widehat k\in\mathcal N_K$ such that
	\begin{equation}
		\omega_{\widehat k+1}^2
		\leq
		\frac{
			\sum_{k\in\mathcal N_K}
			\mu_k\omega_{k+1}^2
		}{
			\sum_{k\in\mathcal N_K}\mu_k
		}
		\leq
		\frac{M_2}{\widetilde S_K}.
		\label{eq:nonupdate-small-step}
	\end{equation}
	By~\eqref{eq:window-small-step-condition} and
	$\gamma_{\widehat k}\geq\underline\gamma$,
	\[
	\omega_{\widehat k+1}
	\leq
	\sqrt{\frac{M_2}{\widetilde S_K}}
	\leq
	c_2\underline\gamma
	\leq
	c_2\gamma_{\widehat k}.
	\]
	Since $\omega_{\widehat k+1} = \frac{\norm{d^{\widehat k}}}{\mu_{\widehat k}}$, it follows from the above inequality that the second condition in \eqref{eq:inexact-PU} holds. Meanwhile,
	$\widehat k\in\mathcal N_K$ so that no penalty update occurs at iteration $\widehat k$. Hence, Lemma~\ref{lem:feasibility-estimate}  implies that 
	\[
	\Phi_\rho^{\rm f}(w^{\widehat k+1})
	\leq
	2a_\phi\mu_{\widehat k}+\zeta_{\widehat k+1}.
	\]
	Since $\widehat k\geq m$, the monotonicity of $\{\mu_k\}$ and the
	definition of $\mathcal Z_K$ in \eqref{eq:window-zeta} imply
	$
	\mu_{\widehat k}\leq\mu_m$ and
	$
	\zeta_{\widehat k+1}\leq\mathcal Z_K
	$. Hence,~\eqref{eq:window-feasibility} immediately follows. By a similar argument and noting that
	$\gamma_{\widehat k}\geq\underline\gamma$, we obtain~\eqref{eq:window-complementarity}.
	
	Finally, Lemma~\ref{lem:stationarity-estimate}, \eqref{eq:nonupdate-small-step} and \eqref{eq:window-E} yield
	\begin{align*}
		\Phi_\rho^{\rm s}
		(w^{\widehat k+1},
		\widetilde\lambda_{\widehat k+1})
		&\leq
		\gamma_{\widehat k}^{-1}
		\left(
		M_1\omega_{\widehat k+1}
		+E_{\widehat k}^0+E_{\widehat k}^+
		\right)\\
		&\leq
		\underline\gamma^{-1}
		\left(
		M_1\sqrt{\frac{M_2}{\widetilde S_K}}
		+\mathcal E_K
		\right),
	\end{align*}
	which proves~\eqref{eq:window-stationarity}.
\end{proof}

To convert the preceding estimate into explicit polynomial rates, we record the following elementary consequences of the \alert{smoothing condition}.
\begin{lemma}[Polynomial smoothing estimates]
	\label{lem:polynomial-smoothing-estimates}
	\alert{Let $\{\mu_k\}$ be positive and nonincreasing and satisfy
		$
		\mu_k=\Theta((1+k)^{-r}),
		$
		with $r\in(0,1)$,}
	and define $S_K$ as in \eqref{eq:modified-SK} with $m=\lceil K/2\rceil$.
	Then
	\begin{equation}
		S_K=\Theta(K^{1-r}),
		\qquad
		\mu_{m}=\Theta(K^{-r}).
		\label{eq:SK-rate}
	\end{equation}
	Consequently, for $\widetilde S_K$ given in \eqref{eq:modified-SK} with any fixed $t_*\geq 0$, we have
	\begin{equation}
		\widetilde S_K=\Theta(K^{1-r}).
		\label{eq:modified-SK-rate}
	\end{equation}
	In particular, $\widetilde S_K>0$ for all sufficiently large $K$.
\end{lemma}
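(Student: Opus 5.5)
From $\mu_k=\Theta((1+k)^{-r})$ I fix constants $0<c\le C$ with $c(1+k)^{-r}\le\mu_k\le C(1+k)^{-r}$ for all $k\ge0$. I then handle the three quantities in \eqref{eq:SK-rate}--\eqref{eq:modified-SK-rate} one at a time.

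For $\mu_m$: since $m=\lceil K/2\rceil$ satisfies $K/2\le m\le K/2+1$, we have $1+m\in[K/2+1,\,K/2+2]$. For $K\ge1$ this lies between $K/2$ and $3K$. Substituting into the two-sided bound gives $\mu_m=\Theta(K^{-r})$, with constants depending only on $c,C,r$.

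For $S_K$: the window $\{m,\dots,K\}$ contains $K-m+1\ge K/2$ indices, and at most $K/2+1\le K$ of them for $K\ge2$. Since $\mu_k$ is nonincreasing, every term satisfies $\mu_K\le\mu_k\le\mu_m$. This yields
\[
\tfrac{K}{2}\,\mu_K\le S_K\le K\mu_m .
\]
We have $\mu_K\ge c(1+K)^{-r}\ge c\,2^{-r}K^{-r}$, and $\mu_m=O(K^{-r})$ from the previous step. Hence $S_K=\Theta(K^{1-r})$. This monotone sandwich argument is simpler than an integral comparison, so I will use it.

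For $\widetilde S_K=S_K-t_*\mu_m$: the upper bound $\widetilde S_K\le S_K=O(K^{1-r})$ is immediate. For the lower bound, write $S_K\ge a K^{1-r}$ and $t_*\mu_m\le t_*bK^{-r}$ for constants $a,b>0$. Since $K^{-r}/K^{1-r}=K^{-1}\to0$, for $K\ge 2t_*b/a$ we get $t_*\mu_m\le \tfrac12 S_K$. Then $\widetilde S_K\ge \tfrac a2K^{1-r}$, so $\widetilde S_K=\Theta(K^{1-r})$, and in particular $\widetilde S_K>0$ for all such $K$.

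The only point needing care is that the $\Theta$ statements hold for sufficiently large $K$, and the threshold depends on the fixed $t_*$. This is consistent with the lemma's phrasing ``for all sufficiently large $K$''.
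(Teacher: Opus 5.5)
Your proof is correct and follows essentially the same route as the paper. Both sandwich every term of $S_K$ over the window $\{m,\dots,K\}$ (you via monotonicity of $\mu_k$, the paper by applying the $\Theta$-bounds to each term directly) and then show $t_*\mu_m/S_K=O(1/K)$. Your choice of constants valid for all $k\ge0$ is legitimate, since positivity of $\mu_k$ lets you absorb the finitely many initial indices.
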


\begin{proof}
	Since
	$K-m+1=\lfloor K/2\rfloor+1$, for $K\geq2$,
	\begin{equation}
		\frac{K}{2}
		\leq
		K-m+1
		\leq
		K\label{eq:K-m+1}
	\end{equation}
	\alert{By $\mu_k=\Theta((1+k)^{-r})$, there exist constants
		$c_\mu,C_\mu>0$ such that, for all sufficiently large $k$,
		\[
		c_\mu(1+k)^{-r}
		\leq
		\mu_k
		\leq
		C_\mu(1+k)^{-r}.
		\]
		Hence, for all sufficiently large $K$ and $k=m,\ldots,K$,
		\[
		c_\mu(K+1)^{-r}
		\leq
		\mu_k
		\leq
		C_\mu(m+1)^{-r}.
		\]}
	Hence, summing from $k=m$ to $k=K$ and noting \eqref{eq:K-m+1}, 
	\[
	\alert{\frac{K}{2}c_\mu(K+1)^{-r}}
	\leq
	S_K
	\leq
	\alert{KC_\mu(m+1)^{-r}}.
	\]
	Using $K+1\leq2K$ and $m+1\geq K/2$ gives
	\begin{equation}
		\alert{\frac{c_\mu}{2^{r+1}}}K^{1-r}
		\leq
		S_K
		\leq
		\alert{2^rC_\mu}K^{1-r},
		\label{eq:SK-squeeze}
	\end{equation}
	and therefore $S_K=\Theta(K^{1-r})$. Similarly,
	\alert{since
		$
		c_\mu(m+1)^{-r}
		\leq
		\mu_m
		\leq
		C_\mu(m+1)^{-r},
		$}
	and $K/2\leq m+1\leq K$ for $K\geq2$, we have
	\begin{equation}
		\alert{c_\mu}K^{-r}
		\leq
		\mu_m
		\leq
		\alert{2^rC_\mu}K^{-r}.
		\label{eq:mu-squeeze}
	\end{equation}
	Thus, $\mu_{\lceil K/2\rceil}=\Theta(K^{-r})$. Hence, both the estimates in \eqref{eq:SK-rate} hold. 
	
	Finally, using \eqref{eq:SK-squeeze} and \eqref{eq:mu-squeeze},
	\begin{equation}
		\frac{t_*\mu_m}{S_K}
		\leq
		\frac{
			\alert{t_*2^rC_\mu}K^{-r}
		}{
			\alert{(c_\mu/2^{r+1})}K^{1-r}
		}
		=
		\alert{\frac{2^{2r+1}t_*C_\mu}{c_\mu K}},
		\label{eq:tstar-mu-SK}
	\end{equation}
	Since
	\[
	\frac{\widetilde S_K}{S_K}
	=
	1-\frac{t_*\mu_m}{S_K}
	\longrightarrow 1
	\]
	by~\eqref{eq:tstar-mu-SK}, we have
	$\widetilde S_K\sim S_K$. In particular,
	$\widetilde S_K=\Theta(S_K)$, and since
	$S_K=\Theta(K^{1-r})$, \eqref{eq:modified-SK-rate} holds.
\end{proof}
\alert{For instance, for any fixed integer $B\geq1$, the blockwise schedule
$\mu_k=\mu_0(1+\lfloor k/B\rfloor)^{-r}$ satisfies
$\mu_k=\Theta((1+k)^{-r})$; the standard choice
$\mu_k=\mu_0(1+k)^{-r}$ corresponds to $B=1$. }

We now give the complete statement of
Theorem~\ref{thm:complexity}, including the explicit estimates from
which the rates in the main text follow.

\medskip
\noindent\textbf{Theorem~\ref{thm:complexity}
	(complete statement).}
\begin{itshape}
	Suppose Assumption~\ref{assume:A} and ENNAMCQ hold, and suppose
	that the inverse-penalty grid satisfies~\eqref{eq:grid-ratio}. Let $\{\mu_k\}$ be positive and
	nonincreasing with \alert{$\mu_k=\Theta((1+k)^{-r})$} for some $r\in(0,1)$,
	and let $\{\zeta_k\}$ satisfy
	$\zeta_k\leq\bar\zeta\mu_k^q$ for some $\bar\zeta>0$ and $q>1/r$.
	Let $\{w^k\}$ be generated by Algorithm~\ref{alg:main} and, for each
	$k\geq0$, set
	$\widetilde\lambda_{k+1}
	\coloneqq
	\gamma_k^{-1}\phi_{\mu_k}'(\widetilde R^{k+1})$.
	Then, for every sufficiently large $K$, there exists
	$\widehat k\in[\lceil K/2\rceil,K]$ such that
	\begin{align}
		\Phi_\rho^{\rm s}
		(w^{\widehat k+1},\widetilde\lambda_{\widehat k+1})
		&=
		O\!\left(K^{-(1-r)/2}\right),
		\label{eq:main-rate-stationarity-app}\\
		\Phi_\rho^{\rm f}(w^{\widehat k+1})
		&=
		O(K^{-r}),
		\label{eq:main-rate-feasibility-app}\\
		\Phi_\rho^{\rm c}
		(w^{\widehat k+1},\widetilde\lambda_{\widehat k+1})
		&=
		O(K^{-r}).
		\label{eq:main-rate-complementarity-app}
	\end{align}
	Consequently, an
	$(\varepsilon_s,\varepsilon_f,\varepsilon_c)$-KKT point is
	obtained within
	\begin{equation}
			K
		=
		O\!\left(
		\max\left\{
		\varepsilon_s^{-2/(1-r)},
		\varepsilon_f^{-1/r},
		\varepsilon_c^{-1/r}
		\right\}
		\right)
		\label{eq:K-outer}
	\end{equation}
	outer iterations.
\end{itshape}

\begin{proof}
	We verify the conditions required to invoke Lemma~\ref{lem:nonupdate-kkt}. Since
	$\zeta_k\leq\bar\zeta\mu_k^q$ with $q>1/r>1$, we have
	$\zeta_k/\mu_k\leq\bar\zeta\mu_k^{q-1}\to0$.
	Hence $\zeta_k=o(\mu_k)$, and
	Lemma~\ref{lem:penalty-stabilization} yields
	$\gamma_k\geq\underline\gamma>0$ for all $k\geq0$ and finite
	stabilization of the penalty parameter.
	
Next, we verify the summability condition on $\{\tau_k\}$.
	From \eqref{eq:constants-for-error-bounds}, we have
	$
	E_k^0
	\leq
	c_D\mu_k^{q/2}+c_E\mu_k^{q-1},
	$ where 	$
	c_D\coloneqq
	L_{a,\rho}\sqrt{2\bar\zeta/m_\rho}
	$
	and
	$
	c_E\coloneqq b_\phi B_R\bar\zeta.
	$
	Moreover, since $\mu_{k+1}\leq\mu_k$,
	$\zeta_{k+1}\leq\bar\zeta\mu_{k+1}^q
	\leq\bar\zeta\mu_k^q$, and therefore the same bound holds for
	$E_k^+$:
	\begin{equation}
		E_k^0+E_k^+
		\leq
		2c_D\mu_k^{q/2}
		+
		2c_E\mu_k^{q-1}.
		\label{eq:E-poly}
	\end{equation}
	Using $(a+b)^2\leq2a^2+2b^2$, 
	$\zeta_{k+1}\leq\bar\zeta\mu_k^q$, and $\vartheta_k\geq\underline\vartheta$ by definition, we obtain
	\begin{align*}
		\tau_k = \tau_{k,L_k}
		&=
		\frac{\mu_k}{2\vartheta_k}(E_k^0)^2
		+\zeta_k+\zeta_{k+1}\\
		&\leq
		\frac{c_D^2}{\underline\vartheta}\mu_k^{q+1}
		+
		\frac{c_E^2}{\underline\vartheta}\mu_k^{2q-1}
		+
		2\bar\zeta\mu_k^q.
	\end{align*}
	Since $rq>1$, we also have
	$r(q+1)>1$ and $r(2q-1)=2rq-r>2-r>1$.
	Hence all three series on the above right-hand side are summable, and therefore
	$\Xi_\infty\coloneqq\sum_{k=0}^\infty\tau_k<\infty$.
	In particular, the constant $M_2$ in
	Lemma~\ref{lem:nonupdate-kkt} is finite.
	
	Let $m=\lceil K/2\rceil$. By
	Lemma~\ref{lem:polynomial-smoothing-estimates},
	$
	S_K=\Theta(K^{1-r}),
	$
	$
	\mu_m=\Theta(K^{-r}),
	$
	and
	$
	\widetilde S_K=\Theta(K^{1-r}).
	$
	Thus $\widetilde S_K>0$ for all sufficiently large $K$ and
	$\widetilde S_K^{-1}=O(K^{-(1-r)})\to0$.
	Hence~\eqref{eq:window-small-step-condition} holds for all
	sufficiently large $K$. At this point, for all sufficiently large $K$, the conditions of
	Lemma~\ref{lem:nonupdate-kkt} are satisfied.
	
	We derive the complexity of the right-hand sides of \eqref{eq:window-stationarity}--\eqref{eq:window-complementarity} in Lemma~\ref{lem:nonupdate-kkt}. For $m\leq j\leq K$,
	\eqref{eq:E-poly} and the monotonicity of $\mu_j$ give
	\[
	E_j^0+E_j^+
	\leq
	2c_D\mu_m^{q/2}
	+
	2c_E\mu_m^{q-1}.
	\]
	Using $\mu_m=\Theta(K^{-r})$ from
	\eqref{eq:SK-rate}, we obtain
	\begin{equation}
			\mathcal E_K
		=
		O\!\left(
		K^{-rq/2}+K^{-r(q-1)}
		\right).
		\label{eq:mathcalE_K-rate}
	\end{equation}
	Because $rq>1$, we have
	$
	rq/2>1/2>(1-r)/2
	$
	and
	$
	r(q-1)=rq-r>1-r>(1-r)/2.
	$
	Therefore, together with \eqref{eq:mathcalE_K-rate}, 
	\begin{equation}
		\mathcal E_K
		=
		o\!\left(K^{-(1-r)/2}\right).
		\label{eq:E-window-poly}
	\end{equation}
	
	Similarly, for $m\leq j\leq K$,
	$\zeta_{j+1}\leq\bar\zeta\mu_{j+1}^q
	\leq\bar\zeta\mu_m^q$, and thus
	$
	\mathcal Z_K
	\leq\bar\zeta\mu_m^q
	=
	O(K^{-rq}).
	$
	Since $q>1$,
	\begin{equation}
		\mathcal Z_K=o(K^{-r}).
		\label{eq:zeta-window-poly}
	\end{equation}
	
	Lemma~\ref{lem:nonupdate-kkt} therefore yields, for every
	sufficiently large $K$, an index
	$\widehat k\in[m,K]$ satisfying
	\eqref{eq:window-stationarity}--%
	\eqref{eq:window-complementarity}.
Since
$
\widetilde S_K^{-1/2}
=
O(K^{-(1-r)/2})
$ from~\eqref{eq:modified-SK-rate},
\eqref{eq:E-window-poly} shows that the right-hand side of
\eqref{eq:window-stationarity} is
\[
O(K^{-(1-r)/2})
+
o(K^{-(1-r)/2})
=
O(K^{-(1-r)/2}).
\]
Likewise,
$\mu_m=\Theta(K^{-r})$ and
\eqref{eq:zeta-window-poly} show that the right-hand sides of
\eqref{eq:window-feasibility} and
\eqref{eq:window-complementarity} are
\[
O(K^{-r})+o(K^{-r})=O(K^{-r}).
\]
This proves
\eqref{eq:main-rate-stationarity-app}--%
\eqref{eq:main-rate-complementarity-app}. Finally, solving the three rate bounds for $K$ gives
\eqref{eq:K-outer},
	which proves the claimed outer-iteration complexity.
\end{proof}
%

\begin{remark}[Lower-level interpretation]
	\label{rem:delta-stationarity}
	The feasibility estimate \eqref{eq:main-rate-feasibility-app} in Theorem~\ref{thm:complexity},
	together with Corollary~\ref{cor:kkt-lower-level-stationarity},
	shows that the returned iterate
	$w^{\widehat k+1}=(x^{\widehat k+1},y^{\widehat k+1})$
	satisfies
	$
	H_\rho(w^{\widehat k+1})
	\leq
	\delta+O(K^{-r}),
	$
	and consequently, its associated Moreau proximal point
	$z_\rho(w^{\widehat k+1})$ satisfies
	\[
	\|y^{\widehat k+1}-z_\rho(w^{\widehat k+1})\|
	=
	O\!\left(\sqrt{\delta+K^{-r}}\right),
	\]
	and
	\[
	\operatorname{dist}\!\left(
	0,\,
	\nabla_y g\!\left(
	x^{\widehat k+1},z_\rho(w^{\widehat k+1})
	\right)
	+
	N_Y\!\left(z_\rho(w^{\widehat k+1})\right)
	\right)
	=
	O\!\left(\sqrt{\delta+K^{-r}}\right).
	\]
	Thus, the returned $y$-iterate is close to a point that is
	near-stationary for the lower-level problem. In particular, with
	$r=1/3$ and $K=O(\varepsilon^{-3})$, both errors are
$O(\sqrt{\delta+\varepsilon})$.
\end{remark}

We next give the explicit oracle complexity obtained when the Moreau
subproblems are solved by Algorithm~\ref{alg:ME-evaluation}.
The argument is unchanged for any first-order inner method that computes
a $\zeta$-accurate Moreau evaluation in $O(\log(1/\zeta))$ iterations
uniformly over the proximal subproblems; only the problem-dependent
constant in the complexity bound changes.

\medskip
\noindent\textbf{Corollary~\ref{cor:overall-complexity}
	(Overall first-order complexity).}
\textit{Suppose the conditions of
	Theorem~\ref{thm:complexity} hold and, in addition, choose
	$\zeta_k=\Theta(\mu_k^q)$. Suppose
	Algorithm~\ref{alg:ME-evaluation} is used for each required
	Moreau evaluation. Let
	$B_{\rm ls}$ denote a uniform bound on the number of trial Moreau
	evaluations per outer iteration, with $B_{\rm ls}=1$ for Strategy I.
	Then the overall first-order/projection complexity of IVSP through
	outer iteration $K$ is
	\begin{equation}
		\label{eq:overall-complexity-app}
		O\!\left(
		B_{\rm ls}\kappa_{\psi,\rho}\,
		K\log K
		\right).
	\end{equation}
	In particular, for the balanced choice $r=1/3$, IVSP computes an
	$\varepsilon$-KKT point with overall first-order/projection complexity
	\begin{equation}
		\label{eq:balanced-overall-complexity-app}
		O\!\left(
		B_{\rm ls}\kappa_{\psi,\rho}\,
		\varepsilon^{-3}
		\log\frac{1}{\varepsilon}
		\right)
		=
		\widetilde O(\varepsilon^{-3}).
	\end{equation}
}

\begin{proof}
	By Lemma~\ref{lem:ME-residual-complexity}, each call to
	Algorithm~\ref{alg:ME-evaluation} with tolerance $\zeta$ requires
	$
	O\!\left(
	\kappa_{\psi,\rho}
	\log\frac{1}{\zeta}
	\right)
	$
	first-order/projection iterations for the Moreau subproblem. The
	implicit constant is uniform over the outer iterates and
	line-search trials.
	
	At outer iteration $k$, every new trial-point Moreau evaluation is
	required to be $\zeta_{k+1}$-accurate. Since $\zeta_k=\Theta(\mu_k^q)$ and
$\mu_k=\Theta((1+k)^{-r})$, we have
$
\zeta_{k+1}
=
\Theta((k+2)^{-rq}),
$
	and hence
	$
	\log(1/\zeta_{k+1})
	=
	O(\log(k+2)).
	$
	Thus, each new trial-point proximal solve at outer iteration $k$
	requires
	$
	O\!\left(
	\kappa_{\psi,\rho}\log(k+2)
	\right)
	$
	first-order/projection iterations.
	
	After the initial $\zeta_0$-accurate Moreau evaluation, Strategy I
	requires one new proximal solve per outer iteration. Under
	Strategy II, every line-search trial requires one new proximal
	solve, and Lemma~\ref{lem:descent} bounds the number of trials per
	outer iteration by $B_{\rm ls}$. The Moreau evaluation associated
	with an accepted trial is retained for the next outer iteration,
	so no additional current-point proximal solve is required.
	Therefore, the total first-order/projection work used by the
	Moreau subproblems through outer iteration $K$ is
	\begin{align*}
		&
		O\!\left(
		\kappa_{\psi,\rho}
		\log\frac{1}{\zeta_0}
		\right)
		+
		O\!\left(
		B_{\rm ls}\kappa_{\psi,\rho}
		\sum_{k=0}^{K-1}\log(k+2)
		\right)	=
		O\!\left(
		B_{\rm ls}\kappa_{\psi,\rho}\,
		K\log K
		\right).
	\end{align*}
	
	The outer method itself requires only a constant number of
	first-order evaluations per accepted iteration and at most
	$B_{\rm ls}$ projections onto $C$ per iteration. Its
	first-order/projection work is therefore $O(B_{\rm ls}K)$, which
	is dominated by \eqref{eq:overall-complexity-app}. This proves the
	first claim.
	
	For the balanced choice $r=1/3$,
	Theorem~\ref{thm:complexity} gives
	$K=O(\varepsilon^{-3})$. Substituting this into
	\eqref{eq:overall-complexity-app} gives
	\eqref{eq:balanced-overall-complexity-app}.
\end{proof}

\section{Details on the complexity comparison}
\label{app:complexity-comparison}

Table~\ref{tab:complexity-comparison} compares representative
deterministic first-order bilevel methods under a common norm-based
$\varepsilon$ convention whenever possible. Since the methods target
different reformulations and solution concepts, the reported rates
should be interpreted together with the ``Guarantee'' column rather
than as complexities for an identical stopping criterion.

\paragraph{Solution criteria.}
For methods assuming a unique lower-level solution,
\emph{hyperobjective stationarity} refers to
$\|\nabla\Phi(x)\|\leq\varepsilon$, where
$\Phi(x)=f(x,y^\star(x))$. \emph{Penalty stationarity} refers to
$\varepsilon$-stationarity of the penalized or smoothed surrogate used
by the corresponding method. A KKT guarantee controls stationarity,
constraint feasibility, and complementarity for the relevant
constrained reformulation. BOME's KKT$^\ast$ criterion is instead based
on the value-gap residual
\[
\min_{\lambda\geq0}
\|\nabla f+\lambda\nabla q\|^2+q,
\qquad q=g-g^\star,
\]
and is not the classical KKT residual. The weak-KKT notion of
Lu--Mei~\citep{lumei} combines stationarity of their minimax
reformulation with lower-level value-gap feasibility.

\paragraph{Rate conventions.}
When a method controls several residuals, the complexity displayed in
Table~\ref{tab:complexity-comparison} is the number of iterations
required for all quantities indicated in the ``Guarantee'' column to
be $O(\varepsilon)$. Rates originally stated using squared residuals
are converted to this norm-based convention. For V-PBGD~\citep{VPBGD}, the reported
$\widetilde O(\varepsilon^{-2})$ rate treats its prescribed
penalty coefficient as fixed. For SLM~\citep{SLM}, we
report the $O(\varepsilon^{-2})$ rate with its relaxation parameter
treated as fixed, consistent with the fixed-relaxation setting of
IVSP; the hidden constant depends polynomially on that relaxation
level. For MEHA~\citep{MEHA}, stationarity of a fixed penalized problem
admits the $\mathcal O(\nu\epsilon^{-2})$ rate reported in
\citet{lu2025tsp}, whereas requiring both
$\mathcal O(\epsilon)$ stationarity and
$\mathcal O(\epsilon)$ unrelaxed Moreau-gap feasibility in its
increasing-penalty analysis gives $\mathcal O(\epsilon^{-4})$.
PNGBiO~\citep{PNGBiO} obtains an
$\mathcal O(T^{-1/4})$ rate for both its stationarity measure and
unrelaxed Moreau-gap violation, yielding the same
$\mathcal O(\epsilon^{-4})$ joint criterion.

For IVSP, the relaxation level $\delta>0$ is prescribed and fixed.
The $\mathcal O(\epsilon^{-3})$ outer-iteration bound controls
stationarity, feasibility, and complementarity for the KKT system of
the relaxed constraint
$
g(x,y) - g_\rho(x,y) - \delta\leq 0.
$
Thus, the IVSP rate and the above MEHA/PNGBiO rates do not correspond
to an identical feasibility target: the latter drive the unrelaxed
Moreau-gap violation to $\mathcal O(\epsilon)$, whereas IVSP drives
$[g(x,y) - g_\rho(x,y)-\delta]_+$ to $\mathcal O(\epsilon)$ for fixed $\delta$.
Including the logarithmic work required by the inexact Moreau
subproblems gives
$\widetilde{\mathcal O}(\epsilon^{-3})$ first-order/projection
complexity for this fixed-relaxation KKT target. Similar to SLM \citep{SLM}, the constants in the IVSP bound are not claimed to be uniform as
$\delta\downarrow0$; see Appendix~\ref{app:moreau-relaxation}.

\paragraph{Per-iteration oracle costs.}
The complexity rates in Table~\ref{tab:complexity-comparison} should
not be interpreted as assuming identical per-iteration oracle costs.
In particular, the iterations of MEHA~\citep{MEHA} and
PNGBiO~\citep{PNGBiO} are formed from first-order information without
requiring lower-level objective-value evaluations. IVSP additionally
requires such values to estimate the Moreau-gap residual. Under
Strategy~I, each outer iteration requires one new inexact Moreau solve
and, after this solve, two evaluations of $g$ to form
$\widetilde R_{k+1}$. Thus, although its lower-level
first-order/projection complexity is
$O(\varepsilon^{-3}\log(1/\varepsilon))$, it also incurs
$O(\varepsilon^{-3})$ lower-level value evaluations.

Under Strategy~II, if $B_k$ backtracking trials are performed at
iteration $k$, each trial requires a new inexact Moreau solve, two
lower-level value evaluations to form the trial residual, and an
upper-level value evaluation for the Armijo test. Hence these
trial-point costs scale by $B_k\leq B_{\rm ls}$, whereas
current-point first-order quantities are computed only once or
retained from the preceding accepted iteration. Consequently, the
corresponding value-oracle costs are
$O(B_{\rm ls}\varepsilon^{-3})$, while Corollary~\ref{cor:overall-complexity}
accounts separately for the
$O(B_{\rm ls}\varepsilon^{-3}\log(1/\varepsilon))$
lower-level first-order/projection work.

\section{Experimental Details}
\label{app:experimental-details}

This section provides the complete problem formulations and
implementation details for the numerical experiments in
Section~\ref{sec:experiments}.

We conducted all experiments on an Ubuntu 20.04 workstation equipped
with an AMD EPYC 7413 24-Core Processor, 512 GB of system memory,
and an NVIDIA RTX A5000 GPU with 24 GB of GPU memory.
We used Python 3.10.12 and PyTorch 2.6.0 with CUDA 12.4.

All optimization runs are deterministic after fixing the problem instances,
data splits, and initializations. In the real-world experiments, we use fixed
training, validation, and test sets for all methods, and no stochastic
mini-batching is used during optimization. The test set is used only for
evaluation.
All compared methods are run under the same wall-clock budget within
each problem: 1 second for the synthetic experiment, 60 seconds for both
MNIST and FashionMNIST hyper-cleaning, and
600 seconds for Omniglot few-shot learning.
Wall-clock time includes all optimization computations, including
inner-loop computations, but excludes data loading and offline metric
evaluation.
The main hyperparameters of all compared methods are tuned separately
for each problem using method-specific candidate values.
For the real-world tasks, we use the same validation-based tuning
protocol and comparable tuning budgets for all methods within each
problem. 
Test performance is not used for hyperparameter
selection.

\paragraph{Implementation details.}
We make two practical implementation choices for
Algorithm~\ref{alg:inexact-penalty-smoothing}: a finite-precision
relaxation of the inexact Armijo test and a safeguarded
Barzilai--Borwein initialization of the backtracking parameter.

First, in finite-precision arithmetic, we use the following numerically
relaxed version of the inexact Armijo condition~\eqref{eq:robust-armijo}:
\begin{align}
    &\gamma_k f(w_L^k)
    +\phi_{\mu_k}
    \bigl(\widetilde R_{k,L}^+ + \zeta_{k+1}\bigr)\\
    &\qquad \leq
    \gamma_k f(w^k)
    +\phi_{\mu_k}(\widetilde R^k)
    -\frac{c_1}{2\mu_k}\|w_L^k-w^k\|^2
    +\tau_{k,L}
    +10^{-12}.
    \label{eq:implemented-robust-armijo}
\end{align}
The additive tolerance $10^{-12}$ is used only to prevent numerical
stalling due to floating-point roundoff when the two sides of
\eqref{eq:robust-armijo} are nearly identical, and is not part of the
theoretical algorithm or convergence analysis.

Second, for the backtracking variant of IVSP, we initialize the
line-search parameter using a safeguarded Barzilai--Borwein (BB)
estimate~\citep{barzilai1988two}, similarly to the implementation of
\citet{xu2026smoothing}.
Specifically, define
\[
\Delta w_k:=w^k-w^{k-1},
\qquad
\Delta v_k
:=
\gamma_k
\bigl(
\nabla f(w^k)-\nabla f(w^{k-1})
\bigr)
+
\widetilde{\nabla}G_k
-
\widetilde{\nabla}G_{k-1},
\qquad k\geq1.
\]
We form
\begin{equation}
\label{eq:bb-initialization}
L_k^{\mathrm{BB}}
=
\mu_k
\frac{\|\Delta v_k\|^2}
     {|\langle \Delta w_k,\Delta v_k\rangle|}.
\end{equation}
We set $L_{0,0}=L_{\mathrm{init}}$. For $k\geq1$, if
$|\langle \Delta w_k,\Delta v_k\rangle|>10^{-24}$ and
$L_k^{\mathrm{BB}}\in[L_{\min},L_{\max}]$, we set
$L_{k,0}=L_k^{\mathrm{BB}}$; otherwise,
$L_{k,0}=\min\left\{L_{\max},\max\left\{L_{\min},L_{k-1,0}/2\right\}\right\}.$
Starting from $L=L_{k,0}$, we repeatedly replace $L$ by $2L$
until \eqref{eq:implemented-robust-armijo} is satisfied.

\paragraph{Evaluation of the Moreau-gap violation.}
For reporting the estimated Moreau-gap constraint violation, we
compute an independent approximation $\widehat z$ of the proximal
point $z_\rho(x,y)$ by applying projected gradient with monotone
backtracking to the full lower-level Moreau subproblem, initialized
at $z=y$. For each task, we use the same proximal parameter $\rho$ as
IVSP and apply it to all compared methods. The proximal solve is
terminated when the norm of the projected-gradient mapping is at most
$10^{-3}$. Using the resulting $\widehat z$, we compute
$
\widehat H_\rho(x,y)
=
g(x,y)-g(x,\widehat z)
-\frac{1}{2\rho}\|\widehat z-y\|^2
$
and report
$
\widehat\Phi_\rho^{\rm f}(x,y)
:=
[\widehat H_\rho(x,y)-\delta]_+,
$
which estimates the feasibility residual $\Phi_\rho^{\rm f}$ in
Definition~\ref{def:approx-KKT}. This independent proximal solve is
used only for evaluation.

\paragraph{Common IVSP settings.}
For IVSP, following \citet{xu2026smoothing}, we use the centered-Huber
smoothing of the plus function,
\begin{equation}
\label{eq:centered-huber}
\phi_\mu(t)
=
\begin{cases}
0, & t\leq-\mu/2,\\[1mm]
\dfrac{(t+\mu/2)^2}{2\mu}, & |t|<\mu/2,\\[2mm]
t, & t\geq\mu/2.
\end{cases}
\end{equation}
This is the optimal inner $1/\mu$-smoothing of $[\cdot]_+$ in the
sense of \citet{samakhoana2026optimal}, with uniform approximation
error $\mu/8$. Accordingly, the constants in
\eqref{eq:a-phi} and \eqref{eq:phi-derivative-lip} are
$a_\phi=1/8$ and $b_\phi=1$, respectively.

Across all experiments, we use
$\mu_k=\mu_0(\lfloor k/B\rfloor+1)^{-r}$,
$\widehat{\gamma}_t=\gamma_0(t+1)^{-1/2}$, and
$\zeta_k=\bar{\zeta}\mu_k^q$,
with $r=1/3$ and $q=1/r+0.01$.
The common parameter settings are
$L_{\min}=10^{-11}$, $L_{\max}=10^{11}$,
$\delta=10^{-3}$, $c_1=10^{-4}$, $c_2=1$,
$\underline{\vartheta}=10^{-24}$,
$\hat{\vartheta}=1.99$, and $B_R=10^3$.

\paragraph{Synthetic nonconvex problem.}
For the synthetic experiment, we use the nonconvex bilevel problem
described in Section~\ref{sec:synthetic-nonconvex}, with
$n=1000$, $a=2$, $c_i=2$, $x^0=-6$, and $y^0=0$.

For IVSP, we additionally use
$\sigma=1$, $L_{\rm init}=1$, $L_{xy}=\sqrt{1000}$, and $L_{yy}=1$.
The main method-specific hyperparameters are summarized in
Table~\ref{tab:synthetic-hyperparameters}.

\begin{table}[t]
\centering
\caption{Main hyperparameter settings for the synthetic nonconvex experiment.}
\label{tab:synthetic-hyperparameters}
\small
\setlength{\tabcolsep}{4pt}
\begin{tabular}{lp{0.76\linewidth}}
\toprule
Method & Hyperparameters \\
\midrule

IVSP &
$\mu_0=9\times10^{-3}$,
$B=1$,
$\gamma_0=0.9$,
$\bar{\zeta}=3$,
$\rho=0.5$.
\\[1mm]

PNGBiO &
$\alpha_k=0.70(k+1)^{-1/2}$,
$\beta_k=0.20(k+1)^{-0.05}$,
$\eta_k=0.015$,
$\gamma=0.5$,
$c_k=0.004(k+1)^{0.21}$.
\\[1mm]

MEHA &
$\alpha_k=0.004$,
$\beta_k=0.02$,
$\eta_k=0.05$,
$\gamma=0.5$,
$c_k=0.1(k+1)^{0.28}$.
\\[1mm]

SLM &
$\eta=1.5\times10^{-3}$,
$\alpha=4.0\times10^{-3}$,
$\gamma=0.3$,
$\tau=10^{-8}$,
$T_r=10$.
\\[1mm]

F$^2$SA &
$\alpha_k=0.01$,
$\gamma_k=0.1$,
$\lambda_0=0.0195$,
$T=1$.
\\[1mm]

V-PBGD &
$\alpha=0.003$,
$\beta=0.05$,
$\gamma_k=0.0025(k+1)^{0.6}$,
$T=1$.
\\[1mm]

BOME &
$\xi=0.05$,
$\alpha=0.05$,
$\eta=0.01$,
$T=1$.
\\

\bottomrule
\end{tabular}
\end{table}

\paragraph{Data hyper-cleaning.}
For data hyper-cleaning~\citep{ren2018learning, MEHA, PNGBiO}, let
$\mathcal D_{\rm tr}=\{(u_i,\widetilde q_i)\}_{i=1}^{n}$
denote the corrupted training set and let
$\mathcal D_{\rm val}$ denote the clean validation set.
The UL variable $x\in[-100,100]^n$ assigns the weight
$\omega(x_i)$ to training example $i$, where
$\omega(t)=1/(1+e^{-t})$ is the sigmoid function.

The bilevel formulation is
\begin{equation}
\label{eq:app-hypercleaning}
\begin{aligned}
    &\min_{\substack{x\in[-100,100]^{5000}\\ \, y\in[-100,100]^{25450}}}
    \quad
    \frac{1}{|\mathcal D_{\rm val}|}
    \sum_{(u,q)\in\mathcal D_{\rm val}}
    \ell(h_y(u),q)
    \\
    \text{s.t.}\quad
    y\in
    &\argmin_{\widetilde y\in[-100,100]^{25450}}
    \frac{1}{|\mathcal D_{\rm tr}|}
    \sum_{i=1}^{n}
    \omega(x_i)
    \ell(h_{\widetilde y}(u_i),\widetilde q_i).
\end{aligned}
\end{equation}

The classifier $h_y$ is a two-layer fully connected neural network
with 784 input units, 32 sigmoid hidden units, and 10 output units.
It therefore contains
$784\times32+32+32\times10+10=25{,}450$ parameters.
The sigmoid activation makes the LL objective smooth, while the
neural-network parameterization makes it nonconvex. This problem is an instance of the sigmoid-reweighted classification
model in Corollary~\ref{cor:ennamcq-hypercleaning}.
Indeed, with $M=B_y=100$, $d_h=32$, and $m=10$,
\[
\frac{2B_y(d_h+1)+\log m}{1+e^M}
< 2.5\times10^{-40}
<10^{-3}=\delta.
\]
Hence, ENNAMCQ holds for both hyper-cleaning experiments.

For both MNIST~\citep{MNIST} and FashionMNIST~\citep{FashionMNIST}, we use 5,000 training, 5,000 validation, and 10,000 test examples.
We randomly corrupt 50\% of the training labels by replacing each
selected label with an incorrect class label, while the validation and test sets remain clean.
The same corrupted training set is used for all compared methods.
The variables are initialized with $x^0=0$ and
$y^0=0.01\,\varepsilon$, where $\varepsilon$ has independent
standard normal entries.
We report validation loss, test error, and the Moreau-gap constraint violation as functions of wall-clock time.

We additionally use
$L_{\rm init}=1$, $L_{xy}=1$, $\sigma=1$ for both hyper-cleaning tasks,
with $L_{yy}=1$ on MNIST and $L_{yy}=20$ on FashionMNIST.
The main method-specific hyperparameters are summarized in
Table~\ref{tab:hypercleaning-settings}.

\begin{table}[t]
\centering
\caption{Main hyperparameter settings for data hyper-cleaning.}
\label{tab:hypercleaning-settings}
\scriptsize
\setlength{\tabcolsep}{3pt}
\begin{tabular}{lp{0.39\linewidth}p{0.39\linewidth}}
\toprule
Method & MNIST & FashionMNIST \\
\midrule

IVSP &
$\mu_0=0.02$,
$B=20$,
$\gamma_0=10$,
$\bar{\zeta}=9$,
$\rho=0.05$
&
$\mu_0=10$,
$B=2$,
$\gamma_0=1$,
$\bar{\zeta}=5\times10^{-3}$,
$\rho=0.1$
\\[1mm]

PNGBiO &
$\alpha_k=\beta_k=(k+1)^{-1/2}$,
$\eta_k=0.05$,
$\gamma=0.1$,
$c_k=0.2(k+1)^{0.25}$
&
$\alpha_k=\beta_k=(k+1)^{-1/2}$,
$\eta_k=0.05$,
$\gamma=0.4$,
$c_k=0.2(k+1)^{0.25}$
\\[1mm]

MEHA &
$\alpha_k=9$,
$\beta_k=(k+1)^{-0.6}$,
$\eta_k=0.05$,
$\gamma=5$,
$c_k=0.2(k+1)^{0.005}$
&
$\alpha_k=10$,
$\beta_k=(k+1)^{-0.6}$,
$\eta_k=0.03$,
$\gamma=12$,
$c_k=0.1(k+1)^{0.005}$
\\[1mm]

SLM &
$\eta=2$,
$\alpha=2$,
$\gamma=0.2$,
$\tau=0.01$,
$T_r=1$
&
$\eta=2$,
$\alpha=2$,
$\gamma=0.2$,
$\tau=0.01$,
$T_r=1$
\\[1mm]

F$^2$SA &
$\alpha_k=3(k+1)^{-1/3}$,
$\gamma_k=0.2$,
$\lambda_0=0.1$,
$T=10$
&
$\alpha_k=4(k+1)^{-1/3}$,
$\gamma_k=0.2$,
$\lambda_0=0.1$,
$T=10$
\\[1mm]

V-PBGD &
$\alpha=2$,
$\beta=1$,
$\gamma=1$,
$T=1$
&
$\alpha=1$,
$\beta=1$,
$\gamma=1$,
$T=1$
\\[1mm]

BOME &
$\xi=3$,
$\alpha=1$,
$\eta=0.5$,
$T=1$
&
$\xi=2$,
$\alpha=1$,
$\eta=0.5$,
$T=1$
\\

\bottomrule
\end{tabular}
\end{table}

\paragraph{Few-shot learning.}

\begin{table}[t]
\centering
\caption{Main hyperparameter settings for few-shot learning on Omniglot.}
\label{tab:fewshot-settings}
\small
\setlength{\tabcolsep}{4pt}
\begin{tabular}{lp{0.76\linewidth}}
\toprule
Method & Hyperparameters \\
\midrule

IVSP &
$\mu_0=1.5$,
$B=20$,
$\gamma_0=0.035$,
$\bar{\zeta}=1$,
$\rho=0.1$.
\\[1mm]

PNGBiO &
$\alpha_k=0.35(k+1)^{-1/2}$,
$\beta_k=0.05(k+1)^{-1/2}$,
$\eta_k=10^{-3}$,
$\gamma=100$,
$c_k=0.067(k+1)^{0.08}$.
\\[1mm]

MEHA &
$\alpha_k=0.08(k+1)^{-1/2}$,
$\beta_k=0.05(k+1)^{-1/2}$,
$\eta_k=10^{-3}$,
$\gamma=100$,
$c_k=0.008(k+1)^{0.08}$.
\\[1mm]

SLM &
$\eta=55$,
$\alpha=10$,
$\gamma=0.01$,
$\tau=0.01$,
$T_r=10$.
\\[1mm]

F$^2$SA &
$\alpha_k=5(k+1)^{-1/3}$,
$\gamma_k=0.5$,
$\xi=0.15$,
$\lambda_0=1$,
$T=5$.
\\[1mm]

V-PBGD &
$\alpha=1$,
$\beta=0.01$,
$\gamma=10$,
$T=10$.
\\[1mm]

BOME &
$\xi=3$,
$\alpha=1$,
$\eta=0.01$,
$T=10$.
\\

\bottomrule
\end{tabular}
\end{table}

We consider episodic few-shot classification on Omniglot
\citep{Omniglot}, following the bilevel meta-learning formulation used
in \citet{MEHA}. The 1,623 character classes are randomly partitioned
into 1,100 training, 100 validation, and 423 test classes using a fixed
class split. The three class sets are mutually disjoint. We consider the
10-way 1-shot setting with five query examples per class. We generate
128 fixed training episodes, 100 validation episodes, and 100 test
episodes.

For each training episode $i\in\{1,\ldots,T\}$, where $T=128$, let
$\mathcal S_i$ and $\mathcal Q_i$ denote its support and query sets,
respectively. Each support set contains one example from each of 10
classes, and each query set contains five examples from each class.

The UL variable $x$ parameterizes a shared four-block convolutional
feature extractor. Each block consists of a $3\times3$ convolution with
64 output channels, ReLU activation, $2\times2$ max pooling, and batch
normalization. For Omniglot, the successive spatial resolutions are
$28\to14\to7\to3\to1$, resulting in a 64-dimensional feature vector.
Batch-normalization statistics are computed separately for each task
from the image set being processed.

For an episode dataset
$\mathcal D=\{(u_j,q_j)\}_{j=1}^{|\mathcal D|}$, let
$F_{x,\mathcal D}(u_j)\in\mathbb R^{64}$ denote the feature vector of
$u_j$ produced by the shared feature extractor, with the
batch-normalization statistics computed from $\mathcal D$.
For each training episode $i$, the LL variable is a linear softmax
classifier
$y_i=(W_i,b_i)$, where
$W_i\in\mathbb R^{10\times64}$ and $b_i\in\mathbb R^{10}$.
We define
\[
\mathcal L_{\mathcal D}(x,y_i)
=
\frac{1}{|\mathcal D|}
\sum_{j=1}^{|\mathcal D|}
\ell\!\left(
W_iF_{x,\mathcal D}(u_j)+b_i,q_j
\right),
\]
where $\ell$ denotes the 10-class cross-entropy loss. The corresponding
bilevel problem is
\begin{equation}
\label{eq:app-fewshot}
\begin{aligned}
    \min_{\substack{x\in[-5,5]^{111936}\\
                    y\in[-20,20]^{83200}}}
    \quad&
    \frac{1}{T}
    \sum_{i=1}^{T}
    \mathcal L_{\mathcal Q_i}(x,y_i)
    \\
    \text{s.t.}\quad
    y\in\argmin_{\widetilde y\in[-20,20]^{83200}}
    \quad&
    \frac{1}{T}
    \sum_{i=1}^{T}
    \mathcal L_{\mathcal S_i}(x,\widetilde y_i).
\end{aligned}
\end{equation}

For fixed $x$, the LL objective in~\eqref{eq:app-fewshot} is convex in
$y$, since each task-specific model is a linear softmax classifier.
It is not strongly convex: in particular, the softmax cross-entropy is
invariant under a common shift of all class logits, which induces
nontrivial flat directions in the LL objective. As established in
Proposition~\ref{prop:ennamcq-fewshot}, this structure nevertheless
satisfies ENNAMCQ for every $\delta>0$. Thus, this experiment provides
a large-scale convex but non-strongly-convex LL instance for which the
constraint qualification required by our analysis holds.

For each validation and test episode, we initialize a fresh linear
classifier at zero and adapt it for 10 gradient steps on the support
set with stepsize $0.1$, while keeping the feature extractor fixed.
We then evaluate query loss and classification error using the
adapted classifier.

All training episodes are fixed before optimization and are shared
across all compared methods. The optimization is therefore deterministic
after fixing the dataset, episode construction, and initialization.
The convolutional parameters are initialized according to the standard
PyTorch convolutional initialization, the batch-normalization scale and
bias are initialized to one and zero, respectively, and all task-specific
linear classifiers are initialized at zero. We use data seed 42,
initialization seed 43, and class-split seed 0.

For IVSP, we additionally use
$\sigma=0$, $L_{\rm init}=0.1$, $L_{xy}=1$, and $L_{yy}=15$. The main method-specific hyperparameters are summarized in Table~\ref{tab:fewshot-settings}.

\ifarxiv
\end{appendices}
\setlength{\bibsep}{1pt} 	
\bibliographystyle{plainnat}
\bibliography{ivsp_bibfile}
\else
\fi
\end{document}